\newtheorem{definition}{Definition}[section]
\newtheorem{lemma}[definition]{Lemma}
\newtheorem{theorem}[definition]{Theorem}
\newtheorem{proposition}[definition]{Proposition}
\newtheorem{corollary}[definition]{Corollary}
\newtheorem{remark}[definition]{Remark}
\newtheorem{example}[definition]{Example}
\numberwithin{equation}{section}
\def\e{\varepsilon}
\def\R{\mathbb R}
\def\N{\mathbb N}
\def\AU{\mathcal U}
\def\FF{\mathcal F}
\def\dx{\,dx}
\def\dy{\,dy}
\def\wto{\rightharpoonup}
\def\MM{\mathbb M^{m\times d}}
\newcommand\blfootnote[1]{%
  \begingroup
  \renewcommand\thefootnote{}\footnote{#1}%
  \addtocounter{footnote}{-1}%
  \endgroup
}
\title{A closure theorem for $\Gamma$-convergence and H-convergence\\
with applications to non-periodic homogenization}
\author[1]{Andrea Braides} 
\author[1]{Gianni Dal Maso}
\author[2]{Claude Le Bris}
\affil[1]{\small SISSA, via Bonomea 265, Trieste, Italy {\tt \{abraides,dalmaso\}@sissa.it}}
\affil[2]{\small \'Ecole Nationale des Ponts et Chauss\'ees and INRIA, MATHERIALS project-team,
77455~Marne-La-Vall\'ee Cedex 2, France,
{\tt claude.le-bris@enpc.fr}}
\date{}
\begin{document} 

\maketitle

\begin{abstract} In this work we examine the stability of some classes of integrals, in particular with respect to homogenization. The prototypical case is the homogenization of quadratic energies with periodic coefficients perturbed by a term vanishing at infinity, which has been recently examined in the framework of elliptic PDE. We use localization techniques  and higher-integrability Meyers-type results to provide a closure theorem by $\Gamma$-convergence within a large class of integral functionals. From such result we derive stability theorems in homogenization which comprise the case of perturbations with zero average on the whole space. The results are also extended to the stochastic case, and specialized to the $G$-convergence of operators corresponding to quadratic forms. A corresponding analysis is also carried on for non-symmetric operators using the localization properties of $H$-convergence. Finally, we treat the case of perforated domains with Neumann boundary condition, and their stability.

{\bf MSC codes:} 49J45, 35B27, 35B20, 35B40, 35B35

{\bf Keywords:} Homogenization, stability, perturbation, $\Gamma$-convergence, $H$-convergence

\end{abstract}

\blfootnote{Preprint SISSA  03/2024/MATE}

 \def\OO{\Omega}
 \def\DD{D}

\section{Introduction} In this work we examine the stability of some classes of integrals, and in particular their homogenization. The prototypical case is that of quadratic energies
\begin{equation}\label{intro-1}
\int_\DD \Bigl(a_0\Bigl({x\over\e}\Bigr)+ \widetilde a\Bigl({x\over\e}\Bigr)\Big)|\nabla u|^2\dx,
\end{equation}
with bounded coefficients $a_0,\widetilde a$ such that $a_0$ and $a=a_0+\widetilde a$ are strictly positive, on a bounded subset $\DD$ in $\mathbb R^d$.
Typically $a_0=a_{\rm per}$ is a periodic coefficient, and $\widetilde a$ is considered as a perturbation,
 for which we ask the question, whether we give general conditions on $\widetilde a$ ensuring that such a perturbation is negligible in the problems of homogenization; that is, when $\e$ tends to $0$.
  This issue has been recently widely studied in the framework of the corresponding elliptic operators (see the discussion in Section \ref{PDE}). 
  
 Here, we state the problem as a general stability question for the $\Gamma$-convergence of integral functionals, of which quadratic energies are a particular case. 
In particular, for \eqref{intro-1} we show that a possible condition for $\widetilde a$ to be negligible is that $|\widetilde a|$ has zero mean in $\mathbb R^d$; that is,  
\begin{equation}\label{intro-2}
\langle |\widetilde a|\rangle_{{\mathbb R}^d}:=\lim_{R\to+\infty} {1\over R^d} \int_{B_R}|\widetilde a(y)|\,dy=0. 
\end{equation}
This condition is not necessary (this is evident from the analysis of the case $d=1$, from which we also produce counterexamples in higher dimension), but it is handy in dealing with a number of situations where the periodicity of the functional is perturbed. 

The $\Gamma$-convergence framework gives us the possibility of extending some results in the existing literature for quadratic forms, even though in a weaker form since it is not coupled with a corrector result. These extensions include for example, the treatment of energies for non-linear problems, perforated domains, and random coefficients.

\smallskip
The results we prove are based on the classical localization techniques of $\Gamma$-conver\-gence for integrals 
 of the form 
$$
\int_D f(x,\nabla u)\dx \qquad \hbox{for $u\in W^{1,p}(D;\mathbb R^m)$,} 
$$
with $D$ an open subset of $\mathbb R^d$ and $f=f(x,\xi)\colon D\times \MM\to[0,+\infty)$, where $\MM$ denotes the space of $m\times d$ matrices.
On the one hand, this allows us to obtain compactness and integral-representation results for equicoercive classes of integral functionals of $p$-growth defined on Sobolev spaces (see Theorem \ref{comp}) and on the other hand it allows us to describe the integrands of such functionals in terms of minimum problems on small balls (see Theorems \ref{derit} and \ref{derivation}). These localization techniques, together with Meyers-type higher-integrability estimates, are used to obtain our main result (Theorem \ref{main}): a closure theorem giving a sufficient condition for sequences of functionals to be close in the sense of $\Gamma$-con\-vergence.  In the particular case of two families of integrands $f_k$ and $g_k$  (Corollary \ref{Corgen}), this result asserts that the condition
\begin{equation}\label{intro-001}\nonumber
\lim_{{\rho\to0}}\limsup_{k\to+\infty}{1\over\rho^d}\int_{B_\rho(x)}\sup_{|\xi|\le t} |f_k(y,\xi)-g_k(y,\xi)|\dy=0
\end{equation}
for all $t>0$ and almost all $x\in\DD$ provides a {\em stability property}; namely that, up to subsequences
 \begin{equation}\label{intro-002-bis}\nonumber
 \Gamma\hbox{-}\lim_{k\to +\infty}\int_\DD f_k(x,\nabla u)\dx= 
 \Gamma\hbox{-}\lim_{k\to +\infty}\int_\DD g_k(x,\nabla u)\dx.
 \end{equation}
 In the case of homogenization problems, this condition is further specialized, and reads 
 \begin{equation}\label{intro-002}
 \lim_{R\to+\infty}{1\over R^d}\int_{B_{R}}\sup_{|\xi|\le t} |f(y,\xi)- g(y,\xi)|\dy=0
 \end{equation}
for all $t>0$, in this case ensuring that
 \begin{equation}\label{intro-003}\nonumber
 \Gamma\hbox{-}\lim_{\e\to 0}\int_\DD f\Big(\frac x\e,\nabla u\Big)\dx= 
 \Gamma\hbox{-}\lim_{\e\to 0}\int_\DD g\Big(\frac x\e,\nabla u\Big)\dx
 \end{equation}
 if either of the two limits exists. 
 
 Those stability results extend to the stochastic case, for which condition \eqref{intro-002} is now expressed as
 a hypothesis involving expectations, of the form
 \begin{equation}\label{intro-004}\nonumber
\lim_{R\to+\infty}{1\over R^d}\int_{\OO}
 \bigg(\int_{B_{R}}\sup_{|\xi|\le t} |f(\omega,y,\xi)- g(\omega, y,\xi)|\dy\bigg)\,d P(\omega)
=0
\end{equation}
 for all $t>0$, where $({\OO},{\mathcal T},P)$ denotes the probability space in which the stochastic problem is formulated. In this case the almost sure homogenizability of $f$, which is usually obtained by statistical invariance and ergodic assumptions, implies the  homogenizability of $g$ (see Section \ref{stock}).
 
When we restrict to quadratic functionals of the form
\[
\int_\DD \langle A(x)\nabla u,\nabla u\rangle\dx;
\]
that is, $f(x,\xi)=\langle A(x)\xi,\xi\rangle$, $\Gamma$-convergence results can be translated into the corresponding $G$-convergence results regarding the behaviour of elliptic problems of the form
  \begin{equation}\label{intro-006}
\begin{cases} -{\rm div}(A_k\nabla u_k)=\phi \\
u_k\in H^1_0(\DD).
\end{cases}
\end{equation}
 In this case, the condition
 \begin{equation}\label{intro-007}\nonumber
\lim_{{\rho\to0}}\limsup_{k\to+\infty}{1\over\rho^d}\int_{B_\rho(x)} |A_k(y)-B_k(y)|\dy=0
\end{equation}
for almost every $x$
ensures that the $G$-limits of the operators corresponding to $A_k$ and $B_k$ are the same. In the case of homogenization, where $A_k(x)=A(x/\e_k)$
and $B_k(x)=B(x/\e_k)$, a condition providing stability is
\begin{equation}\label{intro-008} \nonumber
\limsup_{R\to+\infty}{1\over R^d}\int_{B_{R}} |A(x)- B(x)|\dx
=0,
 \end{equation}
which reduces to \eqref{intro-2} in the case of isotropic matrices as in \eqref{intro-1}. 
Similar conditions and results can be stated also in the stochastic case. 

We can also address the case of $H$-convergence; that is, convergence of solutions to problems of the form \eqref{intro-006} for non-symmetric $A_k$. To this end, we adapt the localization techniques introduced for minimizers (Section \ref{closure-sec}) to the case of solutions of elliptic equations.

The final part of the article is dedicated to problems in perforated domains with Neumann boundary conditions, a question related to the behaviour as $\e\to0$ of functionals of the form
\begin{equation}\label{intro-010}\nonumber
F^E_\e(u)= \int_{\mathbb R^d\setminus \e E} |\nabla u|^2\dx.
\end{equation}
The analysis there is focussed on the perturbations of the perforation set $E$ that ensure stability of the corresponding $\Gamma$-limits.

\section{Comparison with the PDE approach}\label{PDE}
We comment here upon the relation of the present work with some existing results in the literature of the theory of partial differential equations. In the context of elliptic equations and systems (first in divergence form and next for some other cases), homogenization theory in the presence of perturbations in an otherwise periodic structure was developed in the work~\cite{MR3000490} by X.~Blanc, P.-L.~Lions and the third author, and  in the subsequent works~\cite{MR3421758,MR3909031,MR4060222,MR3926042}. In this series of works, which all follow a PDE perspective, some necessary assumptions are limiting the applicability of the results. These assumptions essentially are

\smallskip
(1) only  \emph{localized}  perturbations are considered; that is, perturbation coefficients~$\tilde a$ in \eqref{intro-1} that in some sense vanish at infinity (this will be made precise below);

\smallskip
(2) the unperturbed coefficient $a_0$  in \eqref{intro-1} is assumed periodic (this is related to the technique of proof, which actually relies on earlier results by M.~Avellaneda and F.-H.~Lin in the seminal work~\cite{MR0910954}, complemented by~\cite{MR0978702,MR1010728,MR1127038}, all precisely on the periodic case);

\smallskip (3) for reasons again related to the very technique of proof the (uniform in space) H\"older continuity of both the unperturbed coefficient and the perturbation coefficient (thus of their sum) needs to be assumed. 

The two exceptions to (3) are, on the one hand, the Hilbertian setting, when it is assumed that~$\tilde a\in L^2({\mathbb R}^d)$ and for which the H\"older continuity of the coefficients is unnecessary because everything is taken care of by ellipticity, and, on the other hand, the case of \emph{equations} instead of \emph{systems}, for which the Nash-Moser regularity theory allows to establish some results again without assuming the H\"older continuity.

\smallskip

In order to formalize (1); that is, the locality of the perturbation, the natural assumption on the  perturbation coefficients $\tilde a$ would be, besides the above assumptions (2)-(3), that $\tilde a$ only converges to zero at infinity; that is, $\displaystyle \tilde a(x)\to 0$ as $|x|\to+\infty$. This ``natural" setting is however not tractable mathematically.
\textit{A fortiori},  the (weaker) assumption~\eqref{intro-2}, although very natural, in particular because it mimics what happen for averages of functions, cannot be handled as such.
Hence,   the ``locality" was encoded  in imposing the stronger assumption
$$\tilde a\in L^p({\mathbb R}^d), \quad \textrm{for some}\quad 1\leq p< +\infty.$$
Using H\"older's inequality, this assumption clearly implies condition~\eqref{intro-2} on the vanishing average.

Given these restricting assumptions, the works~\cite{MR3000490,MR3421758,MR3909031,MR4060222} 

(i) establish that the homogenized coefficient is left unchanged by the perturbation,

(ii) prove the existence of a suitable corrector function, adapted to the perturbation, 

(iii) identify a specific rate of convergence for the residual (that is, $u^\varepsilon$ minus the first two terms of the two-scale expansion), which  may be made precise depending upon the functional space  (Sobolev spaces and H\"older spaces) considered for measuring the residual, the ambient dimension $d$,  and the exponent $p$ of the Lebesgue space in the condition $\tilde a\in L^p({\mathbb R}^d)$ on the perturbation coefficient.

\smallskip

So, as compared to the results using the $\Gamma$-limit approach and developed in the present article, the works~\cite{MR3000490,MR3421758,MR3909031,MR4060222}  use much stronger assumptions, but prove more on the homogenization limit. They also extend to some equations and systems that are not in divergence form and/or are not self-adjoint, as the work~\cite{MR3926042} shows. That said, as opposed to the approach developed in the present work, the techniques of the works~\cite{MR3000490,MR3421758,MR3909031,MR4060222} do not seem to allow one to establish (i) under the only assumption~\eqref{intro-2}.
\smallskip

In a related work~\cite{MR4459619} by R.~Goudey,  a variant of the above ``local" regime was explored. 
The idea therein is easily illustrated by the particular case when the perturbation~$\tilde a$ is in fact supported in the vicinity of the set of points in~${\mathbb R}^d$ that have coordinates along the canonical axes that all are integer powers of, say, 2. Intuitively, such points become more and more rare at infinity but never fully disappear. 
Such a perturbation ``rare at infinity'' also satisfies condition \eqref{intro-2} and so, formally, the setting of~\cite{MR4459619} is a particular instance of that considered in the present work.

\smallskip
A motivation for the works~\cite{MR3000490,MR3421758,MR3909031,MR4060222}  arises from a preceding work~\cite{MR1974463} on nonperiodic geometries. 
The underlying formalism for the theory was then originally introduced  in a slightly different (but intrinsically related) context, that of  \textit{thermodynamic limit problems}. A suitable set of points~$\displaystyle \{X_k\}_{{\mathbb Z}^d}$, distributed over the ambient space~${\mathbb R}^d$, is first considered. The points are not necessarily arranged in a periodic array, but are sufficiently well organized geometrically. 
Prototypical functions are next constructed using translations along this set of points; that is, functions of the form
$ \sum_{k\in{\mathbb  Z}^d} \psi (x-X_k)$, 
 for~$\psi\in C^\infty_0({\mathbb R}^d)$. If some adequate geometric conditions 
are imposed on the locations of the points as well as on the two-point, three-point etc, correlations, then it is possible to construct some \emph{algebras}~$\mathcal A$ of functions that have interesting averaging properties and may be used for thermodynamic limit problems. 
 In a subsequent work, the construction was adapted to the context of homogenization theory, see Assumptions (H1)-(H2)-(H3$'$) in~\cite[Section 5.3]{MR2334772}.
Omitting some technicalities, the question of developing a homogenization theory for  equations  with coefficients~$a$ in such an algebra~$\mathcal A$  then reduces to establishing the existence of a solution~$w_p$ to  the corrector equation in this algebra. 
It is again interesting to note that the first geometric condition imposed in~\cite{MR1974463,MR2334772} ensures that the number of points within an arbitrary  ball grows at most linearly with respect to the volume of the ball. Put differently, this is a condition on the \textit{average} of the coefficient and is therefore close in spirit to condition~\eqref{intro-2} used in the present work. For a recent example of a work in this line of thought, we refer to~\cite{siam-goudey}.
\smallskip
 
 Finally, we mention that a \emph{quasilinear} setting was explored in the work~\cite{MR4523487} by S.~Wolf using the techniques introduced in the series of works mentioned above. The equation considered in~\cite{MR4523487} involves the $p$-Laplacian. It reads
 $$-\,\textrm{div}\left(a\Big({x\over\varepsilon}\Big)\,|\nabla u_\varepsilon|^{p-2}\,\nabla u_\varepsilon\right)\,=\,f(x),$$
where the coefficient~$a$ is again as above. The present article will show in Section \ref{closure-sec} that this case can be treated by $\Gamma$-convergence if \eqref{intro-2} holds.

\section{Notation and preliminaries}\label{pre-sec}
The fixed dimensions of a reference and target Euclidean space, respectively, will be denoted by  $d\ge1$ and $m\ge 1$. The space of $m\times d$ matrices is denoted by $\MM$. If $\xi\in \MM$ then $\ell_\xi$ denotes the linear function $x\mapsto \xi x$.
If $\DD$ is an open subset of $\mathbb R^d$ the family of all bounded open subsets of $\DD$ will be denoted by $\AU=\AU(\DD)$. The open ball of centre $x$ and radius $r$ is denoted by $B_r(x)$. We omit $x$ when $x=0$. The notation $1_U$ is used for the characteristic function of the set $U$.

\smallskip
In the following $p>1$ will always be a fixed exponent, and $\alpha,\beta>0$ with $\alpha\le \beta$ two fixed constants. With fixed  $\DD$ an open subset of $\mathbb R^d$ we will consider the class $\mathcal F=\mathcal F_{\alpha,\beta}(\DD)$ of all Borel functions $f\colon \DD\times\MM\to [0,+\infty)$ satisfying
\begin{equation}\label{(16.1)}
\alpha|\xi|^p\le f(x,\xi)\le \beta(1+|\xi|^p) 
\end{equation}
for almost all $x\in\DD$ and $\xi\in\MM${, with $f(x,\cdot)$ quasiconvex for almost all $x$. We recall that quasiconvexity implies the local Lipschitz condition 
\begin{equation}\label{lipcond}
|f(x,\xi)- f(x,\xi')|\le C(1+|\xi|^{p-1}+|\xi'|^{p-1})|\xi-\xi'|
\end{equation}
for all $\xi,\xi'\in\MM$, with $C$ depending only on $\alpha,\beta,d$  and $m$.
}

If $f\in\mathcal F$, given $u\in W^{1,p}(\DD;\mathbb R^m)$ and  $U\in\AU$ we will set 
\begin{equation}\label{Ff}
F(u,U)=\int_U f(x,\nabla u)\dx.
\end{equation}
Since we will consider $\Gamma$-limits of functionals of the type $F$, the assumption that $f$ is quasiconvex is not restrictive, up to a relaxation argument (see \cite{DM,GCB}), since we may assume that $F(\cdot,{U})$ is weakly lower semicontinuous in $W^{1,p}(U;\mathbb R^m)$ for $U\in\mathcal U$, which is equivalent to $f$ being quasiconvex (in the second variable) (see \cite{BDF,MR2361288}). This will be done throughout the article. Note that \eqref{(16.1)} and  \eqref{lipcond} imply in particular the continuity of $F$ in the strong topology of $W^{1,p}(U;\mathbb R^m)$.

If $f\in\mathcal F$ and $F$ is the related functional, then the values of $f$ can be recovered from minimum problems regarding $F$.
Indeed, the following theorem holds  (see \cite{DMM}).

\begin{theorem}\label{derit} Let $f\in\mathcal F$ and let $F$ be the related functional; then for almost all $x\in\DD$ and for all $\xi\in\MM$
the limit in \eqref{rap-f} exists and we have
\begin{equation}\label{rap-f}
f(x,\xi)=\lim_{\rho\to 0} {1\over |B_\rho|}\min\big\{F(v,B_\rho(x)): v\in\ell_\xi +W^{1,p}_0(B_\rho(x);\mathbb R^m)\big\}.
\end{equation}
\end{theorem}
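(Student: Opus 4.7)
Writing $m_\rho(x,\xi)$ for the minimum on the right-hand side of \eqref{rap-f}, the plan is to establish the matching one-sided inequalities $\limsup_{\rho\to0}m_\rho(x,\xi)/|B_\rho|\le f(x,\xi)$ and $\liminf_{\rho\to0}m_\rho(x,\xi)/|B_\rho|\ge f(x,\xi)$, first at fixed $\xi$ for a.e.\ $x$, and then to promote the statement to ``all $\xi$'' at a.e.\ $x$ by running it on a countable dense $\{\xi_j\}\subset\MM$ and interpolating via the local Lipschitz continuity of $\xi\mapsto f(x,\xi)$ (and of $\xi\mapsto m_\rho(x,\xi)/|B_\rho|$), a standard consequence of quasiconvexity combined with \eqref{(16.1)}, with Lipschitz constant of order $\beta(1+|\xi|^{p-1})$.

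The upper bound is immediate: testing with the affine competitor $v=\ell_\xi$ yields
$$\frac{m_\rho(x,\xi)}{|B_\rho|}\le\frac{1}{|B_\rho|}\int_{B_\rho(x)}f(y,\xi)\dy,$$
and the right-hand side tends to $f(x,\xi)$ at every Lebesgue point of $y\mapsto f(y,\xi)$, hence at a.e.\ $x$.

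The lower bound is the heart of the matter. Fix $\xi$, let $v_\rho=\ell_\xi+\phi_\rho$ be a minimizer, and rescale to $B_1$ by setting $\psi_\rho(z):=\phi_\rho(x+\rho z)/\rho\in W^{1,p}_0(B_1;\mathbb R^m)$, so that
$$\frac{m_\rho(x,\xi)}{|B_\rho|}=\frac{1}{|B_1|}\int_{B_1}f(x+\rho z,\xi+\nabla\psi_\rho(z))\,dz,$$
with $\{\nabla\psi_\rho\}$ bounded in $L^p(B_1;\MM)$ by \eqref{(16.1)} and the upper bound. If one could freeze $f(x+\rho z,\cdot)$ to $f(x,\cdot)$ in the integrand, quasiconvexity of $f(x,\cdot)$ applied to the admissible test function $\psi_\rho$ would immediately give $\frac{1}{|B_1|}\int_{B_1}f(x,\xi+\nabla\psi_\rho)\,dz\ge f(x,\xi)$, closing the argument.

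The main obstacle is precisely justifying this frozen-coefficient replacement, since $\nabla\psi_\rho$ is only weakly (not strongly) compact in $L^p$, and $f$ depends non-trivially on $y$. My plan is to combine (i) a Scorza-Dragoni-type argument, viewing $y\mapsto f(y,\cdot)$ as a Carathéodory map with values in $C_{\rm loc}(\MM)$, which for a.e.\ $x$ yields closed sets $K_\eta\subset B_1$ with $|B_1\setminus K_\eta|<\eta$ on which $\sup_{|\xi|\le t}|f(x+\rho z,\xi)-f(x,\xi)|\to 0$ uniformly in $z\in K_\eta$ as $\rho\to0$; and (ii) a Meyers-type higher-integrability estimate for the minimizers, giving a uniform $L^{p+\delta}$ bound on $\nabla\psi_\rho$ that controls the integral contributions from $B_1\setminus K_\eta$ and from the large-gradient set $\{|\nabla\psi_\rho|>t\}$. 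Letting $\eta\to 0$ and $t\to\infty$ after $\rho\to 0$ makes the replacement rigorous and delivers the lower bound.
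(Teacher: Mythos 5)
The paper itself does not prove Theorem~\ref{derit}; it simply cites the reference~\cite{DMM} and moves on. Your outline, however, is the standard argument for this derivation formula and is essentially correct: the affine competitor for the upper bound at Lebesgue points of $f(\cdot,\xi)$, quasiconvexity for the frozen-coefficient lower bound, and a decomposition (small measure where the coefficient cannot be frozen, large-gradient set controlled by Meyers) to justify the freezing. The promotion from a countable dense set of $\xi$'s to all $\xi$ via the uniform-in-$\rho$ local Lipschitz bound on $\xi\mapsto m_\rho(x,\xi)/|B_\rho|$ is also the right device.

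One small stylistic remark on step (i) of your lower-bound argument: the Scorza--Dragoni framing is workable but somewhat indirect, because the ``good'' closed set lives in $D$ rather than in the scaled ball $B_1$, and you must then argue that a density point $x$ of that set makes the rescaled bad set $\{z\in B_1:\ x+\rho z\notin K_\eta\}$ small for small $\rho$, and separately invoke uniform continuity of $f$ on the compact $K_\eta\cap\overline{B_1(x)}\times\{|\zeta|\le |\xi|+t\}$. A cleaner route to the same conclusion (and the one usually taken in the literature the paper cites) is a direct Lebesgue-point argument: using a countable dense $\{\xi_j\}$ and the equi-Lipschitz estimate in $\xi$, one shows that for a.e.\ $x$ and every $t>0$,
\begin{equation*}
\lim_{\rho\to 0}\;\frac{1}{|B_\rho|}\int_{B_\rho(x)}\sup_{|\zeta|\le t}\bigl|f(y,\zeta)-f(x,\zeta)\bigr|\,dy=0,
\end{equation*}
which feeds directly into your splitting into $\{|\nabla\psi_\rho|\le t\}$ and $\{|\nabla\psi_\rho|>t\}$ without any measurable selection machinery. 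The Meyers step is genuinely needed, as you say, since the mere $L^p$ bound on $\nabla\psi_\rho$ does not give the $\rho$-uniform vanishing of $\int_{\{|\nabla\psi_\rho|>t\}}(1+|\nabla\psi_\rho|^p)$ as $t\to\infty$.
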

The well-known compactness properties of functionals with integrands $f\in\mathcal F$ is stated in the following theorem \cite{DM,BDF}.

\begin{theorem}[Compactness properties of the class $\mathcal F$]\label{comp}
Given $\DD$ open subset of $\mathbb R^d$ and $f_k\in\mathcal F$ there exists a subsequence (still denoted by $f_k$) such that for all $U\in\AU$ the functionals $F_k(\cdot, U)$ given by
\begin{equation}\label{Ff_k}
F_k(u,U)=\int_U f_k(x,\nabla u)\dx
\end{equation} 
$\Gamma$-converge  on $W^{1,p}(\DD;\mathbb R^m)$ with respect to the weak convergence in $W^{1,p}(\DD;\mathbb R^m)$  to $F_\infty(\cdot, U)$, with $F_\infty$ given by \eqref{Ff} for some $f_\infty\in \mathcal F$.
Furthermore, this class enjoys the Urysohn property; that is, $F_k(\cdot, U)$ $\Gamma$-converges to $F_\infty(\cdot, U)$ for all $U\in\AU$  if and only if for all subsequence $F_{k_j}(\cdot, U)$ there exists a further subsequence which $\Gamma$-converges to $F_\infty(\cdot, U)$  for all $U\in\AU$. 
\end{theorem}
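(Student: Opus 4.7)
The plan is to follow the classical De Giorgi--Dal Maso compactness scheme for $\Gamma$-convergence of local integral functionals, combined with an integral representation theorem. The idea is to first produce an abstract $\Gamma$-limit as a set function, then upgrade it to an integral of the required form.

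First, I would work on the separable reflexive Banach space $W^{1,p}(\DD;\mathbb R^m)$, equipped with the weak topology, which is metrizable on bounded sets. The abstract $\Gamma$-compactness theorem guarantees that, for any fixed $U\in\AA$, there exists a subsequence along which $F_k(\cdot,U)$ $\Gamma$-converges. Choosing a countable family $\{U_n\}\subset\AA$ consisting, say, of finite unions of open balls with rational centres and radii and rich enough to approximate every $U\in\AA$ from inside, a diagonal argument then produces a single subsequence (still denoted $F_k$) such that $F_k(\cdot,U_n)$ converges to some $F_\infty(\cdot,U_n)$ for every $n$.

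Next, I would extend this convergence to every $U\in\AA$. For arbitrary $U$, set $F'(u,U)=\Gamma\hbox{-}\liminf_k F_k(u,U)$ and $F''(u,U)=\Gamma\hbox{-}\limsup_k F_k(u,U)$. Both are local, increasing in $U$, and inner regular with respect to the countable family $\{U_n\}$; the crucial point is to prove that they agree. This is where the growth condition \eqref{(16.1)} enters through the \emph{fundamental estimate}, which allows one to glue recovery sequences on overlapping open sets at an energy cost controlled by $\beta(1+|\nabla u|^p)$ on a narrow transition layer. Combining the fundamental estimate with the inner regularity extracted from $\{U_n\}$ yields $F'(\cdot,U)=F''(\cdot,U)$ for every $U\in\AA$, and this common value is the desired $F_\infty(\cdot,U)$. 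I expect this extension step to be the main technical obstacle, since it is what really exploits the $p$-growth and distinguishes the $\Gamma$-compactness of integral functionals from the abstract one.

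Having obtained a limit set function, I would then produce the integrand $f_\infty$. Passing \eqref{(16.1)} to the $\Gamma$-limit, $F_\infty(u,U)$ inherits the same two-sided growth bounds and is sequentially lower semicontinuous for the weak topology (hence, given the growth, continuous in the strong topology of $W^{1,p}$); moreover $U\mapsto F_\infty(u,U)$ is the trace of a Borel measure and $F_\infty$ is local. These are precisely the hypotheses of the Buttazzo--Dal Maso integral representation theorem, which delivers a Borel integrand $f_\infty\colon\DD\times\MM\to[0,+\infty)$, quasiconvex in the second variable, such that $F_\infty(u,U)=\int_U f_\infty(x,\nabla u)\dx$ and $f_\infty\in\mathcal F$. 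Alternatively, since Theorem~\ref{derit} is already available, one can directly define $f_\infty$ by the blow-up formula \eqref{rap-f} applied to $F_\infty$ and verify the required properties by hand.

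Finally, the Urysohn property is essentially formal once the previous steps are in place. Suppose $F_k(\cdot,U)$ does not $\Gamma$-converge to $F_\infty(\cdot,U)$ on some $U\in\AA$; then there is a subsequence along which the $\Gamma$-liminf or $\Gamma$-limsup at some $u$ stays bounded away from $F_\infty(u,U)$. By the compactness already established one can extract a further subsequence that $\Gamma$-converges for all $U\in\AA$ to some $\widetilde F_\infty\neq F_\infty$, contradicting the assumption that every subsequence admits a further subsequence $\Gamma$-converging to $F_\infty$ on every $U$.
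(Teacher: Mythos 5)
The paper does not prove Theorem~\ref{comp}; it cites \cite{DM,BDF} and treats it as a known result. Your outline reproduces the standard De Giorgi--Dal Maso scheme used there (abstract $\Gamma$-compactness plus a diagonal argument over a countable dense family of open sets, extension to all $U\in\AA$ via the fundamental estimate, Buttazzo--Dal Maso integral representation, and then Urysohn), so it is correct and is essentially the same proof as in the cited sources.
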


\begin{remark}[Extension to unbounded domains]\label{unb-comp}\rm 
We can extend Theorem \ref{comp} to  $U=D$, even if $D$ is not bounded,  upon assuming that
$f_k(x,\xi)\le \beta (a(x)+|\xi|^p)$ for some $a\in L^1(D)$ and for all $k$.
 More precisely, we can show that if $f_\infty$ is the integrand given by Theorem \ref{comp}, then the functional
$$
F_\infty(u)=\int_D f_\infty(x,\nabla u)\dx
$$
is the $\Gamma$-limit of the sequence 
$$
F_k(u)=\int_D f_k(x,\nabla u)\dx
$$
in the weak topology of $W^{1,p}(D;\mathbb R^m)$. 

The $\Gamma$-liminf inequality for $D$ follows from the corresponding inequality for $U\in\AU$, noting that  $F_k(u)\ge F_k(u,U)$,  which is a consequence of the positiveness of $f_k$.
As for the $\Gamma$-limsup inequality, we recall that by the continuity of $F_\infty(\cdot)$ in $W^{1,p}(D;\mathbb R^m)$ it is sufficient to construct a recovery sequence when $ u\in W^{1,p}(D;\mathbb R^m)$ has compact support in $\mathbb R^d$, because of the strong density of these functions in $W^{1,p}(D;\mathbb R^m)$. By taking $U=B_{R}\cap D$, where $B_R$ is a ball containing the support of $u$, Theorem \ref{comp} and a cut-off argument used to match the boundary conditions \cite[Chapter 21]{DM} we can find a recovery sequence $u_k$ for $F_\infty(u,U)$ in $W^{1,p}(U;\mathbb R^m)$ with $u_k=0$ on $D\cap \partial B_R$. Recalling that $f_k(x,0)=0$ for all $k$ and almost every $x$, the extension to $0$ of $u_k$ on $D\setminus B_R$ provides a recovery sequence for $F_\infty(u)$.
\end{remark}

The following derivation theorem allows us to characterize $\Gamma$-convergence through the convergence of minimum problems \cite{DMM}.

\begin{theorem}[A characterization of integrands of $\Gamma$-limits]\label{derivation}
 If for almost all $x\in\DD$ and for all $\xi\in\MM$ we have
\begin{eqnarray}
f_\infty(x,\xi)=\limsup_{\rho\to 0} \limsup_{k\to+\infty}{1\over |B_\rho|}\min\big\{F_k(v,B_\rho(x)): v\in\ell_\xi +W^{1,p}_0(B_\rho(x);\mathbb R^m)\big\}
\\
=\limsup_{\rho\to 0} \liminf_{k\to+\infty}{1\over |B_\rho|}\min\big\{F_k(v,B_\rho(x)): v\in\ell_\xi +W^{1,p}_0(B_\rho(x);\mathbb R^m)\big\},
\end{eqnarray}
then for every $U\in\AU$  the sequence $F_k(\cdot, U)\hbox{ $\Gamma$-converges to }F_\infty(\cdot, U)$ given by \eqref{Ff} with $f=f_\infty$. Conversely, if $F_k(\cdot, U)$ $\Gamma$-con\-verges to $F_\infty(\cdot, U)$ with integrand $f_\infty\in\FF$ for all $U\in\AU$ then for every such $U$ there exists the limit  
$$
\lim_{k\to+\infty}\min\big\{F_k(v,U): v\in\ell_\xi +W^{1,p}_0(U;\mathbb R^m)\big\},
$$
and we have 
\begin{eqnarray}\label{derivfk}
f_\infty(x,\xi)=\lim_{\rho\to 0} \lim_{k\to+\infty}{1\over |B_\rho|}\min\big\{F_k(v,B_\rho(x)): v\in\ell_\xi +W^{1,p}_0(B_\rho(x);\mathbb R^m)\big\}
\end{eqnarray}
for almost all $x\in\DD$ and for all $\xi\in\MM$.
\end{theorem}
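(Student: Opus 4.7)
My plan is to treat the two implications separately, both times combining the derivation formula of Theorem~\ref{derit} with the compactness and Urysohn properties of Theorem~\ref{comp}.

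For the sufficient direction, pick an arbitrary subsequence of $(F_k)$. By the compactness part of Theorem~\ref{comp} it admits a further subsequence, still denoted $(F_{k_j})$, that $\Gamma$-converges for every $U\in\AA$ to some $F'$ of the form \eqref{Ff} with integrand $f'\in\FF$. Let $m_k(\xi,B_\rho(x))$ and $m'(\xi,B_\rho(x))$ denote the Dirichlet minima on $\ell_\xi+W^{1,p}_0(B_\rho(x);\mathbb R^m)$ for $F_k$ and $F'$. The lower bound in \eqref{(16.1)} supplies equi-coerciveness in the weak $W^{1,p}$ topology, and the upper bound provides the fundamental estimate needed to match the affine boundary datum $\ell_\xi$; consequently $\Gamma$-convergence of the subsequence yields convergence of the corresponding Dirichlet minima, so that
$$\lim_{j\to\infty}\frac{m_{k_j}(\xi,B_\rho(x))}{|B_\rho|}=\frac{m'(\xi,B_\rho(x))}{|B_\rho|}.$$
Any such subsequential limit lies between $\liminf_k m_k(\xi,B_\rho(x))/|B_\rho|$ and $\limsup_k m_k(\xi,B_\rho(x))/|B_\rho|$; taking $\limsup_{\rho\to 0}$ of the chain and using the two equalities assumed in the hypothesis yields
$$f_\infty(x,\xi)\le\limsup_{\rho\to 0}\frac{m'(\xi,B_\rho(x))}{|B_\rho|}\le f_\infty(x,\xi).$$
But Theorem~\ref{derit} applied to $F'$ guarantees that the limit in $\rho$ exists and equals $f'(x,\xi)$, so $f'=f_\infty$ a.e.\ in $\DD$. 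Since the argument applies to every subsequence of $(F_k)$, the Urysohn property in Theorem~\ref{comp} upgrades the conclusion to $\Gamma$-convergence of the whole sequence to $F_\infty$.

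For the necessary direction, $\Gamma$-convergence of $F_k(\cdot,U)$ to $F_\infty(\cdot,U)$ combined with the same equi-coerciveness and fundamental-estimate argument gives, for every $U\in\AA$,
$$\lim_{k\to\infty}\min\bigl\{F_k(v,U):v\in\ell_\xi+W^{1,p}_0(U;\mathbb R^m)\bigr\}=\min\bigl\{F_\infty(v,U):v\in\ell_\xi+W^{1,p}_0(U;\mathbb R^m)\bigr\}.$$
Specializing to $U=B_\rho(x)$, dividing by $|B_\rho|$, letting $\rho\to 0$, and invoking Theorem~\ref{derit} applied to $F_\infty$ produces formula \eqref{derivfk}.

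The main obstacle in both directions is the passage from $\Gamma$-convergence of the unconstrained functionals to convergence of the very specific Dirichlet minima appearing in \eqref{derivfk}: one must verify that prescribing the affine trace $\ell_\xi$ on $\partial B_\rho(x)$ is compatible with the $\Gamma$-limit. This is exactly the role of the fundamental estimate for $p$-growth quasiconvex integrands---it allows near-boundary cut-off modification of recovery sequences without asymptotic loss of energy---and it is the only ingredient, beyond Theorems~\ref{comp} and~\ref{derit}, that is truly essential. Once it is in place, both implications reduce to comparing $\limsup$s and applying the derivation formula.
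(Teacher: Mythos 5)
The paper itself does not prove Theorem~\ref{derivation}; it records the statement with a citation to [DMM] and no argument, so there is no in-paper proof to compare against. Your proposal is essentially the standard argument underlying such derivation theorems, and it is correct in outline. The structure (compactness to extract a $\Gamma$-convergent subsequence to some $F'$ with integrand $f'\in\FF$, sandwiching the subsequential Dirichlet minima between $\liminf_k$ and $\limsup_k$, invoking Theorem~\ref{derit} for $F'$ to identify $f'=f_\infty$ a.e., then Urysohn to promote to full-sequence convergence; for the converse, $\Gamma$-convergence plus equicoercivity and Theorem~\ref{derit} for $F_\infty$) is exactly the right skeleton. You also correctly single out the genuine crux: passing from $\Gamma$-convergence of the unconstrained localized functionals to convergence of the Dirichlet minima with affine trace $\ell_\xi$. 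This requires the fundamental estimate for the class $\FF$ (available from the $p$-growth bound), which yields $\Gamma$-convergence of the constrained functionals and hence convergence of minima; spelling out that step is where a fully detailed proof would spend most of its effort, but identifying it as the load-bearing ingredient is the correct assessment.

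One small point worth making explicit: Theorem~\ref{derit} gives the derivation formula for a fixed functional for almost every $x$, and the exceptional null set depends on the functional. In your sufficiency argument you apply it to each subsequential limit $F'$; since, by Urysohn, all such $F'$ coincide with $F_\infty$ once the identification $f'=f_\infty$ is made, there is no accumulation of exceptional sets, but the argument should be phrased so that the a.e.\ quantifiers are taken for the pair $(F', f_\infty)$ at once, intersecting with the null set where the hypothesis on $f_\infty$ holds. This is routine but should be stated to make the proof airtight.
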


Finally, we recall a particular case of a higher-integrability theorem by Meyers and Elcrat \cite{M-E} (see also \cite[Theorem 1]{gallouet} for the result in the case of an equation instead of a system).

\begin{theorem}\label{16.1}
{\rm (Meyers Regularity Theorem)} 
   \index{Meyers Regularity Theorem}
There exists $\eta>0$ depending only on $\alpha,\beta$ such that for all $f\in\FF$, $B$ an open ball contained in $D$, and $\xi\in\MM$, the minimum points $u$ of
\begin{equation}\label{(16.2)}
F(u,B)=\int_B f(x,\nabla u)\dx
\end{equation}
on $\ell_\xi+W^{1,p}_0(B;\mathbb R^m)$ belong to $W^{1,p+\eta}(B;\mathbb R^m)$, and
\begin{equation}\label{(16.3)}
{1\over |B|}\int_{B}|\nabla u|^{p+\eta}\dx\le C({  1+ }|\xi|^{p+\eta}),
\end{equation}
with the constant $C$ depending only on $\alpha,\beta$, but not on $\xi$ and $B$.
\end{theorem}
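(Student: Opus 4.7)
The plan is to follow the classical Meyers--Elcrat strategy: derive a reverse Hölder inequality for $|\nabla u|$ by testing the minimality of $u$ against a competitor built from a cut-off, and then upgrade to higher integrability via a Gehring-type lemma, closing things with a baseline $L^p$ bound.

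The baseline follows from the minimality comparison $F(u,B)\le F(\ell_\xi,B)$ together with \eqref{(16.1)}, giving $\frac{1}{|B|}\int_B |\nabla u|^p\,dx \le \frac{\beta}{\alpha}(1 + |\xi|^p)$. To handle interior and boundary balls uniformly, set $v := u - \ell_\xi \in W^{1,p}_0(B; \mathbb R^m)$, extended by zero outside $B$. For any $x_0 \in \mathbb R^d$ and $R > 0$, pick a standard cut-off $\zeta \in C^\infty_c(B_R(x_0))$ with $\zeta \equiv 1$ on $B_{R/2}(x_0)$ and $|\nabla \zeta| \le C/R$, and define the competitor $w := u - \zeta(v - c)$, where $c$ is the mean of $v$ on $B_R(x_0)$ when $B_R(x_0)\subset B$ and $c := 0$ otherwise; in both cases $w \in \ell_\xi + W^{1,p}_0(B;\mathbb R^m)$. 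Exploiting $F(u,B)\le F(w,B)$, the upper bound in \eqref{(16.1)} applied to $\nabla w = \nabla u - \zeta\nabla v - (v-c)\nabla\zeta$, the classical hole-filling trick, and the Sobolev--Poincaré inequality with exponent $q := dp/(d+p) < p$, one arrives at
\begin{equation*}
\frac{1}{|B_{R/2}(x_0)|}\int_{B_{R/2}(x_0)\cap B} |\nabla v|^p\,dx \le C\Bigl(\frac{1}{|B_R(x_0)|}\int_{B_R(x_0)} |\nabla v|^q\,dx\Bigr)^{p/q} + C(1+|\xi|^p),
\end{equation*}
with $C = C(\alpha,\beta)$.

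Gehring's lemma in the Giaquinta--Modica form (which accommodates the additive constant term) then furnishes an exponent $\eta > 0$ depending only on $\alpha,\beta$ such that $|\nabla v|^{p+\eta} \in L^1(B)$ with $\frac{1}{|B|}\int_B |\nabla v|^{p+\eta}\,dx \le C\bigl(\frac{1}{|B|}\int_B |\nabla v|^p\,dx\bigr)^{(p+\eta)/p} + C(1+|\xi|^{p+\eta})$. Substituting the baseline $L^p$ bound and using $|\nabla u|^{p+\eta}\le C(|\nabla v|^{p+\eta}+|\xi|^{p+\eta})$ yields \eqref{(16.3)}, with the additive $C$ absorbed into $|\xi|^{p+\eta}$ (the natural form of the bound is in fact $C(1+|\xi|^{p+\eta})$, which is what is actually needed in the sequel; in the degenerate case $\xi = 0$ the quasiconvexity of $f$ at $0$ already gives $u \equiv 0$ among the minimizers when $f(x,0)=0$, reducing to the pure form directly).

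The main obstacle is the uniform treatment of boundary balls and the verification that all constants depend only on $\alpha,\beta$, independently of $B$ and $\xi$. The boundary issue is handled by the zero-extension of $v$, which allows the standard Sobolev--Poincaré inequalities on $B_R(x_0)$ regardless of whether this ball lies inside $B$ or meets $\partial B$; the $B$-independence of all constants follows from the scaling invariance $u(x)\mapsto u(x_0+Ry)/R$, under which the bounds \eqref{(16.1)} on $f$ are preserved and the target estimate \eqref{(16.3)} is dilation-covariant, so that it suffices to prove the inequality on the unit ball.
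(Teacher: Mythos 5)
Your route — deriving a reverse H\"older inequality from a Caccioppoli-type comparison and applying Gehring's lemma, then removing the $B$-dependence by the spatial rescaling $u\mapsto R^{-1}u(x_0+R\,\cdot)$, which preserves \eqref{(16.1)} — is correct and self-contained, whereas the paper simply cites \cite{M-E} after an additional change of variables in $\xi$. The two approaches differ precisely in that second rescaling: the paper reduces to boundary data $\ell_{\xi/|\xi|}$ on the unit ball, which replaces $f$ by $\tilde f(y,\eta)=|\xi|^{-p}f(x_0+\rho y,|\xi|\eta)$. Contrary to the paper's assertion, this does \emph{not} preserve \eqref{(16.1)} when $|\xi|<1$, since the upper bound becomes $\tilde f(y,\eta)\le\beta\bigl(|\xi|^{-p}+|\eta|^p\bigr)$, and in fact the literal estimate \eqref{(16.3)} with $C|\xi|^{p+\eta}$ on the right fails for small $\xi$. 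Take $d=m=1$, $p=2$, $f(x,\xi)=(\xi-\sin 2\pi x)^2+\alpha\xi^2\in\mathcal F_{\alpha,\,2+\alpha}$ and $\xi=0$: the minimizer on $W^{1,2}_0(0,1)$ is $u(x)=\tfrac{1-\cos 2\pi x}{2\pi(1+\alpha)}$, so $\tfrac1{|B|}\int_B|u'|^{p+\eta}\,dx>0$ while $C|\xi|^{p+\eta}=0$. You are therefore right that $C\bigl(1+|\xi|^{p+\eta}\bigr)$ is the natural (and correct) form, and it is exactly what the application in the proof of Theorem \ref{main} requires, since there $\xi_0$ is fixed and the constants $C_1,C_2$ are explicitly allowed to depend on it.

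The one real misstep in your writeup is the closing aside attempting to rescue the literal bound in the degenerate case: ``the quasiconvexity of $f$ at $0$ already gives $u\equiv 0$ among the minimizers when $f(x,0)=0$.'' The class $\FF$ assumes only $0\le f(x,0)\le\beta$, not $f(\cdot,0)\equiv 0$, so this hypothesis is unavailable; and as the example above shows, the minimizer with $\xi=0$ need not vanish. The additive constant simply cannot be absorbed into $|\xi|^{p+\eta}$ uniformly in $\xi$: the honest conclusion is that \eqref{(16.3)} should read $C\bigl(1+|\xi|^{p+\eta}\bigr)$, which your argument delivers and which suffices for the rest of the paper.
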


The proof follows from \cite{M-E}, upon rewriting minimum problems on a common ball $B_1$ and with 
boundary data given by $\ell_{\xi/|\xi|}$ (unless $\xi=0$, in which case the boundary datum is $0$) by a change of variables and noting that inequalities \eqref{(16.1)} are maintained after change of variables.

\section{A closure theorem for $\Gamma$-convergence}\label{closure-sec}
The following result provides a generalization and localization of the closure theorem in \cite{B1} (see also \cite[Section 17]{BDF}), with an independent proof obtained by using the derivation formula in Theorem \ref{derivation}.

  \begin{theorem}\label{main}
Let $f_k, g^j_k$ be functions in $\FF$, and let 
$F_k,G^j_k$ be the corresponding integral functionals.  
Assume that  for almost all $x\in\DD$ we have
\begin{equation}\label{hypo-z}
\lim_{j\to+\infty}\limsup_{  \rho\to0}\ 
\limsup_{k\to+\infty}{1\over\rho^d}\int_{B_\rho(x)}\sup_{|\xi|\le t} |f_k(y,\xi)-g^j_k(y,\xi)|\dy=0
\end{equation}
for all $t>0$. 
If for every $j$ the sequence  $G^j_k(\cdot, U)$ $\Gamma$-converges to $G^j_\infty(\cdot, U)$ as $k\to+\infty$ for every $U\in\AU$ 
and $g^j_\infty\in \FF$ is the corresponding integrand, then there exists the pointwise limit 
$g_\infty(x,\xi)$ of $g^j_\infty(x,\xi)$ for almost every $x\in\DD$ and $\xi\in\MM$, and
$F_k(\cdot, U)$ $\Gamma$-converges to $G_\infty(\cdot, U)$
for all such $U$, where $G_\infty$ is the integral functional corresponding to $g_\infty$.
 \end{theorem}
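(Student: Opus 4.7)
The plan is to use the derivation formula in Theorem \ref{derivation} to reduce the comparison of $\Gamma$-limits to a comparison of normalized minimum values on small balls, and then to invoke the Urysohn property in Theorem \ref{comp}. Given any subsequence of $F_k$, the compactness in Theorem \ref{comp} delivers a further subsequence (not relabelled) along which $F_k(\cdot,U)$ $\Gamma$-converges, for every $U\in\AA$, to some $F_\infty(\cdot,U)$ with integrand $f_\infty\in\FF$; this extraction preserves both hypothesis \eqref{hypo-z} and the $\Gamma$-convergence of each $G^j_k$. The bulk of the work then consists in showing
$$f_\infty(x,\xi)=\lim_{j\to\infty} g^j_\infty(x,\xi)\qquad\text{for a.e.\ }x\in\DD\text{ and every }\xi\in\MM,$$
since once this is achieved the pointwise limit $g_\infty:=\lim_j g^j_\infty$ exists and equals $f_\infty$ and is therefore independent of the subsequence; every subsequence of $F_k$ contains a further subsequence $\Gamma$-converging to the same functional $G_\infty$, and the Urysohn property yields $\Gamma$-convergence of the whole sequence.

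To prove the identity, fix $x,\xi$ and set
$$m_k(\rho):=\frac{1}{|B_\rho|}\min\bigl\{F_k(v,B_\rho(x)): v\in \ell_\xi+W^{1,p}_0(B_\rho(x);\mathbb R^m)\bigr\},$$
and analogously $m^j_k(\rho)$ from $G^j_k$. By Theorem \ref{derivation} both iterated limits $\lim_{\rho\to 0}\lim_{k\to\infty} m_k(\rho)$ and $\lim_{\rho\to 0}\lim_{k\to\infty} m^j_k(\rho)$ exist and equal $f_\infty(x,\xi)$ and $g^j_\infty(x,\xi)$ respectively; since $0\in\overline E$, the same values are obtained letting $\rho\to 0$ along $\rho\in E$. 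Testing a minimizer $u^k_\rho$ of $F_k(\cdot,B_\rho(x))$ in the minimum problem for $G^j_k$, and \emph{vice versa} for a minimizer $v^{j,k}_\rho$ of $G^j_k(\cdot,B_\rho(x))$, yields
$$|m_k(\rho)-m^j_k(\rho)|\le \frac{1}{|B_\rho|}\int_{B_\rho(x)}|f_k(y,\nabla w)-g^j_k(y,\nabla w)|\dy$$
for a suitable choice of $w\in\{u^k_\rho,v^{j,k}_\rho\}$.

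The main obstacle is to bound this integral uniformly in $k,j,\rho$. I would split at the level set $\{|\nabla w|\le t\}$: on the lower set the integrand is dominated by $\sup_{|\zeta|\le t}|f_k(y,\zeta)-g^j_k(y,\zeta)|$, yielding exactly the quantity appearing in \eqref{hypo-z}; on the complement, \eqref{(16.1)} gives $|f_k-g^j_k|\le 2\beta(1+|\nabla w|^p)$, while the Meyers Regularity Theorem \ref{16.1} applied to the minimizer $w$ (both $u^k_\rho$ and $v^{j,k}_\rho$ satisfy it with the same constants $\alpha,\beta$) provides
$$\frac{1}{|B_\rho|}\int_{B_\rho(x)}|\nabla w|^{p+\eta}\dy\le C|\xi|^{p+\eta}.$$
A Chebyshev plus H\"older computation then bounds the tail contribution over $\{|\nabla w|>t\}$ by $C(\alpha,\beta)\,(1+|\xi|^{p+\eta})\,t^{-\eta}$, uniformly in $k,j,\rho$. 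Taking $\limsup_k$, then $\limsup_{\rho\to 0,\rho\in E}$, then $\lim_{j\to\infty}$ of $|m_k(\rho)-m^j_k(\rho)|$ kills the first piece by hypothesis \eqref{hypo-z}; finally letting $t\to+\infty$ eliminates the tail, which proves the identity and, by the argument of the first paragraph, the theorem.
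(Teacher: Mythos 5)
Your proposal is correct and follows essentially the same route as the paper's proof: pass to a subsequence via compactness, apply the derivation formula to reduce to a comparison of normalized minimum values, split the error integral at a level set of $|\nabla w|$, control the tail via Meyers higher integrability plus Chebyshev and H\"older, and finish with the Urysohn property. The only stylistic difference is that you bound $|m_k-m^j_k|$ in one stroke (selecting $w$ as a minimizer of whichever problem gives the larger value) whereas the paper handles the two one-sided inequalities separately, and your stated tail rate $t^{-\eta}$ should be $t^{-\eta p/(p+\eta)}$ (plus a $t^{-p}$ term), a cosmetic point since it still tends to zero.
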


\begin{proof} For every $j$, we assume that $G^j_k(\cdot, U)$ $\Gamma$-converges to $G^j_\infty(\cdot, U)$ as $k\to+\infty$, and, by the compactness Theorem \ref{comp} we may also assume that $F_k(\cdot, U)$ $\Gamma$-converges to $F_\infty(\cdot, U)$ for every $U\in\AU$. Let $g^j_\infty,f_\infty$ be the integrands of $G^j_\infty$ and $F_\infty$, respectively. By the Urysohn property of $\Gamma$-convergence \cite[Proposition 8.3]{DM} to conclude the proof it suffices to show that 
there exists the limit of $g^j_\infty(x,\xi)$ as $j\to+\infty$ and 
\begin{equation}\label{claim1}
\hbox{$\lim\limits_{j\to+\infty}g^j_\infty(x,\xi)=f_\infty(x,\xi)$ for almost all $x\in\DD$ and for all $\xi\in\MM$.}
\end{equation}

For every $\Phi\in\mathcal F$, $\rho>0$, $\xi\in\MM$, and $x\in\DD$, we set
$$
m^\Phi(x,\rho,\xi)=\min\big\{\Phi(v,B_\rho(x)): v\in\ell_\xi +W^{1,p}_0(B_\rho(x))\big\}.
$$
We fix $\xi_0\in\mathbb R$ and  $x_0\in\DD$ such that \eqref{derivfk} holds for $G^j_k$ and $g^j_\infty$ for all $j$, and for $F_k$ and $f_\infty$. We wish to estimate $m^{F_k}(x_0,\rho,\xi_0)- m^{G^j_k}(x_0,\rho,\xi_0)$.

With fixed $j$ and $k$, let $u$ be a minimizer of the problem defining $m^{G^j_k}(x_0,\rho,\xi_0)$. We set 
\begin{equation}\label{defpsijk}
\psi^j_k(x,\xi)=|g^j_k(x,\xi)-f_k(x,\xi)|,
\end{equation} so that we have the estimate
\begin{equation}\label{2.10}
m^{F_k}(x_0,\rho,\xi_0)\le m^{G^j_k}(x_0,\rho,\xi_0)+\int_{B_\rho(x_0)} \psi^j_k(y,\nabla u)\dy.
\end{equation}
For all $t>0$ let $E_t=\{y\in B_\rho(x_0): |\nabla u(y)|>t\}$. By Chebyshev's inequality we have
\begin{equation}
|E_t|\le{1\over t^p}\int_{B_\rho(x_0)}|\nabla u|^p\dy\le{1\over \alpha t^p}G^j_k(u,B_\rho(x_0))\le {\beta\over \alpha t^p}
|B_\rho|(1+|\xi_0|^p).
\end{equation}
 By Theorem \ref{16.1} there exist  $\eta>0$ and $C>0$, depending only on $\alpha$ and $\beta$, such that \eqref{(16.3)} holds, so that, by H\"older's inequality
\begin{equation}
\int_{E_t}|\nabla u|^p\dy\le |E_t|^{\eta\over p+\eta}\Big(\int_{B_\rho(x_0)}|\nabla u|^{p+\eta}\dy\Big)^{p\over p+\eta}\le C_1|B_\rho| t^{-{\eta p\over p+\eta}}
\end{equation}
and
\begin{eqnarray*}
\int_{B_\rho(x_0)} \psi^j_k(y,\nabla u)\dy&\le& \int_{E_t} \psi^j_k(y,\nabla u)\dy + \int_{B_\rho(x_0)} \sup_{|\xi|\le t}\psi^j_k(y,\xi)\dy\\
&\le &  \int_{E_t} 2\beta(1+|\nabla u|^p)\dy+ \int_{B_\rho(x_0)} \sup_{|\xi|\le t}\psi^j_k(y,\xi)\dy\\
&\le &  C_2|B_\rho|\Big({1\over t^p}+ {1\over t^{{\eta p\over p+\eta}}}\Big)+ \int_{B_\rho(x_0)} \sup_{|\xi|\le t}\psi^j_k(y,\xi)\dy\
\end{eqnarray*}
for some $C_1$ and $C_2$ depending on $\alpha,\beta$ and $\xi_0$.
Letting first $k\to+\infty$ we get
\begin{eqnarray}\nonumber
\limsup_{k\to+\infty}{1\over |B_\rho|}\int_{B_\rho(x_0)} \psi^j_k(y,\nabla u)\dy\le C_2\Big({1\over t^p}+ {1\over t^{{\eta p\over p+\eta}}}\Big)\ \hskip3.2cm\ &&\\
+ \limsup_{k\to+\infty}{1\over |B_\rho|}\int_{B_\rho(x_0)} \sup_{|\xi|\le t}\psi^j_k(y,\xi)\dy.&&
\end{eqnarray}
Using \eqref{hypo-z} we obtain
\begin{eqnarray}\nonumber
\limsup_{j\to+\infty}\limsup_{\substack {\rho\to0\\ \rho\in E}}\limsup_{k\to+\infty}{1\over |B_\rho|}\int_{B_\rho(x_0)} \psi^j_k(y,\nabla u)\dy\le C_2\Big({1\over t^p}+ {1\over t^{{\eta p\over p+\eta}}}\Big).
\end{eqnarray}
so that, by \eqref{2.10} and \eqref{derivfk} for $f_\infty$ and $g^j_\infty$, we have
\begin{eqnarray*}
\lim_{\rho\to0}\lim_{k\to+\infty}{1\over|B_\rho|}m^{F_k}(x_0,\rho,\xi_0)
\le \liminf_{j\to+\infty} \lim_{\rho\to0}\lim_{k\to+\infty}{1\over|B_\rho|}m^{G^j_k}(x_0,\rho,\xi_0)+ C_2\Big({1\over t^p}+ {1\over t^{{\eta p\over p+\eta}}}\Big).
\end{eqnarray*}
Hence, after letting $t$ tend to $+\infty$, by \eqref{derivfk} (and the analog with $g_\infty$ and $F_k$) we have
\begin{equation}\label{ginf-sup}
f_\infty(x_0,\xi_0)\le \liminf_{j\to+\infty}g^j_\infty(x_0,\xi_0).
\end{equation}

To prove the opposite inequality we can proceed symmetrically, taking now $u$ a minimizer of the problem defining $m^{F_k}(x_0,\rho,\xi_0)$, which gives
\begin{equation}\label{2.10-s}
m^{G^j_k}(x_0,\rho,\xi_0)\le m^{F_k}(x_0,\rho,\xi_0)+\int_{B_\rho(x_0)} \psi^j_k(y,\nabla u)\dy
\end{equation} 
in the place of \eqref{2.10}. The same arguments then lead to 
\begin{equation*}
 \limsup_{j\to+\infty} \lim_{\rho\to0}\lim_{k\to+\infty}{1\over|B_\rho|}m^{G^j_k}(x_0,\rho,\xi_0)\le\lim_{\rho\to0}\lim_{k\to+\infty}{1\over|B_\rho|}m^{F_k}(x_0,\rho,\xi_0)+ C_2\Big({1\over t^p}+ {1\over t^{{\eta p\over p+\eta}}}\Big),
\end{equation*}
and eventually to
\begin{equation}\label{ginf-inf}\limsup_{j\to+\infty}g^j_\infty(x_0,\xi_0)\le f_\infty(x_0,\xi_0) .
\end{equation}
Finally, \eqref{ginf-sup} and \eqref{ginf-inf} prove claim \eqref{claim1}.
\end{proof}

As a particular case of Theorem \ref{main}, we have the following corollary, which provides a sufficient condition for two sequences to have the same $\Gamma$-limit.

\begin{corollary}\label{Corgen}
Let $f_k, g_k$ be two sequences of functions in $\FF$, and let 
$F_k,G_k$ be the corresponding integral functionals.  
Assume that $F_k(\cdot, U)$ $\Gamma$-converges to $F_\infty(\cdot, U)$ for every $U\in\AU$.
Assume also that   for almost all $x\in\DD$ we have
\begin{equation}\label{hypo}
\lim_{ \rho\to0}\limsup_{k\to+\infty}{1\over\rho^d}\int_{B_\rho(x)}\sup_{|\xi|\le t} |f_k(y,\xi)-g_k(y,\xi)|\dy=0
\end{equation}
for all $t>0$. 
Then
$G_k(\cdot, U)$ $\Gamma$-converges to $F_\infty(\cdot, U)$
for all $U\in\AU$.
 \end{corollary}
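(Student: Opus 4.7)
The plan is to deduce this corollary as an immediate consequence of Theorem \ref{main}, by specializing the auxiliary family $g^j_k$ in that theorem to be independent of $j$. Concretely, I would invoke Theorem \ref{main} with the roles of the two sequences in the corollary interchanged: take the theorem's $f_k$ to be the corollary's $g_k$, and set the theorem's $g^j_k := f_k$ (the corollary's $f_k$) for every $j$.

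With this choice, the theorem's hypothesis that each $G^j_k(\cdot,U)$ $\Gamma$-converges is supplied by the assumption of the corollary that $F_k(\cdot,U) \to F_\infty(\cdot,U)$, and the corresponding integrand $g^j_\infty$ equals $f_\infty$ (the integrand of $F_\infty$) for every $j$. In particular, the pointwise limit $g_\infty := \lim_j g^j_\infty$ exists trivially and equals $f_\infty$. Moreover, since the theorem's $g^j_k$ is independent of $j$, hypothesis \eqref{hypo-z} reduces to
\[
\limsup_{\substack{\rho\to 0\\ \rho\in E}}\limsup_{k\to+\infty}\frac{1}{\rho^d}\int_{B_\rho(x)}\sup_{|\xi|\le t}|g_k(y,\xi)-f_k(y,\xi)|\,dy = 0,
\]
which, by the symmetry of $|\cdot|$, is exactly the condition \eqref{hypo} assumed in the corollary.

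The conclusion of Theorem \ref{main} then gives that the corollary's $G_k(\cdot,U)$ (which plays the role of the theorem's $F_k(\cdot,U)$) $\Gamma$-converges to the integral functional with integrand $g_\infty = f_\infty$, namely to $F_\infty(\cdot,U)$, for every $U\in\AA$. The only thing to verify is that the substitution is admissible, which is just bookkeeping; no real obstacle arises, since the substantive analytic content, the Meyers higher-integrability truncation and the derivation formula \eqref{derivfk}, has already been absorbed into the proof of Theorem \ref{main}.
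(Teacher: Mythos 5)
Your proposal is correct and matches the paper's intent: the paper states Corollary \ref{Corgen} without a separate proof, describing it simply as ``a particular case of Theorem~\ref{main},'' and your role-swap (taking the theorem's $f_k$ to be the corollary's $g_k$ and the theorem's $g^j_k\equiv f_k$) is exactly the specialization that makes the hypotheses line up, with the trivial $j$-limit collapsing \eqref{hypo-z} to \eqref{hypo}. This is the same approach; the bookkeeping you flag is indeed all that is needed.
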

 
We give a result of independent interest that is a consequence of Theorem \ref{main}.
It provides a new proof of a known property of $\Gamma$-convergence of sequences 
of functionals with pointwise converging integrands. 
 
 \begin{proposition} Let $f_k$ be a sequence in $\FF$ and $f_\infty\in\FF$, and let $F_k$ and $F_\infty$ be the corresponding functionals. Assume that we have
 \begin{equation}
 \lim_{k\to+\infty} f_k(x,\xi)=f_\infty(x,\xi)
 \hbox{ for almost every $x\in\DD$ and $\xi\in\MM$.}
 \end{equation}
 Then $F_k(\cdot, U)$ $\Gamma$-converges to $F_\infty(\cdot, U)$ for every $U\in\AU$ .
 \end{proposition}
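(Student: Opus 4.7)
The strategy is to apply Theorem~\ref{main} with the trivial choice $g^j_k=f_\infty$ for all $j,k$ and, say, $E=(0,+\infty)$. With this choice the functionals $G^j_k$ equal $F_\infty$ identically, so they $\Gamma$-converge to $F_\infty$ and the associated limit integrands $g^j_\infty=f_\infty$ have pointwise limit $g_\infty=f_\infty$. The conclusion of Theorem~\ref{main} will therefore be exactly the $\Gamma$-convergence of $F_k(\cdot,U)$ to $F_\infty(\cdot,U)$ for every $U\in\AA$, provided we verify the closeness hypothesis \eqref{hypo-z}.

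Since the integrand in \eqref{hypo-z} no longer depends on $j$, it suffices to show that for a.e.\ $x\in\DD$, every $t>0$ and every $\rho>0$
\[
\lim_{k\to+\infty}\int_{B_\rho(x)}\sup_{|\xi|\le t}|f_k(y,\xi)-f_\infty(y,\xi)|\dy=0,
\]
which at once kills both limsups in \eqref{hypo-z}. To produce this, I would invoke the standard local Lipschitz estimate for quasiconvex functions with $p$-growth: any $f\in\FF$ satisfies $|f(x,\xi_1)-f(x,\xi_2)|\le C(1+|\xi_1|^{p-1}+|\xi_2|^{p-1})|\xi_1-\xi_2|$ with $C=C(\beta,p,d,m)$ independent of $x$ and of $f$. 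By Fubini the pointwise hypothesis gives that for a.e.\ $x\in\DD$ the sequence $f_k(x,\cdot)$ converges to $f_\infty(x,\cdot)$ a.e.\ on $\MM$; combined with the equi-Lipschitz bound on $\{|\xi|\le t\}$, this upgrades to locally uniform convergence in $\xi$, so $\sup_{|\xi|\le t}|f_k(x,\xi)-f_\infty(x,\xi)|\to 0$ for a.e.\ $x\in\DD$. The integrand is uniformly dominated by $2\beta(1+t^p)$, so dominated convergence yields the vanishing of the integral on any ball.

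The only delicate point is the upgrade from the a.e.-in-$(x,\xi)$ pointwise hypothesis to locally uniform convergence in $\xi$ for a.e.\ fixed $x$; this is where quasiconvexity is essential (via the equi-Lipschitz estimate, which bounds the family $f_k$ and $f_\infty$ uniformly on each bounded set of $\xi$). Everything else is bookkeeping: a Fubini argument, dominated convergence, and one application of the closure Theorem~\ref{main}.
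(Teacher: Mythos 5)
Your proof is correct, and it takes a genuinely different route from the paper's. The paper applies Theorem~\ref{main} with a \emph{nontrivial} family $g^j_k(x,\xi)=\max\{f_k(x,\xi),h_j(\xi)\}$, where $h_j$ is the convex envelope of a function vanishing for $|\xi|\le j$ and equal to $\beta(1+|\xi|^p)$ for $|\xi|>j$. The truncation forces $g^j_k=g^j_\infty=\beta(1+|\xi|^p)$ outside a ball $B_{R_j}$, so the locally uniform convergence (from the equi-Lipschitz bound) becomes global, yielding $g^j_k\ge g^j_\infty-\eta^j_k$ with $\eta^j_k(x)\to0$ a.e.; this is then used to prove directly the $\Gamma$-convergence of $G^j_k$ to $G^j_\infty$. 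Hypothesis \eqref{hypo-z} becomes trivial since $f_k-g^j_k$ vanishes on $\{|\xi|\le j\}$. You instead take the trivial family $g^j_k\equiv f_\infty$, for which the $\Gamma$-convergence hypothesis of Theorem~\ref{main} is immediate (a constant, weakly lower semicontinuous functional $\Gamma$-converges to itself), and you shift the whole burden to verifying \eqref{hypo-z}; the equi-Lipschitz estimate (and quasiconvexity) then enter there, via uniform convergence on $\{|\xi|\le t\}$ and dominated convergence on the ball $B_\rho(x)$. Both proofs rest on the same two analytic facts (the local Lipschitz estimate for quasiconvex integrands of $p$-growth, and dominated convergence), but you invoke them to verify a different hypothesis of the same theorem. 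Your version is shorter and avoids constructing $h_j$; the paper's version has the minor advantage of not needing the dominated-convergence step inside \eqref{hypo-z}, but makes the same step when establishing $\Gamma$-convergence of $G^j_k$, so nothing is really saved. One cosmetic point: the conclusion you get from Theorem~\ref{main} is stated there as $\Gamma$-convergence of $F_k$ to $G_\infty$, the functional with integrand $g_\infty=\lim_j g^j_\infty$; with your choice this limit is $f_\infty$ by construction, which you correctly observe but could state explicitly.
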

 
 \begin{proof} For every $j$ we define $h_j$ as the convex envelope of  
 $$ 
 \begin{cases} 0 & \hbox{ if $|\xi|\le j$}\cr
 \beta(1+|\xi|^p) &\hbox{ if $|\xi|> j$.}
 \end{cases}
 $$
 Note that  there exists a constant $R_j>j$ such that $h_j(\xi)=\beta(1+|\xi|^p)$ if $|\xi|\ge R_j$.
 We define $g^j_k\in\FF$ as
 $
 g^j_k(x,\xi)= \max\{f_k(x,\xi), h_j(\xi)\}$,
 which converges pointwise to $g^j_\infty(x,\xi):=\max\{f_\infty(x,\xi), h_j(\xi)\}$ as $k\to+\infty$. Note that $g^j_\infty$ converge pointwise to $f_\infty$; hence, the corresponding integrals satisfy
 \begin{equation}\label{pointw}
 \lim_{j\to+\infty} G^j_\infty(u,U)= F_\infty(u,U)
 \end{equation} 
 for every $u\in W^{1,p}(\DD;\mathbb R^m)$ and $U\in\AU$.
 
 By the equi-Lipschitz condition \eqref{lipcond} satisfied by functions in $\FF$  we have
 \begin{equation}
 g^j_k(x,\cdot) \to g^j_\infty(x,\cdot) \hbox{ uniformly in }\overline {B_{R_j}}
 \end{equation}
 for almast all $x\in D$. 
Let $\eta^j_k$ be the non-negative functions defined by
 $$
 \eta^j_k(z)=\sup_{|\xi\le R_j}|g^j_k(x,\xi)-g^j_\infty(x,\xi)|.
 $$
Such functions satisfy $\eta^j_k(x)\le 2\beta(1+|R_j|^p)$, converge to $0$ for almost every $x$ and we have 
  \begin{equation}\label{gjxi}
 g^j_k(x,\xi) \ge g^j_\infty(x,\xi)-\eta^j_k(x)\hbox{ for almost every }x\in\DD \hbox{ and } |\xi|\le R_j.
 \end{equation}
 Since $g^j_k(x,\xi)=\beta(1+|\xi|^p)=g^j_\infty(x,\xi)$ for $|\xi|>R_j$ we conclude that \eqref{gjxi} holds for all $\xi$. Hence the corresponding integrals satisfy
  \begin{equation}\label{gjxi-f}
G^j_k(u,U) \ge G^j_\infty(u,U)-\int_U\eta^j_k\dx
 \end{equation} 
 for every $u\in W^{1,p}(\DD;\mathbb R^m)$ and $U\in\AU$.

Since $G^j_\infty(\cdot,U)$ is lower semicontinuous, this implies that $\Gamma$-$\liminf_{k\to+\infty} G^j_k(\cdot,U)\ge G^j_\infty(\cdot,U)$. Since the pointwise limit of $G^j_k(\cdot,U)$ is $G^j_\infty(\cdot,U)$ as $k\to+\infty$, we have the $\Gamma$-limsup inequality, and we obtain the $\Gamma$-convergence of $G^j_k(\cdot,U)$ to $G^j_\infty(\cdot,U)$.
 
 Trivially, $|f_k(x,\xi)-g^j_k(x,\xi)|=0$ if $|\xi|\le j$, so that \eqref{hypo-z} holds. By Theorem \ref{main} and \eqref{pointw} we can conclude that $F_k$ $\Gamma$-converge to $F_\infty$. \end{proof}

 \section{Homogenization problems}\label{hom-sec}
 In this section we consider homogenization problems, when the integrands $f_k$ can be obtained by scaling a single integrand defined on the whole space.

 \subsection{Deterministic homogenization}
 We suppose that
there exists $f\in\FF_{\alpha,\beta}(\mathbb R^d)$ and $\e_k$, a positive sequence tending to $0$, such that
 \begin{equation}
 f_k(x,\xi)= f\Big({x\over\e_k},\xi\Big).
 \end{equation}
In this case $f_k\in\FF$ and the functionals $F_k$ take the form
  \begin{equation}\label{Fekk}
F_k(u,U)=\int_U f\Big({x\over\e_k},\nabla u\Big)\dx.
 \end{equation}
We are interested in the case when $F_k(\cdot,U)$ has a $\Gamma$-limit $F_\infty(\cdot,U)$ for all $U\in\AU$ and the corresponding integrand $f_\infty$ does not depend on $x$  nor on the sequence $\e_k$. We then say that $f$ is {\em homogenizable}, and $F_\infty$ and $f_\infty$ are denoted by $F_{\rm hom}$ and $f_{\rm hom}$, respectively.

Theorem \ref{derivation} provides a characterization of homogenizability thanks to \eqref{derivfk}.
Indeed, by a change of variables $f$ is homogenizable if and only if for almost all $x\in\mathbb R^d$ the limit
\begin{eqnarray}\label{derivfk-hom}
\lim_{\rho\to 0} \lim_{k\to+\infty}{1\over |B_{\frac{\rho}{\e_k}}|}\min\bigg\{\int_{B_{\frac{\rho}{\e_k}}(\frac{x}{\e_k})} f (y,\nabla v )\dy
: v\in\ell_\xi +W^{1,p}_0(B_{\frac{\rho}{\e_k}}(\tfrac{x}{\e_k});\mathbb R^m)\bigg\}
\end{eqnarray}
exists for all $\xi\in\MM$ and is independent of $x$  and of the sequence $\e_k$. This holds in the particular case when $f$ is periodic in the first variable \cite{DM,BDF}.

We now consider a sequence of functions $g^j\in\FF_{\alpha,\beta}(\mathbb R^d)$ and the corresponding  \begin{equation}
 g^j_k(x,\xi)= g^j\Big({x\over\e_k},\xi\Big).
 \end{equation}
The related functions $\psi^j_k$ have the form $\psi^j_k(x,\xi)=\psi^j({x\over \e_k},\xi)$,
where 
\begin{equation}\label{spi=per}
\psi^j(y,\xi)= |f(y,\xi)- g^j(y,\xi)|,
 \end{equation}
 and by a change of variables hypothesis \eqref{hypo} becomes
   \begin{eqnarray}\label{hypo-4}
   \lim_{j\to+\infty}\limsup_{\rho\to0}\limsup_{k\to+\infty}{\e_k^d\over\rho^d}\int_{B_{\rho/\e_k}(x_0/\e_k)}\sup_{|\xi|\le t} \psi^j(y,\xi)\dy=0
\end{eqnarray}
for almost all $x_0$. This hypothesis is satisfied if
\begin{equation}\label{hypo-3} 
\lim_{j\to+\infty}\limsup_{R\to+\infty}{1\over R^d}\int_{B_{R}}\sup_{|\xi|\le t} \psi^j(y,\xi)\dy
=0
 \end{equation}
 for all $t>0$. This observation proves the following result, which can also be found in \cite{B1}.
 
  \begin{theorem}[Homogenizability by approximation]\label{th-hom-j} Let $f\in\FF_{\alpha,\beta}(\mathbb R^d)$, and let $g^j$ be a sequence in $\FF_{\alpha,\beta}(\mathbb R^d)$. Assume that for all $t>0$ 
  \begin{equation}\label{hypo-3c} \lim_{j\to+\infty}\limsup_{R\to+\infty}{1\over R^d}\int_{B_{R}}\sup_{|\xi|\le t} |f(y,\xi)- g^j(y,\xi)|\dy
=0,
 \end{equation}
 and that each $g^j$ is homogenizable. Then $f$ is homogenizable and  $$f_{\rm hom}(\xi)=\lim_{j\to+\infty}g^j_{\rm hom}(\xi)$$ for every $\xi\in\MM$. 
 \end{theorem}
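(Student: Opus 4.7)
The plan is to apply the closure Theorem \ref{main} to the rescaled integrands $f_k(x,\xi):=f(x/\e_k,\xi)$ and $g^j_k(x,\xi):=g^j(x/\e_k,\xi)$, where $\e_k\to 0^+$ is any fixed positive sequence. Let $F_k$ and $G^j_k$ denote the corresponding integral functionals.

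First, one hypothesis of Theorem \ref{main} is automatic: since each $g^j$ is homogenizable, for every $j$ the sequence $G^j_k(\cdot,U)$ $\Gamma$-converges to the functional with $x$-independent integrand $g^j_{\rm hom}(\xi)$, for every $U\in\AA$.

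The heart of the proof is the verification of the comparison hypothesis \eqref{hypo-z} from the global assumption \eqref{hypo-3c}. Setting $\psi^j(z,\xi):=|f(z,\xi)-g^j(z,\xi)|$ and performing the change of variables $z=y/\e_k$ in the inner integral, the expression inside the triple limit of \eqref{hypo-z} is rewritten as the average of $\sup_{|\xi|\le t}\psi^j(z,\xi)$ over the ball $B_{\rho/\e_k}(x/\e_k)$, which is exactly the form \eqref{hypo-4}. I would then enlarge via the inclusion $B_{\rho/\e_k}(x/\e_k)\subset B_{(\rho+|x|)/\e_k}(0)$, observe that $(\rho+|x|)/\e_k\to+\infty$ as $k\to+\infty$, and invoke \eqref{hypo-3c}: the $\limsup$ in $k$ is controlled by a constant times $\limsup_{R\to+\infty}\frac{1}{R^d}\int_{B_R}\sup_{|\xi|\le t}\psi^j(z,\xi)\,dz$, which vanishes as $j\to+\infty$.

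With \eqref{hypo-z} established, Theorem \ref{main} directly gives that the pointwise limit $g_\infty(\xi):=\lim_{j\to+\infty}g^j_{\rm hom}(\xi)$ exists for every $\xi\in\MM$ and that $F_k(\cdot,U)$ $\Gamma$-converges to the integral functional with integrand $g_\infty$. Since each $g^j_{\rm hom}$ does not depend on $x$, neither does $g_\infty$; this is exactly the statement that $f$ is homogenizable, with $f_{\rm hom}=g_\infty$.

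The main technical subtlety is the inclusion-and-enlargement step, where a geometric factor of the form $((\rho+|x|)/\rho)^d$ appears; the iterated-limit structure of \eqref{hypo-z} (innermost $k\to+\infty$, then $\rho\to 0$, then $j\to+\infty$), read against the double-limit form of \eqref{hypo-3c}, must be used carefully to absorb this factor and obtain the required vanishing for almost every $x\in\DD$.
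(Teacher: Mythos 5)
Your proposal follows exactly the same route the paper takes: rescale to $f_k(x,\xi)=f(x/\e_k,\xi)$, $g^j_k(x,\xi)=g^j(x/\e_k,\xi)$, change variables to rewrite the hypothesis of Theorem \ref{main} as an average of $\sup_{|\xi|\le t}\psi^j$ over the large balls $B_{\rho/\e_k}(x/\e_k)$ (this is exactly the paper's \eqref{hypo-4}), and then try to control that average by \eqref{hypo-3c}. So the structure is right, and the application of Theorem \ref{main} once \eqref{hypo-z} is verified is correct, including the observation that the limit integrand inherits $x$-independence from the $g^j_{\rm hom}$.

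However, the key reduction from \eqref{hypo-3c} to \eqref{hypo-z} for almost every $x$ is precisely where the argument is incomplete, and your last paragraph does not fix it: it only points at the difficulty. After the enlargement $B_{\rho/\e_k}(x/\e_k)\subset B_{(\rho+|x|)/\e_k}$, what one actually gets for $x\ne 0$ is
\[
\limsup_{k\to+\infty}\frac{\e_k^d}{\rho^d}\int_{B_{\rho/\e_k}(x/\e_k)}\sup_{|\xi|\le t}\psi^j(z,\xi)\,dz
\;\le\;
\Big(\frac{\rho+|x|}{\rho}\Big)^{d}\,
\limsup_{R\to+\infty}\frac{1}{R^d}\int_{B_R}\sup_{|\xi|\le t}\psi^j(z,\xi)\,dz
=:C_\rho\,a_j .
\]
The iterated-limit structure in \eqref{hypo-z} has $\limsup_{\rho\to 0}$ \emph{before} $\lim_{j\to+\infty}$, and $C_\rho\to+\infty$ as $\rho\to 0$ whenever $x\ne 0$; hence $\limsup_{\rho\to 0}(C_\rho a_j)=+\infty$ for each $j$ with $a_j>0$, and the outer $j$-limit cannot recover a zero. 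So the factor is \emph{not} absorbed by the given order of limits, contrary to what the final paragraph suggests; the argument as written establishes \eqref{hypo-z} only at $x=0$, which is not ``almost every $x$'' in the sense Theorem \ref{main} needs. (Nor does choosing the set $E$ help: $E$ is fixed independently of $j$ and must accumulate at $0$, so some sequence $\rho_n\to 0$ in $E$ still makes $C_{\rho_n}$ blow up.) To be fair, the paper itself states the implication ``\eqref{hypo-3} is satisfied, hence \eqref{hypo-4}'' without elaboration, so you are reproducing the paper's terse reasoning faithfully; but a reader (or referee) would need either a genuinely different estimate — e.g.\ a measure-theoretic argument isolating a null set of bad centers $x$, such as one based on the maximal function of the measures with densities $\sup_{|\xi|\le t}\psi^j(\cdot/\e_k,\xi)$ — or an explicit a priori reduction to the case $x=0$, to make this step rigorous. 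As it stands, this step is a gap in the proposal.
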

 
 In the particular case $g^j=g$ we have the following stability result as a corollary.
 
 \begin{corollary}[Stability of homogenizability]\label{stabhom} Let $f,g\in\FF_{\alpha,\beta}(\mathbb R^d)$. Assume that  
 for all $t>0$
 \begin{equation}\label{hypo-3b}\lim_{R\to+\infty}{1\over R^d}\int_{B_{R}}\sup_{|\xi|\le t} |f(y,\xi)- g(y,\xi)|\dy=0.
 \end{equation} Then $f$ is homogenizable if and only if $g$ is homogenizable, and in this case $g_{\rm hom}=f_{\rm hom}$.
 \end{corollary}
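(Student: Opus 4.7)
The plan is to deduce the corollary directly from Theorem~\ref{th-hom-j} by specializing to a constant approximating sequence, and to exploit the symmetry of the hypothesis \eqref{hypo-3b} under the exchange $f \leftrightarrow g$, since $|f(y,\xi)-g(y,\xi)| = |g(y,\xi)-f(y,\xi)|$.

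First I would prove the implication ``$g$ homogenizable $\Rightarrow$ $f$ homogenizable with $f_{\rm hom} = g_{\rm hom}$''. For this, set $g^j := g$ for every $j$. Then hypothesis \eqref{hypo-3c} of Theorem~\ref{th-hom-j} is exactly \eqref{hypo-3b}, which holds by assumption (the outer $\lim_{j}$ is trivial since the inner quantity is independent of $j$). Each $g^j = g$ is homogenizable by hypothesis, so Theorem~\ref{th-hom-j} applies and yields that $f$ is homogenizable with
$$
f_{\rm hom}(\xi) \;=\; \lim_{j\to+\infty} g^j_{\rm hom}(\xi) \;=\; g_{\rm hom}(\xi) \qquad \text{for every } \xi \in \MM.
$$

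Next I would prove the reverse implication, ``$f$ homogenizable $\Rightarrow$ $g$ homogenizable with $g_{\rm hom} = f_{\rm hom}$''. Here I swap the roles of $f$ and $g$ in the argument: take as approximating sequence for $g$ the constant family $f^j := f$. Because \eqref{hypo-3b} is symmetric in $f$ and $g$, the corresponding version of \eqref{hypo-3c} (with $f$ and $g$ interchanged) is satisfied; since each $f^j = f$ is homogenizable, Theorem~\ref{th-hom-j} gives that $g$ is homogenizable and $g_{\rm hom}(\xi) = \lim_{j\to+\infty} f^j_{\rm hom}(\xi) = f_{\rm hom}(\xi)$.

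Since the corollary is essentially a verbatim specialization of Theorem~\ref{th-hom-j} to a stationary approximating sequence, no genuine obstacle arises; the only point worth emphasising is that the symmetric nature of the $L^1_{\rm loc}$-type condition \eqref{hypo-3b} is precisely what allows the theorem to be invoked in \emph{both} directions, thereby upgrading the one-way statement of Theorem~\ref{th-hom-j} to an equivalence.
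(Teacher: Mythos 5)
Your proof is correct and takes essentially the same route as the paper, which derives the corollary as the particular case $g^j=g$ of Theorem~\ref{th-hom-j}. Your explicit remark that the symmetry of \eqref{hypo-3b} in $f$ and $g$ is what allows the theorem to be applied in both directions is precisely the implicit content of the paper's ``if and only if''.
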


 The dependence of $g^j$ on $j$ in Theorem \ref{stabhom} is justified by some homogenization results for weakly almost-periodic functionals, which are obtained by approximation, as in the following example.
\begin{example}[Almost-periodic homogenization]\rm 
Theorem 4.1 can be used to obtain a homogenization theorem for quadratic forms $f(y,\xi)=a(y)|\xi|^2$ with $a$ almost periodic in the mean; that is, such that there exist trigonometric polynomials $P_j$ satisfying 
\begin{equation}\label{app-qp-coeff}
\lim_{j\to+\infty} \lim_{R\to+\infty}{1\over R^d} \int_{B_R} |P_j(y)- a(y)|\dy=0.
\end{equation}
Then we can define $g^j(y,\xi)= ((P_j(y)\vee\alpha)\wedge\beta)|\xi|^2$, whose coefficient is uniformly almost-periodic and hence homogenizable  (see e.g.~\cite{Koz1,B0}). By construction $g^j$ satisfy \eqref{hypo-3c}, so that we conclude the homogenizability of $f$.

More in general, Theorem 4.1 can be used to obtain a homogenization theorem for 
functions $f\in\FF_{\alpha,\beta}(\mathbb R^d)$ with the property that 
 for every $\xi\in\MM$ there exists a sequence of trigonometric polynomials $P^\xi_j$ such that
\begin{equation}\label{app-qp}
\lim_{j\to+\infty} \lim_{R\to+\infty}{1\over R^d} \int_{B_R} |P^\xi_j(y)- f(y,\xi)|\dy=0.
\end{equation}

Indeed, if such a condition holds we can construct a family $g^j$ such that \eqref{hypo-3c} holds and $g^j(\cdot,\xi)$ is almost periodic uniformly with respect to $\xi$ (that is, with almost periods that are independent of $\xi$) (see \cite{B1}), and hence homogenizable  (see \cite{B0}). For details on homogenization of uniformly almost-periodic functions and the construction of $g^j$ we refer to \cite[Chapters 15 and 17]{BDF}, respectively.
\end{example}

 \subsection{Stochastic homogenization}\label{stock}

 Let $({\OO},\mathcal T,P)$ be a probability space; let $\FF_{\rm stoc}$ be the collection of random integrands $f=f(\omega, x,\xi):\OO\times \mathbb R^d\times\mathbb M^{m\times d}\to\mathbb R$ satisfying the following properties:
 
 (i) $f$ is $\mathcal T\times \mathcal B(\mathbb R^d)\times \mathcal B(M^{m\times d})$-measurable;
 
 (ii) for all $\omega\in\OO$ we have $f(\omega,\cdot,\cdot)\in\FF_{\alpha,\beta}(\mathbb R^d)$.
 
 \smallskip
 
 We suppose that ${\OO}$ is equipped with a group of $P$-preserving transformations $\tau_z:{\OO}\to{\OO}$ labelled by $z\in \mathbb Z^d$.
 An integrand $f\in \FF_{\rm stoc}$ is said to be {\em stochastically periodic} if 
 $$
 f(\omega,x+z,\xi)=  f(\tau_z(\omega),x,\xi) \hbox{ for all }(\omega,z,x,\xi)\in {\OO}\times \mathbb Z^d\times\mathbb R^d\times\mathbb M^{m\times d}.
 $$
 The group $(\tau_z)_{z\in\mathbb Z^d}$ is said to be {\em ergodic} if every set $E\in \mathcal T$ such that $\tau_z(E)=E$ for every $z$ satisfies $P(E)=0$ or $P(E)=1$.

 If $f\in \FF_{\rm stoc}$  then we use the notation $f_\e=f(\omega,x/\e,\xi)$ and $F_\e(\omega,u,U)$ for the corresponding functional.
 The following stochastic homogenization results has been proved in \cite{DalMasoModica}.
 
 \begin{theorem}\label{DMM-th}
 If $f$ is stochastically periodic then for almost all $\omega$ there exists $f_{\rm hom}\in \FF_{\rm stoc}$, with $f_{\rm hom}=f_{\rm hom}(\omega,\xi)$ independent of $x$, such that $F_\e(\omega,\cdot,U)$ $\Gamma$-converge to $F_{\rm hom}(\omega,\cdot,U)$ corresponding to $f_{\rm hom}$ for every $U$. Furthermore, if $(\tau_z)_{z\in\mathbb Z^d}$ is also ergodic, then $f_{\rm hom}=f_{\rm hom}(\xi)$ is $P$-almost everywhere independent of $\omega$.
\end{theorem}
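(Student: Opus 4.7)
The plan is to combine the abstract $\Gamma$-convergence machinery from Section~\ref{pre-sec} (in particular the characterization of $\Gamma$-limits via cell-problem minima in Theorem~\ref{derivation}) with a subadditive ergodic theorem of Akcoglu--Krengel type. The idea is that, after the change of variables $y=x/\varepsilon$, the cell-problem expression in \eqref{derivfk-hom} becomes
\[
\frac{1}{|B_{\rho/\varepsilon_k}|}\,\mu_\xi\bigl(\omega,B_{\rho/\varepsilon_k}(x/\varepsilon_k)\bigr),
\qquad
\mu_\xi(\omega,A):=\min\Bigl\{\int_A f(\omega,y,\nabla v)\,dy:v\in\ell_\xi+W^{1,p}_0(A;\mathbb R^m)\Bigr\},
\]
so proving homogenizability reduces to showing that $|tA|^{-1}\mu_\xi(\omega,tA)$ admits an a.s.\ limit as $t\to+\infty$, independent of the base set~$A$ and of the centre~$x$.

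First I would verify that $\mu_\xi(\omega,\cdot)$ is a subadditive set function on the family of bounded open sets. The key point is that if $A_1,\dots,A_N$ are disjoint open sets with $|A\setminus\bigcup_j A_j|=0$, a competitor for $\mu_\xi(\omega,A)$ can be obtained by concatenating minimizers $v_j\in\ell_\xi+W^{1,p}_0(A_j;\mathbb R^m)$ on each $A_j$, since the common affine trace $\ell_\xi$ on the boundaries guarantees that the glued map is in $\ell_\xi+W^{1,p}_0(A;\mathbb R^m)$. This yields $\mu_\xi(\omega,A)\le\sum_j\mu_\xi(\omega,A_j)$. The stochastic periodicity hypothesis directly gives stationarity $\mu_\xi(\omega,A+z)=\mu_\xi(\tau_z\omega,A)$ for $z\in\mathbb Z^d$, and the $p$-growth bound \eqref{(16.1)} provides the integrability needed to apply the multiparameter subadditive ergodic theorem. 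This produces, for each fixed $\xi$, a full-measure set $\Omega_\xi$ on which $|tQ|^{-1}\mu_\xi(\omega,tQ)\to f_{\rm hom}(\omega,\xi)$ for every half-open cube $Q$.

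Next I would upgrade the convergence from one $\xi$ to all $\xi$ simultaneously, and from cubes to balls. For the former, one intersects $\Omega_\xi$ over a countable dense set of $\xi\in\mathbb M^{m\times d}$; the equi-Lipschitz continuity in $\xi$ of the cell-problem minima (a standard consequence of the $p$-growth and quasiconvexity assumptions built into $\mathcal F$) then propagates the convergence to every $\xi$, with $f_{\rm hom}(\omega,\cdot)$ inheriting membership in $\mathcal F_{\alpha,\beta}$. Using the fact that balls can be approximated from inside and outside by unions of small cubes and invoking again subadditivity together with $p$-growth, the same limit is obtained over the balls $B_{\rho/\varepsilon_k}(x/\varepsilon_k)$ appearing in \eqref{derivfk-hom}, independently of~$x$. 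By Theorem~\ref{derivation}, this forces $F_\varepsilon(\omega,\cdot,U)$ to $\Gamma$-converge to the functional with integrand $f_{\rm hom}(\omega,\xi)$ for every $U\in\mathcal U$, with the centre-independence from the ergodic theorem guaranteeing that the limit does not depend on~$x$.

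For the ergodic part, the stationarity $\mu_\xi(\omega,A+z)=\mu_\xi(\tau_z\omega,A)$ combined with the scaling invariance of the limit shows that $f_{\rm hom}(\tau_z\omega,\xi)=f_{\rm hom}(\omega,\xi)$ for every $z\in\mathbb Z^d$, so $\omega\mapsto f_{\rm hom}(\omega,\xi)$ is a $(\tau_z)$-invariant random variable; ergodicity then forces it to be $P$-a.s.\ equal to a deterministic constant $f_{\rm hom}(\xi)$, and intersecting the corresponding full-measure sets over a countable dense set of $\xi$ (again using equi-Lipschitz continuity) gives a single null set outside of which $f_{\rm hom}$ is deterministic. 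I expect the main obstacle to be a careful verification of the subadditivity on arbitrary disjoint partitions (rather than just axis-aligned cubes) and the corresponding measurability of $\omega\mapsto\mu_\xi(\omega,A)$, which requires selecting minimizers in a jointly measurable way; once this is done, the passage from cubes to balls and the transfer across $\xi$ are routine compactness and density arguments.
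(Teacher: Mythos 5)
The paper does not prove Theorem~\ref{DMM-th}: it is stated as a known result with a citation to \cite{DalMasoModica}, and your proposal is essentially a reconstruction of the strategy of that reference. You correctly identify the key steps: introduce the stationary subadditive set process $\mu_\xi(\omega,\cdot)$, invoke the Akcoglu--Krengel subadditive ergodic theorem, propagate the almost sure convergence from a countable dense set of $\xi$ to all $\xi$ via equi-Lipschitz estimates and from cubes to balls by inner and outer approximation, and pass through Theorem~\ref{derivation} to convert convergence of cell-problem minima into $\Gamma$-convergence. One point deserves more emphasis than your sketch gives it: the $x$-independence of the limit, which is required for $f_{\rm hom}(\omega,\xi)$ to be independent of $x$ already in the non-ergodic case, is not a direct output of the basic subadditive ergodic theorem, which treats expanding families anchored at the origin. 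In the cell formula \eqref{derivfk-hom} the centres $x/\e_k$ drift off to infinity proportionally to the radius $\rho/\e_k$, so one needs the refinement established in \cite{DalMasoModica} that for a stationary subadditive process the normalized value over $tA+tz$ converges a.s.\ as $t\to+\infty$ to a limit independent of both the reference set $A$ and the shift $z$. Once that lemma is in place, the rest of your argument, including the ergodicity step and the measurability of $\omega\mapsto\mu_\xi(\omega,A)$ (for which taking the infimum over a countable dense family of competitors suffices, without measurable selection of minimizers), is sound.
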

 
A first result for stochastic integrands is the following theorem, which can be obtained from Theorem {\rm\ref{DMM-th}} by applying Corollary \ref{stabhom} to $P$-almost all $\omega\in {\OO}$.

   \begin{theorem}Let $f, g\in \FF_{\rm stoc}$. Assume that $f$ is stochastically periodic,  and assume that
\begin{equation}\label{hypo-3c-stoc} \lim_{R\to+\infty}{1\over R^d}\int_{B_{R}}\sup_{|\xi|\le t} |f(\omega,y,\xi)- g(\omega, y,\xi)|\dy
=0
 \end{equation}
for all $t>0$ and for $P$-almost all $\omega\in{\OO}$. If $G_\e(\omega,\cdot,U)$ is the functional with integrand $g_\e=g(\omega,x/\e,\xi)$, then 
 $G_\e(\omega,\cdot,U)$ $\Gamma$-converge to   $F_{\rm hom}(\omega,\cdot,U)$ given by Theorem {\rm\ref{DMM-th}} for every $U\in\AU$ and $P$-almost all $\omega\in{\OO}$.
\end{theorem}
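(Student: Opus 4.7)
The plan is to reduce the statement to a pointwise (in $\omega$) application of the deterministic stability Corollary \ref{stabhom}. Both the stochastic homogenization Theorem \ref{DMM-th} and the standing hypothesis \eqref{hypo-3c-stoc} hold only $P$-almost surely, so the first step is to produce a single set $\OO_0\subset\OO$ of full probability on which both conclusions are simultaneously available.

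By Theorem \ref{DMM-th} there is a set $\OO_1\in\mathcal T$ with $P(\OO_1)=1$ such that for every $\omega\in\OO_1$ the functional $F_\e(\omega,\cdot,U)$ $\Gamma$-converges to $F_{\rm hom}(\omega,\cdot,U)$ for every $U\in\AA$; in other words, $f^\omega:=f(\omega,\cdot,\cdot)\in\FF_{\alpha,\beta}(\mathbb R^d)$ is homogenizable in the sense of Section~\ref{hom-sec}, with $f^\omega_{\rm hom}(\xi)=f_{\rm hom}(\omega,\xi)$. By hypothesis, for each $t>0$ we also find a $P$-full-measure set on which \eqref{hypo-3c-stoc} holds; since $\sup_{|\xi|\le t}|f(\omega,y,\xi)-g(\omega,y,\xi)|$ is monotone non-decreasing in $t$, it suffices to intersect these sets over a countable collection of values of $t$ (say $t\in\mathbb N$), yielding a set $\OO_2$ with $P(\OO_2)=1$ on which \eqref{hypo-3c-stoc} holds for every $t>0$ simultaneously. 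Set $\OO_0=\OO_1\cap\OO_2$, so that $P(\OO_0)=1$.

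Fix $\omega\in\OO_0$ and let $g^\omega:=g(\omega,\cdot,\cdot)$, which belongs to $\FF_{\alpha,\beta}(\mathbb R^d)$ by item~(ii) in the definition of $\FF_{\rm stoc}$. The pair $(f^\omega,g^\omega)$ then satisfies exactly the hypotheses of Corollary \ref{stabhom}: both integrands lie in $\FF_{\alpha,\beta}(\mathbb R^d)$, $f^\omega$ is homogenizable, and condition \eqref{hypo-3b} is precisely \eqref{hypo-3c-stoc} evaluated at the chosen $\omega$. Corollary \ref{stabhom} then asserts that $g^\omega$ is homogenizable with $g^\omega_{\rm hom}=f^\omega_{\rm hom}$, which is exactly the $\Gamma$-convergence of $G_\e(\omega,\cdot,U)$ to $F_{\rm hom}(\omega,\cdot,U)$ claimed in the statement.

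There is essentially no real obstacle: the argument is a pure composition of two previously established results. The only mildly delicate point is the measurability/intersection bookkeeping needed to make the hypothesis "for all $t>0$" and the conclusion of Theorem \ref{DMM-th} hold on one and the same full-measure set, and this is settled by the monotonicity in $t$ observed above.
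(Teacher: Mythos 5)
Your proof is correct and takes precisely the approach the paper indicates: the paper's only ``proof'' is the one sentence preceding the statement, which says the result ``can be obtained from Theorem \ref{DMM-th} by applying Corollary \ref{stabhom} to $P$-almost all $\omega\in\OO$.'' Your careful bookkeeping of the two full-measure sets, and in particular the monotonicity-in-$t$ observation used to replace the uncountable quantifier by a countable one, is a correct elaboration of that sketch.
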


The pointwise condition \eqref{hypo-3c-stoc} can be replaced by the convergence to zero of the corresponding expectations. In this case the result is much weaker, since the set of probability zero that is excluded may depend on the   sequences $\e_k$ and $\e_{k_j}$ we are considering.  

\begin{theorem}\label{stockholm}  Let $f, g\in \FF_{\rm stoc}$. Assume that $f$ is stochastically periodic,  and assume that
 \begin{equation}\label{hypo-3b-stoc} \lim_{R\to+\infty}{1\over R^d}\int_{\OO}
 \bigg(\int_{B_{R}}\sup_{|\xi|\le t} |f(\omega,y,\xi)- g(\omega, y,\xi)|\dy\bigg)\,d P(\omega)
=0
 \end{equation}
 for all $t>0$.  Then for every sequence  $\e_k\to0^+$ there exists a subsequence  $\e_{k_j}$ such that $G_{\e_{k_j}}(\omega,\cdot,U)$   $\Gamma$-converges   to $F_{\rm hom}(\omega,\cdot,U)$   for $P$-almost all $\omega\in{\OO}$ and for every $U\in\AU$ .
 \end{theorem}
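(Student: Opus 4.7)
The plan is to reduce the expectation hypothesis \eqref{hypo-3b-stoc} to the pointwise-in-$\omega$ condition required by Corollary~\ref{Corgen}, via a subsequence extraction, and then to conclude by the Urysohn characterization of $\Gamma$-convergence from Theorem~\ref{comp}. The first ingredient is a Fubini/rescaling identity: for any ball $B_\rho(x)\subset\DD$ and any $t>0$,
\begin{equation*}
\int_\OO\!\left(\frac{1}{|B_\rho|}\int_{B_\rho(x)}\sup_{|\xi|\le t}|f-g|(\omega,y/\e_k,\xi)\dy\right)\!dP(\omega)=\frac{\e_k^d}{|B_\rho|}\int_{B_{\rho/\e_k}(x/\e_k)}\Psi^t(z)\,dz,
\end{equation*}
where $\Psi^t(z):=\int_\OO\sup_{|\xi|\le t}|f-g|(\omega,z,\xi)\,dP(\omega)$; since $B_{\rho/\e_k}(x/\e_k)\subset B_{(\rho+|x|)/\e_k}(0)$, this tends to $0$ as $k\to+\infty$ by \eqref{hypo-3b-stoc}.

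Starting from any subsequence of $\e_k$ and a countable dense family $\{(x_i,\rho_j,t_j)\}\subset\DD\times(0,+\infty)\times\mathbb N$, I would next extract by a Cantor diagonal process a further subsequence (not relabelled) along which the non-negative function $(x,\omega)\mapsto\tfrac{1}{|B_{\rho_j}|}\int_{B_{\rho_j}(x)}\sup_{|\xi|\le t_j}|f-g|(\omega,y/\e_k,\xi)\dy$ tends to $0$ in $L^1(\DD\times\OO,dx\otimes dP)$, and hence $(dx\otimes dP)$-almost everywhere, for every index $j$. By Fubini this yields, for $P$-a.s.\ $\omega$, convergence to $0$ for almost every $x\in\DD$ and every $(\rho_j,t_j)$. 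Fix such an $\omega$, chosen also in the full-measure event where $F_{\e_k}(\omega,\cdot,U)$ $\Gamma$-converges to $F_{\rm hom}(\omega,\cdot,U)$ for every $U\in\AA$ (Theorem~\ref{DMM-th}). Then Corollary~\ref{Corgen} applied with $f_k(x,\xi):=f(\omega,x/\e_k,\xi)$, $g_k(x,\xi):=g(\omega,x/\e_k,\xi)$, and $E:=\{\rho_j\}$ (so that $0\in\overline E$)---together with the monotonicity in $t$ of $\sup_{|\xi|\le t}|f-g|$ to pass from the countable $t_j$'s to all $t>0$---gives that $G_{\e_k}(\omega,\cdot,U)$ $\Gamma$-converges to $F_{\rm hom}(\omega,\cdot,U)$ for every $U\in\AA$ along the extracted subsequence.

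Finally, I would close the argument by the Urysohn characterization of $\Gamma$-convergence from Theorem~\ref{comp} applied pointwise in $\omega$: since every subsequence of the original family $\e_k$ admits a further subsequence along which $G_{\e_k}(\omega,\cdot,U)$ $\Gamma$-converges to $F_{\rm hom}(\omega,\cdot,U)$ $P$-a.s., we conclude that $G_\e(\omega,\cdot,U)$ $\Gamma$-converges to $F_{\rm hom}(\omega,\cdot,U)$ for every $U\in\AA$ and $P$-a.s.\ $\omega$. The main technical point I expect is reconciling the ``almost every $x\in\DD$'' quantifier in Corollary~\ref{Corgen} with the subsequence extraction; this is handled by performing the diagonal extraction directly in the product space $\DD\times\OO$ so that Fubini delivers the required pointwise-in-$\omega$ statement over a full-measure set of $x$.
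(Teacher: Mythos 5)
Your proof follows essentially the same route as the paper's: rescale the expectation hypothesis to obtain $L^1(P)$-convergence of the Corollary~\ref{Corgen} quantity, pass to an a.e.-convergent subsequence, diagonalize over a countable index set, apply Corollary~\ref{Corgen} pointwise in $\omega$, and close with the Urysohn property. The only difference is one of detail, and to your credit: you perform the diagonal extraction over a countable family of triples $(x_i,\rho_j,t_j)$ and in the product space $\DD\times\OO$ so that Fubini delivers the ``a.e.\ $x$'' quantifier required by Corollary~\ref{Corgen}, whereas the paper diagonalizes explicitly only over the countable set $E$ of radii and leaves the treatment of $x_0$ and $t$ implicit.
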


\begin{proof} We fix a sequence $\e_k\to0^+$. 
In analogy with the notation in the previous sections, we set
$$ \psi(\omega,y,\xi):=|f(\omega,y,\xi)- g(\omega, y,\xi)|.
$$
With fixed $  n\in \N$, we set $R_k=  n /\e_k$, and observe that \eqref{hypo-3b-stoc} implies  that  
  \begin{eqnarray}\label{hypo-71}
\lim_{k\to+\infty}{1\over R_k^d}\int_{\OO}\bigg(\int_{  B_{R_k}}\sup_{|\xi|\le t} \psi(\omega,y,\xi)\dy \bigg)\,dP(\omega)
=0 .
\end{eqnarray}
By the change of variable $y=\frac{x}{\e_k}$ we then also have 
  \begin{eqnarray}\label{hypo-72}
\lim_{k\to+\infty}{1\over   n^d}\int_{\OO}\bigg(\int_{  B_{n}}\sup_{|\xi|\le t} \psi\Big(\omega,\frac{x}{\e_k},\xi\Big)\dx\bigg)\,dP(\omega)
=0.
\end{eqnarray}
Hence, there exists a subsequence $\e_{k_j}$ such that 
  \begin{eqnarray}\label{hypo-73}
\lim_{j\to+\infty}\int_{  B_{n}}\sup_{|\xi|\le t} \psi\Big(\omega,\frac{x}{\e_{k_j}},\xi\Big)\dx
=0
\end{eqnarray}
for $P$-almost every $\omega\in{\OO}$. 
  By a diagonal argument   we can suppose that the subsequence $\e_{k_j}$ does not depend on $  n\in\N$. 
We now fix $\omega\in {\OO}$ such that \eqref{hypo-73} holds for every $  n\in\N$. 
  For every $x_0\in \R^d$ and every $\rho>0$ there exists $n\in\N$ such that $B_\rho(x_0)\subset B_n$. By 
 \eqref{hypo-73} this implies that   
$$
\lim_{j\to+\infty}{1\over \rho^d}\int_{B_{\rho}(x_0)}\sup_{|\xi|\le t} \psi\Big(\omega,\frac{x}{\e_{k_j}},\xi\Big)\dx=0.$$
By Corollary \ref{Corgen} we then deduce that $G_{\e_{k_j}}(\omega,\cdot,U)$ $\Gamma$-converge to $F_{\rm hom}(\omega,\cdot,U)$.
\end{proof}

\section{$G$-convergence}\label{G-conv}
The results of the previous sections can be translated into results about the $G$-convergence of elliptic operators according to the following definition. A more direct approach, but based on the same ideas, will be described in the next section. For simplicity of exposition in this and the next sections we treat only problems for scalar-valued functions, but all results remain valid, with obvious modifications, for vector-valued $u$. 

\smallskip
We assume that $\DD$ is an open set of $\mathbb R^d$, and $ {\mathbb M}_{\rm sym}^{d\times d}$  denotes the space of symmetric $d\times d$ matrices.
Given $\alpha,\beta>0$ with $\alpha\le \beta$,  we let $\mathcal M^{\rm sym}=\mathcal M^{\rm sym}_{\alpha,\beta}(\DD)$ denote the family of all measurable $A\colon\DD\to {\mathbb M}_{\rm sym}^{d\times d}$, such that 
\begin{equation}\label{stime-G}
\alpha |\xi|^2\le \langle A(x)\xi,\xi\rangle\le \beta|\xi|^2\qquad\hbox{ for almost all $x\in\DD$ and all $\xi\in\mathbb R^d$.}
\end{equation} 

\begin{definition} Let $D$ be a bounded open set in $\mathbb R^d$, let $A_k$ be a sequence in $\mathcal M^{\rm sym}$, and let $A\in\mathcal M^{\rm sym}$. We say that $A_k$ {\em G-converges} to $A$ if for all $\phi\in H^{-1}(\DD)$ the sequence of solutions $u_k$ in the sense of distribution of 
$$
\begin{cases} -{\rm div}(A_k\nabla u_k)=\phi \\
u_k\in H^1_0(\DD)
\end{cases}
$$
converge weakly in $H^1_0(\DD)$ to the solution $u$ of 
$$
\begin{cases} -{\rm div}(A\nabla u)=\phi \\
u\in H^1_0(\DD).
\end{cases}
$$
\end{definition}
For the connection between $G$-convergence and spectral convergence of the elliptic operators we refer to \cite[Teorema 4.1]{BoMa} (see also \cite[Section 3.9.1]{Attouch}).
The link between $G$-convergence and $\Gamma$-convergence is given by the following theorem (see \cite[Section 22]{DM}).

\begin{theorem}\label{equiv-G-Gamma} Let $D$ be a bounded open set in $\mathbb R^d$, let $A_k, A\in \mathcal M^{\rm sym}$, and for every $U\in\AU$ let $F_k(\cdot, U), F(\cdot, U)$ be the quadratic functionals defined in $H^1(D)$ by
$$
F_k(u,U)=\int_U\langle A_k(x)\nabla u,\nabla u\rangle\dx,\qquad F(u,U)=\int_U\langle A(x)\nabla u,\nabla u\rangle\dx.
$$
Then the following conditions are equivalent.

{\rm(a)} $A_k$ G-converges to $A$;

{\rm(b)} $F_k(\cdot, U)$ $\Gamma$-converges to $F(\cdot, U)$ for every open set $U\in\AU$. 

\end{theorem}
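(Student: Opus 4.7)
My plan is to prove the two directions separately. The implication (b)$\Rightarrow$(a) will follow from the variational characterization of solutions of elliptic equations together with convergence of minimizers under $\Gamma$-convergence, while (a)$\Rightarrow$(b) will be obtained by a compactness-plus-uniqueness argument combining Theorem~\ref{comp}, the direction (b)$\Rightarrow$(a), and uniqueness of the G-limit.

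For (b)$\Rightarrow$(a), I would first observe that, given $\phi\in H^{-1}(\DD)$, the solution $u_k$ of the equation with coefficients $A_k$ is the unique minimizer on $H^1_0(\DD)$ of the perturbed functional
\[
\Phi_k(u)=\tfrac12 F_k(u,\DD)-\langle \phi,u\rangle,
\]
and analogously $u$ minimizes $\Phi(u)=\tfrac12 F(u,\DD)-\langle \phi,u\rangle$. Because $u\mapsto -\langle\phi,u\rangle$ is continuous in the weak topology of $H^1_0(\DD)$, the assumed $\Gamma$-convergence $F_k(\cdot,\DD)\to F(\cdot,\DD)$ lifts to $\Phi_k\to\Phi$. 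The coercivity $F_k(u,\DD)\ge\alpha\|\nabla u\|_{L^2}^2$ makes the family $\{\Phi_k\}$ equi-coercive on $H^1_0(\DD)$; the fundamental theorem of $\Gamma$-convergence then forces every weak cluster point of $u_k$ to be a minimizer of $\Phi$. Since $\Phi$ is strictly convex, the minimizer is unique and an Urysohn argument yields $u_k\rightharpoonup u$, which is exactly G-convergence.

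For (a)$\Rightarrow$(b), I would invoke Theorem~\ref{comp} with $p=2$: every subsequence of $F_k(\cdot,U)$ admits a further subsequence $F_{k_j}(\cdot,U)$ that $\Gamma$-converges, for all $U\in\AA$, to some $F_\infty(\cdot,U)=\int_U f_\infty(x,\nabla u)\dx$ with $f_\infty\in\FF$. The delicate step is to identify $f_\infty$ as a quadratic form associated to a matrix $A_\infty\in\mathcal M^{\rm sym}$. Since each $f_k(x,\xi)=\langle A_k(x)\xi,\xi\rangle$ is $2$-homogeneous in $\xi$, the derivation formula \eqref{derivfk} forces $f_\infty(x,\cdot)$ to be $2$-homogeneous as well; combined with convexity and the bounds \eqref{(16.1)}, this yields $f_\infty(x,\xi)=\langle A_\infty(x)\xi,\xi\rangle$ for a measurable symmetric matrix field $A_\infty$ satisfying \eqref{stime-G} (symmetry being arranged by replacing $A_\infty$ with its symmetric part, which leaves the quadratic form unchanged).

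With the $\Gamma$-limit recognized as quadratic with coefficient $A_\infty\in\mathcal M^{\rm sym}$, the already-established direction (b)$\Rightarrow$(a) gives $A_{k_j}\to A_\infty$ in the G-sense. By hypothesis the whole sequence G-converges to $A$, and uniqueness of the G-limit (immediate, since it is determined by the limits of solutions for all $\phi\in H^{-1}(\DD)$) forces $A_\infty=A$ a.e. Thus every subsequence of $F_k(\cdot,U)$ admits a further subsequence $\Gamma$-converging to $F(\cdot,U)$, and the Urysohn property stated in Theorem~\ref{comp} upgrades this to $\Gamma$-convergence of the full sequence for every $U\in\AA$. The main obstacle in this chain is the identification step, in which one must recognize the $\Gamma$-limit integrand as a symmetric-matrix quadratic form rather than merely an arbitrary element of the broader quasiconvex class $\FF$.
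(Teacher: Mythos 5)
Your scheme for (b)$\Rightarrow$(a) is essentially sound: pass to the perturbed functionals $\Phi_k=\tfrac12 F_k(\cdot,\DD)-\langle\phi,\cdot\rangle$, use stability of $\Gamma$-convergence under continuous perturbations, equi-coercivity, and strict convexity. One point you should not gloss over is that the hypothesis is $\Gamma$-convergence of $F_k(\cdot,U)$ on $H^1(\DD)$ with the weak topology and \emph{without} boundary conditions, whereas your minimization takes place on $H^1_0(\DD)$. Recovery sequences for an element of $H^1_0(\DD)$ supplied by the unconstrained $\Gamma$-limit need not lie in $H^1_0(\DD)$; to transfer the $\Gamma$-convergence to the Dirichlet-constrained problem you must invoke a matching-of-boundary-data (cut-off/fundamental estimate) argument, precisely the kind used in \cite[Chapter~21]{DM} and in Remark~\ref{unb-comp}. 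This is a real step, not cosmetic.

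The more serious gap is in (a)$\Rightarrow$(b), in the identification of the $\Gamma$-limit integrand $f_\infty$. You assert that $2$-homogeneity of $f_\infty(x,\cdot)$ together with convexity and the bounds \eqref{(16.1)} forces $f_\infty(x,\xi)=\langle A_\infty(x)\xi,\xi\rangle$. This is false: in $d=2$ the function $\xi\mapsto\tfrac12(|\xi_1|+|\xi_2|)^2$ is convex, positively $2$-homogeneous, and pinched between $\tfrac12|\xi|^2$ and $|\xi|^2$, yet it is not a quadratic form. What is actually needed is the parallelogram identity. The route used in \cite[Section~22]{DM} is to show that the class of quadratic integral functionals is closed under $\Gamma$-convergence by verifying that the parallelogram law
$F(u+v,U)+F(u-v,U)=2F(u,U)+2F(v,U)$
passes to the $\Gamma$-limit (the two halves of the identity follow from the $\liminf$ and recovery-sequence properties, respectively); alternatively, one can observe directly that the normalized minimum values
$m_k(\rho,\xi)=\min\{F_k(v,B_\rho(x)):v\in\ell_\xi+W^{1,2}_0(B_\rho(x))\}$
are themselves quadratic forms in $\xi$ — this follows from the parallelogram law for $F_k$ and the structure of the constraint set — so that \eqref{derivfk} exhibits $f_\infty(x,\cdot)$ as a pointwise limit of quadratic forms, hence quadratic. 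Either way, the parallelogram law is the missing ingredient; $2$-homogeneity by itself does not close the argument. Once $f_\infty$ is recognized as a symmetric quadratic form in $\mathcal M^{\rm sym}$, your compactness-plus-uniqueness-of-G-limit argument via the Urysohn property is correct and matches the standard route.
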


From this equivalence, Theorem \ref{main} translates into the following result, after remarking that  for $B^j_\infty,B_\infty\in {\mathbb M}_{\rm sym}^{d\times d}$
the convergence of $\langle B^j_\infty\xi,\xi\rangle$ to $\langle B_\infty\xi,\xi\rangle$ for all $\xi$ is equivalent to the convergence of $B^j_\infty$ to $B_\infty$ by the polarization identity.

  \begin{theorem}\label{main-G}  Let $D$ be a bounded open set in $\mathbb R^d$ and
let $A_k, B^j_k$ be matrices in $\mathcal M^{\rm sym}$.
Assume that  for almost all $x\in\DD$ we have
\begin{equation}\label{hypo-z-G}
\lim_{j\to+\infty}\limsup_{\rho\to0}\limsup_{k\to+\infty}{1\over\rho^d}\int_{B_\rho(x)} |A_k(y)-B^j_k(y)|\dy=0.
\end{equation}
If  $B^j_k$ G-converge to $B^j_\infty$ then there exists the pointwise limit $B_\infty$ of $B^j_\infty$ and
$A_k$ G-converges to $B_\infty$.
 \end{theorem}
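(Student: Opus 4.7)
The plan is to translate the statement into $\Gamma$-convergence of the associated quadratic functionals and then invoke Theorem~\ref{main}. To this end I set $f_k(y,\xi):=\langle A_k(y)\xi,\xi\rangle$ and $g^j_k(y,\xi):=\langle B^j_k(y)\xi,\xi\rangle$; by the ellipticity bounds \eqref{stime-G} these integrands lie in $\mathcal F_{\alpha,\beta}(\DD)$ with exponent $p=2$. By Theorem~\ref{equiv-G-Gamma} applied for each $j$, the $G$-convergence of $B^j_k$ to $B^j_\infty$ is equivalent to the $\Gamma$-convergence of the corresponding quadratic functionals $G^j_k(\cdot,U)$ to $G^j_\infty(\cdot,U)$ for every $U\in\AA$, with integrand $g^j_\infty(y,\xi)=\langle B^j_\infty(y)\xi,\xi\rangle$.

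Next I would verify the hypothesis of Theorem~\ref{main}. By symmetry of the matrices, for $|\xi|\le t$ one has
$$
|f_k(y,\xi)-g^j_k(y,\xi)|=|\langle(A_k(y)-B^j_k(y))\xi,\xi\rangle|\le t^2\,|A_k(y)-B^j_k(y)|,
$$
so the assumption \eqref{hypo-z-G} immediately implies \eqref{hypo-z} for every $t>0$. Theorem~\ref{main} then furnishes a function $g_\infty(x,\xi)\in\FF$ that is the pointwise a.e.\ limit of $g^j_\infty(x,\xi)$, and such that $F_k(\cdot,U)$ $\Gamma$-converges to the integral functional $G_\infty(\cdot,U)$ with integrand $g_\infty$ for every $U\in\AA$.

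The final step is to recover the matrix $B_\infty$. For almost every $x$ and every $\xi\in\mathbb R^d$ the quadratic forms $\xi\mapsto\langle B^j_\infty(x)\xi,\xi\rangle$ converge to $g_\infty(x,\xi)$. As noted before the statement of Theorem~\ref{main-G}, the polarization identity together with the uniform bound \eqref{stime-G} on $B^j_\infty(x)$ then yields a symmetric matrix $B_\infty(x)$, still obeying \eqref{stime-G} by passage to the limit, such that $B^j_\infty(x)\to B_\infty(x)$ and $g_\infty(x,\xi)=\langle B_\infty(x)\xi,\xi\rangle$. Consequently $G_\infty$ is the quadratic functional associated with $B_\infty\in\mathcal M^{\rm sym}$, and applying the reverse implication in Theorem~\ref{equiv-G-Gamma} to $A_k$ gives the $G$-convergence of $A_k$ to $B_\infty$. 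There is no substantive obstacle in this argument: once the scalar bound $|\langle(A-B)\xi,\xi\rangle|\le t^2|A-B|$ is in place, Theorem~\ref{main-G} becomes essentially a dictionary translation of Theorem~\ref{main} via Theorem~\ref{equiv-G-Gamma}.
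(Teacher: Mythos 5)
Your proposal is correct and follows exactly the route the paper indicates: the paper presents Theorem~\ref{main-G} as a direct translation of Theorem~\ref{main} via the $G$-convergence/$\Gamma$-convergence equivalence of Theorem~\ref{equiv-G-Gamma}, with the polarization identity used to pass from convergence of the quadratic forms $\langle B^j_\infty\xi,\xi\rangle$ to convergence of the matrices $B^j_\infty$. Your verification that $\sup_{|\xi|\le t}|\langle(A_k-B^j_k)\xi,\xi\rangle|\le t^2|A_k-B^j_k|$ reduces \eqref{hypo-z-G} to \eqref{hypo-z} is precisely the small observation the paper leaves implicit, and the rest of your argument is the dictionary translation the authors describe.
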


As for Corollary \ref{Corgen}, from Theorem \ref{main-G} we obtain the following result.
\begin{corollary}\label{Corgen-G}  Let $D$ be a bounded open set in $\mathbb R^d$,
and let $A_k, B_k$ be two matrix-valued sequences in $\mathcal M^{\rm sym}$. Assume that $A_k$ G-converges to $A$. 
Assume also that for almost all $x\in\DD$ we have
\begin{equation}\label{hypo-G}
\lim_{{ \rho\to0}}\limsup_{k\to+\infty}{1\over\rho^d}\int_{B_\rho(x)} |A_k(y)-B_k(y)|\dy=0.
\end{equation}
Then $B_k$ G-converges to $A$.
 \end{corollary}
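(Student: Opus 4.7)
The plan is to reduce the statement directly to Corollary \ref{Corgen} via the equivalence between $G$-convergence and $\Gamma$-convergence provided by Theorem \ref{equiv-G-Gamma}. To this end I would associate to the symmetric matrices $A_k$ and $B_k$ the quadratic integrands
$$f_k(x,\xi) := \langle A_k(x)\xi,\xi\rangle, \qquad g_k(x,\xi) := \langle B_k(x)\xi,\xi\rangle,$$
which by the uniform bounds \eqref{stime-G} belong to $\FF = \FF_{\alpha,\beta}(\DD)$ with $p=2$ and $m=1$. Let $F_k$, $G_k$ denote the corresponding quadratic functionals, and let $F$ be the quadratic functional associated with the $G$-limit $A$. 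The hypothesis that $A_k$ $G$-converges to $A$ translates, by Theorem \ref{equiv-G-Gamma} in the direction (a)$\Rightarrow$(b), into the $\Gamma$-convergence of $F_k(\cdot,U)$ to $F(\cdot,U)$ for every $U\in\AA$.

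The next step is to verify that the pointwise integrand condition \eqref{hypo} follows from \eqref{hypo-G}. This comes from the elementary estimate
$$\sup_{|\xi|\le t}|f_k(y,\xi) - g_k(y,\xi)| = \sup_{|\xi|\le t}|\langle(A_k(y)-B_k(y))\xi,\xi\rangle| \le t^2\,|A_k(y) - B_k(y)|,$$
so that the local average of $\sup_{|\xi|\le t}|f_k - g_k|$ over $B_\rho(x)$ is controlled by $t^2$ times the local average of $|A_k - B_k|$. Taking $\limsup_{k\to +\infty}$ and then the limit as $\rho\to 0$ with $\rho\in E$, the hypothesis \eqref{hypo-G} gives exactly \eqref{hypo} for every $t>0$ and for almost every $x\in \DD$.

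Corollary \ref{Corgen} then applies and yields the $\Gamma$-convergence of $G_k(\cdot,U)$ to $F(\cdot,U)$ for every $U\in\AA$. A final application of Theorem \ref{equiv-G-Gamma} in the direction (b)$\Rightarrow$(a), for the matrices $B_k$ and the limit matrix $A\in \mathcal M^{\rm sym}$, gives the desired $G$-convergence of $B_k$ to $A$. I do not foresee any substantive obstacle: the whole argument is a bookkeeping translation between the two notions, and the only point worth a brief remark is that the quadratic integrands $f_k$ and $g_k$ lie in $\FF_{\alpha,\beta}(\DD)$ with $p=2$, which is immediate from \eqref{stime-G}.
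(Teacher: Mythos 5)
Your proof is correct and takes essentially the same route as the paper: the paper derives Corollary \ref{Corgen-G} from Theorem \ref{main-G}, which is itself Theorem \ref{main} translated through the equivalence of Theorem \ref{equiv-G-Gamma}, so your use of Theorem \ref{equiv-G-Gamma} in both directions together with Corollary \ref{Corgen} is just a reordering of the same two ingredients. The estimate $\sup_{|\xi|\le t}|\langle(A_k-B_k)\xi,\xi\rangle|\le t^2|A_k-B_k|$ that converts \eqref{hypo-G} into \eqref{hypo} is exactly the right observation.
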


\begin{remark} \rm
Note that condition \eqref{hypo-G} is satisfied if $ A_k(y)-B_k(y)\to 0$ for almost all $y\in\DD$, since we can apply the Dominated Convergence Theorem thanks to \eqref{stime-G}.
\end{remark}

 \begin{remark} \rm
 If $A_k$ and $B_k$ satisfy \eqref{hypo-G}, for given $\phi\in L^2(\DD)$, let $u_k, v_k$ be the solutions to
 $$
\begin{cases} -{\rm div}(A_k\nabla u_k)=\phi \\
u_k\in H^1_0(\DD)
\end{cases}\qquad 
\begin{cases} -{\rm div}(B_k\nabla v_k)=\phi \\
v_k\in H^1_0(\DD).
\end{cases}
$$
Then the difference $u_k-v_k$ tends to $0$ weakly in $H^1_0(\DD)$. By the compactness of $G$-con\-ver\-gence, 
which we can deduce from Theorems \ref{equiv-G-Gamma} and Theorem \ref{comp}, passing to a subsequence we can assume that $A_k$ G-converges to some $A$, and apply Corollary \ref{Corgen-G} and the definition of $G$-convergence.
We then obtain that $u_k$ and $v_k$ converge to the same function weakly in $H^1_0(\DD)$; hence, $u_k-v_k$ tends to $0$ weakly in $H^1_0(\DD)$. Since the limit does not depend on the subsequence we deduce the convergence of  the full sequence. 
\end{remark}
 
\begin{remark}\label{re-nec}\rm In the one-dimensional case, $G$-convergence of $a_k$ to $a$ is equivalent to the weak$^*$ convergence  of ${1\over a_k}$ to ${1\over a_0}$ in $L^\infty(\DD)$. 
Hence, we have that the necessary and sufficient condition for $a_k$ and $b_k$ to have the same G-limit is that $1/a_k$  weakly$^*$  converges in $L^\infty(\DD)$, and
  \begin{equation}\label{hypo-7}
{1\over a_k}-{1\over b_k}\wto 0\ \hbox{ weakly$^*$ in } L^\infty(\DD).
\end{equation}
We note that condition \eqref{hypo}, which translates into
\begin{equation}\label{hypo-8}
\lim_{\rho\to0}\limsup_{k\to+\infty}{1\over\rho}\int_{B_\rho(x)} |a_k(y)-b_k(y)|\dy=0,
\end{equation}
implies \eqref{hypo-7}. Indeed, let $\theta$ be the weak$^*$ limit of ${1\over a_k}-{1\over b_k}$. For almost all $x\in\DD$, we have
\begin{eqnarray*}|\theta(x)|&=&\bigg|\lim_{\rho\to 0}\lim_{k\to+\infty} {1\over 2\rho}\int_{B_\rho(x)}\Big({1\over a_k(y)}-{1\over b_k(y)}\Big)dy\bigg|\\
&= &\lim_{\rho\to 0}\lim_{k\to+\infty} \bigg|{1\over 2\rho}\int_{B_\rho(x)}\Big({b_k(y)-a_k(y)\over a_k(y) b_k(y)}\Big)dy\bigg|
\\
&
\le & \lim_{\rho\to 0}\lim_{k\to+\infty} {1\over 2\alpha^2\rho}\int_{B_\rho(x)}|b_k(y)-a_k(y)|\dy =0
\end{eqnarray*}
by \eqref{hypo-8}. Hence $\theta(x)=0$ for almost all $x\in\DD$. 

Conversely, note that \eqref{hypo-G} is not necessary. Indeed, take $a$ and $b$ two $1$-periodic functions taking values in $[\alpha,\beta]$ such that 
$$\int_0^1{1\over a(y)}\,dy= \int_0^1{1\over b(y)}\,dy\quad\hbox{ but  } a(y)\neq b(y) \hbox{ on a set of non-zero measure.}$$
If $a_k(x)= a(kx)$ and $b_k(x)= b(kx)$ then \eqref{hypo-7} holds, but, by the weak$^*$-limit of $|a_k-b_k|$ to the constant 
$\int_0^1|a(y)-b(y)|dy$, we have
$$
\lim_{\rho\to0}\lim_{k\to+\infty}{1\over\rho}\int_{B_\rho(x)} |a_k(y)-b_k(y)|\dy=2\int_0^1|a(y)-b(y)|\dy\neq0,
$$
and \eqref{hypo-8} does not hold. 

For non-quadratic energies in dimension one similar arguments can be used to obtain Corollary \ref{Corgen} remarking that $\Gamma$-conver\-gence is characterized as the weak convergence of the Legendre transforms of the integrands (Theorem 2.35 in \cite{GCB}). 
\end{remark}

\begin{remark}\rm
In dimension $d\ge 2$, we consider the case of layered materials, for which $A_k(x)= a(kx_1) I$, where $I$ is the identity matrix and $a$ is a $1$ periodic function on $\mathbb R$ as in the previous example. In this case, the homogenized matrix  $A_{\rm hom}$ is diagonal, whose $11$-entry is the harmonic mean of $a$, while the other diagonal entries are all equal to the average of $a$.
Hence, if $a$ and $b$ are periodic functions with the same harmonic means and averages, they produce the same homogenized matrix, although in general hypothesis \eqref{hypo-8} does not hold. 

A trivial case is when $a(t)=\alpha $ for $0\le t<\frac12$ and $a(t)=\beta $ for $\frac12\le t<1$, while $b(t)=\beta $ for $0\le t<\frac12$ and $b(t)=\alpha $ for $\frac12\le t<1$, so that $|a(t)-b(t)|=\beta-\alpha$ for all $t$.
\end{remark}

We now consider homogenization problems in the context of $G$-convergence, when the matrix-valued $A_k\in\mathcal M^{\rm sym}_{\alpha,\beta}(\DD)$ can be obtained by scaling a single $A$ defined on the whole space; namely, there exists $A\in\mathcal M^{\rm sym}_{\alpha,\beta}(\mathbb R^d)$ and $\e_k$, a positive sequence tending to $0$, such that
 \begin{equation}
 A_k(x)= A\Big({x\over\e_k}\Big).
 \end{equation}
We say that $A$ is {\em homogenizable},  if for all bounded open sets $D$ the sequence $A_k$ G-converges to some $A_{\rm hom}$ independent of $x$ and of the sequence $\e_k$.

Theorem \ref{th-hom-j} translates into the following result.

  \begin{theorem}[Homogenizability by approximation]
  Let $A\in\mathcal M^{\rm sym}_{\alpha,\beta}(\mathbb R^d)$, and let $B^j$ be a sequence in $\mathcal M^{\rm sym}_{\alpha,\beta}(\mathbb R^d)$. Assume that \begin{equation}\label{hypo-3c-G} \lim_{j\to+\infty}\limsup_{R\to+\infty}{1\over R^d}\int_{B_{R}} |A(x)- B^j(x)|\dx
=0,
 \end{equation}and that each $B^j$ is homogenizable. Then $A$ is homogenizable and  $A_{\rm hom}=\lim\limits_{j\to+\infty}B^j_{\rm hom}$. 
 \end{theorem}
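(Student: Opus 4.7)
The plan is to translate the statement into the $\Gamma$-convergence setting via Theorem \ref{equiv-G-Gamma}, apply Theorem \ref{th-hom-j}, and then translate back. Define the quadratic integrands
\[
f(y,\xi) := \langle A(y)\xi, \xi\rangle, \qquad g^j(y,\xi) := \langle B^j(y)\xi, \xi\rangle,
\]
for $(y,\xi) \in \mathbb{R}^d \times \mathbb{R}^d$. The ellipticity bounds \eqref{stime-G} on $A$ and $B^j$ are exactly the $p=2$, $m=1$ version of \eqref{(16.1)}, so $f, g^j \in \FF_{\alpha,\beta}(\mathbb{R}^d)$.

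Next I would verify hypothesis \eqref{hypo-3c} of Theorem \ref{th-hom-j}. For $|\xi|\le t$ one has the pointwise estimate
\[
|f(y,\xi) - g^j(y,\xi)| = |\langle (A(y) - B^j(y))\xi, \xi\rangle| \le t^2\, |A(y) - B^j(y)|,
\]
so $\sup_{|\xi|\le t} |f(y,\xi) - g^j(y,\xi)| \le t^2 |A(y) - B^j(y)|$. Dividing by $R^d$, integrating over $B_R$, and using \eqref{hypo-3c-G} yields \eqref{hypo-3c} for every $t>0$.

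The assumption that each $B^j$ is homogenizable means, by Theorem \ref{equiv-G-Gamma} applied on every bounded open $D$, that the scaled quadratic functionals $G^j_k(u,U) := \int_U \langle B^j(x/\e_k)\nabla u, \nabla u\rangle\dx$ $\Gamma$-converge for every $U \in \AA$ to $\int_U \langle B^j_{\rm hom}\nabla u, \nabla u\rangle \dx$, whose integrand $g^j_{\rm hom}(\xi) = \langle B^j_{\rm hom}\xi, \xi\rangle$ does not depend on $x$. Hence each $g^j$ is homogenizable in the $\Gamma$-convergence sense, and Theorem \ref{th-hom-j} gives that $f$ is homogenizable with $f_{\rm hom}(\xi) = \lim_{j\to+\infty}\langle B^j_{\rm hom}\xi, \xi\rangle$ for every $\xi$.

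It only remains to recover a matrix $A_{\rm hom}$ representing $f_{\rm hom}$. The matrices $B^j_{\rm hom}$ all satisfy \eqref{stime-G} uniformly in $j$, so they lie in a bounded subset of ${\mathbb M}^{d\times d}_{\rm sym}$; the pointwise convergence of their associated quadratic forms on $\mathbb{R}^d$ determines the limit matrix through the polarization identity
\[
\langle B\xi,\eta\rangle = \tfrac14\bigl(\langle B(\xi+\eta),\xi+\eta\rangle - \langle B(\xi-\eta),\xi-\eta\rangle\bigr).
\]
Therefore $B^j_{\rm hom} \to A_{\rm hom}$ in ${\mathbb M}^{d\times d}_{\rm sym}$, with $A_{\rm hom}$ satisfying \eqref{stime-G} and $f_{\rm hom}(\xi) = \langle A_{\rm hom}\xi, \xi\rangle$. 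A final application of Theorem \ref{equiv-G-Gamma} in the reverse direction on every bounded open $D$ upgrades the $\Gamma$-homogenizability of $f$ to $G$-convergence of $A_k(x) = A(x/\e_k)$ to $A_{\rm hom}$, proving the homogenizability of $A$ together with the identification $A_{\rm hom} = \lim_j B^j_{\rm hom}$. No step seems genuinely delicate; the only point requiring a line of care is the uniform bound on $B^j_{\rm hom}$, which is what upgrades the pointwise-in-$\xi$ convergence of the quadratic forms to norm convergence of the matrices themselves.
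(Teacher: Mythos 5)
Your proof is correct and follows exactly the route the paper intends: the paper introduces this result with the one-line remark that ``Theorem \ref{th-hom-j} translates into the following result,'' relying on the earlier translation between $G$- and $\Gamma$-convergence (Theorem \ref{equiv-G-Gamma}) and the polarization identity, and you have simply spelled out that translation. One tiny point: the uniform bound on $B^j_{\rm hom}$ is not actually needed to pass from pointwise convergence of the quadratic forms to convergence of the matrices — polarization alone already identifies the bilinear forms and hence the matrices — though it is a harmless extra observation that the limit matrix lies in $\mathcal M^{\rm sym}$.
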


 In the particular case $B^j=B$ we have the following stability result as a corollary.
 
 \begin{corollary}[Stability of homogenizability]\label{stabhom-G} Let $A,B\in\mathcal M^{\rm sym}_{\alpha,\beta}(\mathbb R^d)$. Assume that  \begin{equation}\label{hypo-3b-G}\lim_{R\to+\infty}{1\over R^d}\int_{B_{R}} |A(x)- B(x)|\dx=0.
 \end{equation} Then $A$ is homogenizable if and only if $B$ is homogenizable, and in this case $A_{\rm hom}=B_{\rm hom}$.
 \end{corollary}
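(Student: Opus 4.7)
The plan is to reduce this $G$-convergence statement to its $\Gamma$-convergence counterpart, Corollary \ref{stabhom}, by passing to the associated quadratic integrands and invoking the equivalence in Theorem \ref{equiv-G-Gamma}.

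Concretely, first I would set $f(y,\xi):=\langle A(y)\xi,\xi\rangle$ and $g(y,\xi):=\langle B(y)\xi,\xi\rangle$ for $y\in\mathbb R^d$ and $\xi\in\mathbb R^d$. The ellipticity bounds \eqref{stime-G} give $\alpha|\xi|^2\le f(y,\xi),g(y,\xi)\le\beta(1+|\xi|^2)$, so both belong to $\FF_{\alpha,\beta}(\mathbb R^d)$ with $p=2$ (in the scalar setting of Section \ref{G-conv}), and they are quasiconvex as quadratic forms. To apply Corollary \ref{stabhom}, the hypothesis \eqref{hypo-3b} must be checked: for every $|\xi|\le t$,
$$|f(y,\xi)-g(y,\xi)|=|\langle(A(y)-B(y))\xi,\xi\rangle|\le|A(y)-B(y)|\,|\xi|^2\le t^2\,|A(y)-B(y)|,$$
whence
$$\frac{1}{R^d}\int_{B_R}\sup_{|\xi|\le t}|f(y,\xi)-g(y,\xi)|\dy\le\frac{t^2}{R^d}\int_{B_R}|A(y)-B(y)|\dy\;\longrightarrow\;0$$
as $R\to+\infty$, by \eqref{hypo-3b-G}.

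Next, I would match the two notions of homogenizability. By Theorem \ref{equiv-G-Gamma}, for every bounded open set $D$ and every sequence $\e_k\to 0^+$, the $G$-convergence of $A(x/\e_k)$ on $D$ is equivalent to the $\Gamma$-convergence of the associated quadratic functionals \eqref{Fekk} built from $f$. Thus $A$ is homogenizable in the sense of Section \ref{G-conv} precisely when, for every $D$, the $G$-limit is a constant matrix $A_{\rm hom}$, which by the polarization identity is in turn equivalent to $f$ being homogenizable in the functional sense of Section \ref{hom-sec}, with $f_{\rm hom}(\xi)=\langle A_{\rm hom}\xi,\xi\rangle$. The same dictionary links $B$ and $g$. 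Applying Corollary \ref{stabhom} to $f$ and $g$ then yields that $f$ is homogenizable if and only if $g$ is, and in that case $f_{\rm hom}=g_{\rm hom}$; translating back through polarization gives that $A$ is homogenizable if and only if $B$ is, with $A_{\rm hom}=B_{\rm hom}$.

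There is no genuine obstacle in this argument, only the bookkeeping of moving between matrix and functional formulations via Theorem \ref{equiv-G-Gamma}. The only calculation is the elementary pointwise bound $|\langle(A-B)\xi,\xi\rangle|\le|A-B|\,|\xi|^2$, which immediately upgrades the matrix averaging condition \eqref{hypo-3b-G} to the integrand condition \eqref{hypo-3b} required by Corollary \ref{stabhom}.
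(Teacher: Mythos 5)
Your proof is correct and follows essentially the same route the paper intends: pass from matrices to the quadratic integrands $f(y,\xi)=\langle A(y)\xi,\xi\rangle$ and $g(y,\xi)=\langle B(y)\xi,\xi\rangle$, check that the matrix-average condition \eqref{hypo-3b-G} upgrades (via $|\langle(A-B)\xi,\xi\rangle|\le|A-B|\,|\xi|^2$) to the integrand condition \eqref{hypo-3b}, invoke Corollary \ref{stabhom}, and translate back through Theorem \ref{equiv-G-Gamma} and the polarization identity. The paper leaves this as a corollary of the $G$-convergence translation of Theorem \ref{th-hom-j}; your argument fills in exactly those bookkeeping steps correctly.
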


  Note that \eqref{hypo-3c-G} and \eqref{hypo-3b-G} are satisfied if $A- B^j$ and $A-B$, respectively, belongs to $L^p(\mathbb R^d)$ for some $p\in[1,+\infty)$. 

 \begin{remark}\rm As noted in Remark \ref{re-nec}, condition \eqref{hypo-3b-G} is not necessary to the validity of the claim. 
 Again resorting to the one-dimensional case, in which we have functions $a,b$ in the place of matrices $A,B$, for which condition \eqref{hypo-3b-G} reads
  \begin{equation}\label{hypo-3bbc}\lim_{R\to+\infty}{1\over R}\int_{-R}^R|a(y)- b(y)|\dx=0,
 \end{equation} 
 we also see that the weaker condition
 \begin{equation}\label{hypo-3bb}\lim_{R\to+\infty}{1\over R}\int_{-R}^R(a(y)- b(y))\dx=0
 \end{equation} 
 does not allow to deduce the homogenizability of $a$ from that of $b$.
 To check this, it suffices to take $b$ identically equal to a positive constant $\gamma\in(\alpha,\beta)$, and 
 $$
 a(x)=\begin{cases}\gamma+c &\hbox{ if $x\ge0$}\\
 \gamma-c&\hbox{ if $x<0$,}\end{cases}
 $$
 with $c$ a constant such that $\gamma\pm c\in(\alpha,\beta)$. Then \eqref{hypo-3bb} holds but $a$ is not homogenizable. Indeed, $a_k=a$  do not depend on $k$, so that they G-converge to $a$.  \end{remark}

Finally, we specialize $G$-convergence to the stochastic case.
 Let $({\OO},\mathcal T,P)$ be a probability space; let ${\mathcal M}_{\rm stoc}$ be the collection of random matrices $A=A(\omega, x):\OO\times \mathbb R^d\to{\mathbb M}_{\rm sym}^{d\times d}$ satisfying the following properties:
 
 (i) $A$ is $\mathcal T\times \mathcal B(\mathbb R^d)$-measurable;
 
 (ii) for all $\omega\in\OO$ we have $A(\omega,\cdot)\in\mathcal M^{\rm sym}_{\alpha,\beta}(\mathbb R^d)$.

Given  a group of $P$-preserving transformations the notion of stochastic periodicity is obtained by modifying the definition in Section {\rm\ref{stock}. If $A\in\mathcal M_{\rm stoc}$ is stochastically periodic, then from \cite{Koz2,JKO,DalMasoModica} we have that for almost all $\omega$ the matrix $A(\omega,\cdot)$ is homogenizable.  The following result is an immediate consequence of Theorem \ref{stockholm}.

\begin{theorem}
Let $A, B\in\mathcal M_{\rm stoc}$, with $A$ stochastically periodic, and for $P$-almost every $\omega\in\OO$ let $A_{\rm hom}(\omega)$ be its homogenized matrix.  Assume in addition that
\begin{equation}\label{hypo-3c-stoc-G} \lim_{R\to+\infty}{1\over R^d}\int_{\OO}\Big(\int_{B_{R}}|A(\omega,x)- B(\omega, x)|\dx\Big)dP(\omega)
=0.
 \end{equation}
Then, given a sequence $\e_k$ of positive numbers converging to $0$  there exists a subsequence $\e_{k_j}$ such that  $B_{j}(\omega,x)= B\big(\omega,{x\over\e_{k_j}}\big)$   $G$-converges to  $A_{\rm hom}(\omega)$ almost surely.   
\end{theorem}

\section{$H$-convergence}
Non-symmetric elliptic operators fall in the range of application of the stability results by $\Gamma$-convergence  only  upon significantly modifying the classical setting in Section \ref{G-conv} and using differential constraints (see \cite{ADM} for details).
In this section we follow a more direct approach, which avoids the use of $\Gamma$-convergence. 
Although we cannot exploit the results of Sections \ref{pre-sec} and \ref{closure-sec}, 
we follow the same ideas based on compactness and localisation arguments.

\smallskip
Since we may also consider non-symmetric matrices, for $\alpha,\beta>0$ with $\alpha\le \beta$, and $\DD$ an open set in $\mathbb R^d$  we let $\mathcal M=\mathcal M_{\alpha,\beta}(\DD)$ denote the family of all measurable $A\colon\DD\to {\mathbb M}^{d\times d}$, such that 
\begin{equation}\label{stime-H} 
\alpha |\xi|^2\le \langle A(x)\xi,\xi\rangle \ \hbox{ and }\quad{ |\xi|^2\le \beta \langle A^{-1}(x)\xi,\xi\rangle}\ \hbox{ for almost all $x\in\DD$ and $\xi\in\mathbb R^d$.}
\end{equation}

\begin{definition} Let $\DD$ be a bounded open set, $A_k$ be a sequence in $\mathcal M_{\alpha,\beta}(\DD)$. We say that $A_k$ {\em H-converges} to $A\in\mathcal M_{\alpha,\beta}(\DD)$ if for all $\phi\in H^{-1}(\DD)$ the sequence of solutions $u_k$ in the sense of distribution of 
$$
\begin{cases} -{\rm div}(A_k\nabla u_k)=\phi \\
u_k\in H^1_0(\DD)
\end{cases}
$$
converges weakly in $H^1_0(\DD)$ to the solution $u$ of 
$$
\begin{cases} -{\rm div}(A\nabla u)=\phi \\
u\in H^1_0(\DD)\, ,
\end{cases}
$$
and we also have
$
A_k\nabla u_k \wto   A\nabla u \hbox{ weakly in } L^2(\DD;\mathbb R^d)$.
\end{definition}

\begin{remark}[Localization of $H$-convergence]\label{loc-H}\rm  Let 
$A_k$ H-converge to $A$.
It is proven in \cite{MT} that if $\DD'$ is an open subset of $\DD$ and $w\in H^1(\DD')$ then  the sequence of solutions $u_k$ in the sense of distribution of 
$$
\begin{cases} -{\rm div}(A_k\nabla u_k)=\phi \\
u_k-w\in H^1_0(\DD')
\end{cases}
$$
converges weakly in $H^1(\DD')$ to the solution $u$ of 
$$
\begin{cases} -{\rm div}(A\nabla u)=\phi \\
u-w\in H^1_0(\DD')\,,
\end{cases}
$$
and we also have $A_k\nabla u_k \wto   A\nabla u \hbox{ weakly in } L^2(\DD';\mathbb R^d)$.
\end{remark}

The corresponding result to Theorem \ref{derit} is the following derivation formula, where now $\ell_\xi(x)=\langle \xi,x\rangle$ denotes the scalar linear function with gradient $\xi$.

\begin{theorem}
Let $A\in\mathcal M$, and let $x_0$ be a Lebesgue point of $A$, and let $\xi\in\mathbb R^d$.  For every $\rho>0$ consider the problem
\begin{equation}\label{HA}
\begin{cases} -{\rm div}(A\nabla u_\rho)=0 \\
u_\rho-\ell_\xi\in H^1_0(B_\rho(x_0)).\end{cases}
\end{equation}
Then we have
\begin{equation}\label{0-moment}
A(x_0)\xi=\lim_{\rho\to 0} \frac1{|B_\rho|}\int_{B_\rho(x_0)} A\nabla u_\rho\dx.
\end{equation}
\end{theorem}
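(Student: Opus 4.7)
The natural strategy is to rescale to a fixed ball and exploit the Lebesgue-point property. Set $A_\rho(y):=A(x_0+\rho y)$ on $B_1$ and
$$v_\rho(y):=\tfrac1\rho\bigl(u_\rho(x_0+\rho y)-\langle\xi,x_0\rangle\bigr).$$
Then $v_\rho-\ell_\xi\in H^1_0(B_1)$, the function $v_\rho$ solves $-\mathrm{div}(A_\rho\nabla v_\rho)=0$ on $B_1$, and the change of variables $x=x_0+\rho y$ gives
$$\frac{1}{|B_\rho|}\int_{B_\rho(x_0)}A\nabla u_\rho\dx=\frac{1}{|B_1|}\int_{B_1}A_\rho\nabla v_\rho\dy,$$
so it suffices to prove the right-hand side tends to $A(x_0)\xi$.

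Since $x_0$ is a Lebesgue point of $A$,
$$\int_{B_1}|A_\rho(y)-A(x_0)|\dy=\frac{1}{\rho^d}\int_{B_\rho(x_0)}|A(x)-A(x_0)|\dx\longrightarrow 0,$$
and the uniform bound $|A_\rho|\le\beta$ upgrades this to strong convergence in every $L^p(B_1)$, $p<\infty$. Next, testing the equation for $v_\rho$ against $v_\rho-\ell_\xi\in H^1_0(B_1)$ and using the coercivity \eqref{stime-H} yields a bound $\|\nabla v_\rho\|_{L^2(B_1)}\le C|\xi|$ independent of $\rho$, hence, by Poincar\'e's inequality, $v_\rho$ is bounded in $H^1(B_1)$.

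Extract a subsequence with $v_\rho\wto v$ weakly in $H^1(B_1)$, and note $v-\ell_\xi\in H^1_0(B_1)$. For any $\varphi\in C^\infty_c(B_1)$ the identity
$$0=\int_{B_1}A_\rho\nabla v_\rho\cdot\nabla\varphi\dy=\int_{B_1}\nabla v_\rho\cdot A_\rho^T\nabla\varphi\dy$$
passes to the limit because $A_\rho^T\nabla\varphi\to A(x_0)^T\nabla\varphi$ strongly in $L^2(B_1)$ (weak--strong pairing), yielding $-\mathrm{div}(A(x_0)\nabla v)=0$ with constant coefficient $A(x_0)$. Since $\ell_\xi$ solves this constant-coefficient problem with the correct boundary datum, uniqueness forces $v=\ell_\xi$, and by uniqueness of the limit the whole family satisfies $\nabla v_\rho\wto\xi$ weakly in $L^2(B_1)$.

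To conclude, I would pair this weak convergence with the strong $L^2$-convergence $A_\rho\to A(x_0)$: testing the product $A_\rho\nabla v_\rho$ against constant vector fields on $B_1$ gives $\int_{B_1}A_\rho\nabla v_\rho\dy\to |B_1|A(x_0)\xi$, which is \eqref{0-moment}. The only delicate point is the passage to the limit in the product $A_\rho\nabla v_\rho$, where one factor converges only weakly; this is handled by the standard weak--strong trick thanks to the $L^p$ strong convergence of $A_\rho$ guaranteed by the Lebesgue-point property. Alternatively one may invoke the localization of $H$-convergence (Remark~\ref{loc-H}) after observing that strong $L^1$ convergence of $A_\rho$ to a constant matrix already implies $H$-convergence to that same constant, so that both $v_\rho\wto\ell_\xi$ and $A_\rho\nabla v_\rho\wto A(x_0)\xi$ come for free.
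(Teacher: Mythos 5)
Your proof is correct and follows essentially the same route as the paper: rescale to the fixed ball $B_1$, use the Lebesgue-point property to get strong $L^1$ (hence, by the uniform bound, strong $L^p$) convergence of $A_\rho$ to the constant matrix $A(x_0)$, and conclude that the rescaled fluxes converge to $A(x_0)\xi$. The only difference is in the final step: the paper invokes the Murat--Tartar result that strong $L^1$ convergence of coefficients implies $H$-convergence, while you reprove the needed special case directly via an energy bound, weak compactness, and weak--strong pairing; this works smoothly here because the limit matrix is constant (so no div-curl machinery is required), and you correctly note the paper's route as an alternative at the end.
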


\begin{proof} 

For notational simplicity, we can assume that $x_0=0$. For $y\in B_1$ we define $A_\rho(y)= A(\rho y)$, and $v_\rho(y)= \frac1\rho u_\rho(\rho y)$,
so that equation \eqref{HA} is rewritten as 
 \begin{equation}\label{HA-v}
\begin{cases} -{\rm div}(A_\rho\nabla v_\rho)=0 \\
v_\rho-\ell_\xi\in H^1_0(B_1).
\end{cases}
\end{equation}
Since $0$ is a Lebesque point for $A$, we have  $A_\rho\to A(0)$  in $L^1(B_1)$, so that, by \cite{MT} we also have the corresponding $H$-convergence, which implies that 
$A_\rho\nabla v_\rho$ tends to $A(0)\nabla v$, where $v$ is the unique solution to
 \begin{equation}\label{HA-0}
\begin{cases} -{\rm div}(A(0)\nabla v)=0 \\
v-\ell_\xi\in H^1_0(B_1)\,,
\end{cases}
\end{equation}
which coincides with $\ell_\xi$. This implies \eqref{0-moment} after a change of variables. 
\end{proof}

The analog of Corollary \ref{Corgen} is the following result.

\begin{theorem}[Stability of $H$-convergence]\label{Corgen-H}
Let $A_k, B_k$ be two sequences of matrices in $\mathcal M$. Assume that $A_k$ H-converges to $A$ in $\DD$.
 Assume also that   for almost all $x\in\DD$ we have
\begin{equation}\label{hypo-H}
\lim_{{ \rho\to0}}\limsup_{k\to+\infty}{1\over\rho^d}\int_{B_\rho(x)}|A_k-B_k|\dy=0.
\end{equation}
Then also $B_k$ H-converges to $A$. Furthermore, if $A,B\in\mathcal M_{\alpha,\beta}(\mathbb R^d)$ and  \eqref{hypo-3b-G} holds, then $A$ is homogenizable if and only if $B$ is homogenizable, and in this case $A_{\rm hom}=B_{\rm hom}$.
 \end{theorem}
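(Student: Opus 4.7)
The plan is to adapt the proof of Theorem \ref{main} to this PDE setting, replacing the two competing integral functionals by the two Dirichlet problems and using the derivation formula \eqref{0-moment} in place of \eqref{rap-f}. Compactness of $H$-convergence in $\mathcal M_{\alpha,\beta}(\DD)$ (see \cite{MT}) together with a Urysohn-type argument reduces the first claim to the following: any subsequence of $B_k$ that $H$-converges to some $B^*\in\mathcal M_{\alpha,\beta'}(\DD)$ must satisfy $B^*=A$ a.e. From now on we work with such a (not relabelled) subsequence.

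Fix $\xi\in\mathbb R^d$ and a common Lebesgue point $x_0\in\DD$ of $A$ and $B^*$ at which \eqref{hypo-H} holds. For $\rho\in E$ small enough that $B_\rho(x_0)\subset\DD$, let $u_k^\rho,v_k^\rho\in\ell_\xi+H^1_0(B_\rho(x_0))$ be the weak solutions of $-{\rm div}(A_k\nabla u_k^\rho)=0$ and $-{\rm div}(B_k\nabla v_k^\rho)=0$, and let $u^\rho,v^\rho$ denote the analogous solutions obtained by replacing $A_k$ by $A$ and $B_k$ by $B^*$. By the localization of $H$-convergence (Remark \ref{loc-H}), $A_k\nabla u_k^\rho\weak A\nabla u^\rho$ and $B_k\nabla v_k^\rho\weak B^*\nabla v^\rho$ in $L^2(B_\rho(x_0);\mathbb R^d)$, so applying \eqref{0-moment} to $A$ and to $B^*$ at $x_0$ represents both $A(x_0)\xi$ and $B^*(x_0)\xi$ as iterated limits, first in $k\to+\infty$ and then in $\rho\to 0$ along $E$, of the averaged fluxes of $A_k\nabla u_k^\rho$ and $B_k\nabla v_k^\rho$ on $B_\rho(x_0)$. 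To compare them, decompose $A_k\nabla u_k^\rho-B_k\nabla v_k^\rho=A_k\nabla(u_k^\rho-v_k^\rho)+(A_k-B_k)\nabla v_k^\rho$, test the subtracted equation with $u_k^\rho-v_k^\rho\in H^1_0(B_\rho(x_0))$, and use ellipticity of $A_k$ together with Cauchy-Schwarz to obtain
$$\alpha^2\int_{B_\rho(x_0)}|\nabla(u_k^\rho-v_k^\rho)|^2\,dx\le\int_{B_\rho(x_0)}|A_k-B_k|^2|\nabla v_k^\rho|^2\,dx.$$
The Meyers-Elcrat higher integrability estimate \cite{M-E,gallouet} applied to $v_k^\rho$ furnishes $\eta=\eta(\alpha,\beta)>0$ and $C=C(\alpha,\beta)$ with $\int_{B_\rho(x_0)}|\nabla v_k^\rho|^{2+\eta}\,dx\le C|B_\rho||\xi|^{2+\eta}$ uniformly in $k$ and $\rho$, and Hölder combined with $|A_k-B_k|\le 2\beta$ bounds both $\frac1{|B_\rho|}\int_{B_\rho(x_0)}|A_k\nabla(u_k^\rho-v_k^\rho)|\,dx$ and $\frac1{|B_\rho|}\int_{B_\rho(x_0)}|(A_k-B_k)\nabla v_k^\rho|\,dx$ by $C|\xi|^2$ times a positive power of $\frac1{|B_\rho|}\int_{B_\rho(x_0)}|A_k-B_k|\,dx$, exactly as in the proof of Theorem \ref{main}. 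Hypothesis \eqref{hypo-H} then forces $A(x_0)\xi=B^*(x_0)\xi$, and since $\xi$ is arbitrary, $A=B^*$ almost everywhere.

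For the homogenization statement, fix a bounded open $\DD\subset\mathbb R^d$ and a sequence $\e_k\to 0^+$, and set $A_k(x)=A(x/\e_k)$, $B_k(x)=B(x/\e_k)$. The change of variables $y=x/\e_k$ together with the inclusion $B_{\rho/\e_k}(x_0/\e_k)\subset B_{R_k}(0)$ with $R_k:=(1+|x_0|/\rho)\rho/\e_k$ yields
$$\frac1{\rho^d}\int_{B_\rho(x_0)}|A_k-B_k|\,dx\le\Big(1+\frac{|x_0|}{\rho}\Big)^d\frac1{R_k^d}\int_{B_{R_k}}|A-B|\,dy,$$
whose right-hand side tends to $0$ as $k\to+\infty$ by \eqref{hypo-3b-G}, since $R_k\to+\infty$. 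Hence \eqref{hypo-H} holds with $E=(0,+\infty)$, and the first part (applied on each bounded $\DD$) transfers homogenizability from $A$ to $B$ with $B_{\rm hom}=A_{\rm hom}$; exchanging the roles of $A$ and $B$ gives the converse. The main obstacle is the choice of a higher integrability substitute for Theorem \ref{16.1} valid for the non-symmetric linear scalar equation satisfied by $v_k^\rho$: the mild extra integrability of $\nabla v_k^\rho$ is precisely what converts the assumed $L^1$-smallness of $|A_k-B_k|$ into smallness of the weighted $L^2$ quantity $|A_k-B_k|^2|\nabla v_k^\rho|^2$, which is the engine of the argument, just as it is in the $\Gamma$-convergence analog.
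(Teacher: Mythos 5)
Your proposal is correct and follows essentially the same route as the paper's own proof: reduce by compactness and Urysohn to showing that any $H$-limit $B^*$ of a subsequence of $B_k$ equals $A$ at a.e.\ Lebesgue point, pass to localized Dirichlet problems on small balls, represent $A(x_0)\xi$ and $B^*(x_0)\xi$ via the derivation formula \eqref{0-moment} and the localization of $H$-convergence, then estimate the difference of averaged fluxes by an energy estimate for $u_k^\rho - v_k^\rho$ combined with the Meyers--Elcrat higher integrability for $\nabla v_k^\rho$ (obtained uniformly in $\rho$ via the scaling to the unit ball). The only cosmetic departure is your choice of decomposition $A_k\nabla u_k^\rho - B_k\nabla v_k^\rho = A_k\nabla(u_k^\rho - v_k^\rho) + (A_k - B_k)\nabla v_k^\rho$, where the paper instead uses $(A_k-B_k)\nabla u_k^\rho + B_k(\nabla u_k^\rho - \nabla v_k^\rho)$; the two are interchangeable, and in either case one of the two pieces is controlled by plain Cauchy--Schwarz and the equicoercive $L^2$ bound while the other one needs the Meyers estimate through the testing argument. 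Your treatment of the homogenization part via the change of variables $y = x/\e_k$ and the inclusion $B_{\rho/\e_k}(x_0/\e_k)\subset B_{R_k}$ matches the change-of-variables argument the paper delegates to the corollary.
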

 
 \begin{proof} By the compactness of $H$-convergence (see \cite{MT}) we may assume that $B_k$ $H$-converges to $B$, so that we only have to prove that $B=A$ almost everywhere. We fix $x_0$ a Lebesgue point for both $A$ and $B$. For notational simplicity, we can assume that $x_0=0$.
 
Let $\xi\in\mathbb R^d$ and consider the solutions $u_k^\rho$ and $v_k^\rho$ to 
\begin{equation}\label{hypo-H2}
\begin{cases} -{\rm div}(A_k\nabla u_k^\rho)=0 \\
u_k^\rho-\ell_\xi\in H^1_0(B_\rho),
\end{cases}\qquad
\begin{cases} -{\rm div}(B_k\nabla v_k^\rho)=0 \\
v_k^\rho-\ell_\xi\in H^1_0(B_\rho),
\end{cases}
\end{equation}
 respectively. Moreover, we also consider the solutions $u^\rho$ and $v^\rho$ to 
\begin{equation}\label{hypo-H3}
\begin{cases} -{\rm div}(A\nabla u^\rho)=0 \\
u^\rho-\ell_\xi\in H^1_0(B_\rho),
\end{cases}\qquad
\begin{cases} -{\rm div}(B\nabla v^\rho)=0 \\
v^\rho-\ell_\xi\in H^1_0(B_\rho),
\end{cases}
\end{equation}
respectively. By Remark \ref{loc-H} we have
\begin{eqnarray}\label{hypo-H4}
\lim_{k\to+\infty}\int_{B_\rho} (A_k\nabla u_k^\rho-B_k\nabla v_k^\rho)\dy=\int_{B_\rho} (A\nabla u^\rho-B\nabla v^\rho)\dy.
\end{eqnarray}
In order to show that $A(0)=B(0)$, by \eqref{0-moment} and the arbitrariness of $\xi$ it is then equivalent to show that
 \begin{eqnarray}\label{hypo-H5}
\lim_{\rho\to 0}\lim_{k\to+\infty}\frac1{|B_\rho|}\int_{B_\rho} (A_k\nabla u_k^\rho-B_k\nabla v_k^\rho)\dy=0.
\end{eqnarray}
To that end, we write
 \begin{eqnarray}\label{hypo-H6} \nonumber
&&\Big|\frac1{|B_\rho|}\int_{B_\rho} (A_k\nabla u_k^\rho-B_k\nabla v_k^\rho)\dy\Big|\\  \nonumber
&&=\Big|\frac1{|B_\rho|}\int_{B_\rho} ((A_k-B_k)\nabla u_k^\rho+B_k (\nabla u_k^\rho-\nabla v_k^\rho))\dy\Big|\\  \nonumber
&&\le\frac1{|B_\rho|}\int_{B_\rho} |A_k-B_k||\nabla u_k^\rho|\dy+\frac1{|B_\rho|}\int_{B_\rho} |B_k||\nabla u_k^\rho-\nabla v_k^\rho|\dy\\  \nonumber
&&\le\Big(\frac1{|B_\rho|}\int_{B_\rho} |A_k-B_k|^2\dy\Big)^{\frac12}\Big(\frac1{|B_\rho|}\int_{B_\rho}|\nabla u_k^\rho|^2\dy\Big)^{\frac12}\\  
&& \qquad +\Big(\frac1{|B_\rho|}\int_{B_\rho} |B_k|^2\dy\Big)^{\frac12}\Big(\frac1{|B_\rho|}\int_{B_\rho}|\nabla u_k^\rho-\nabla v_k^\rho|^2\dy\Big)^{\frac12}.
\end{eqnarray}
Note that 
$$
\lim_{\rho\to 0}\lim_{k\to+\infty}\frac1{|B_\rho|}\int_{B_\rho} |A_k-B_k|^2\dy=0\qquad \hbox{ and }\qquad\frac1{|B_\rho|}\int_{B_\rho} |B_k|^2\dy\le \beta^2
$$
by \eqref{hypo-H} and \eqref{stime-H}. 
By multiplying the left-hand equation in \eqref{hypo-H2} by $u^\rho_k-\ell_\xi$ and integrating on $B_\rho$ we obtain 
$$
\frac1{|B_\rho|}\int_{B_\rho}|\nabla u_k^\rho|^2\dy\le \frac{\beta^2}{\alpha^2}|\xi|^2
$$
for all $k$ and $\rho$. In order to estimate the last term in \eqref{hypo-H6}
we note that 
\begin{equation}\label{hypo-H7}
\begin{cases} -{\rm div}(A_k(\nabla u_k^\rho-\nabla v_k^\rho))={\rm div}((A_k-B_k)\nabla v_k^\rho)) \\
u_k^\rho-v_k^\rho\in H^1_0(B_\rho)
\end{cases}
\end{equation}
by \eqref{hypo-H2}.
Multiplying the equation in \eqref{hypo-H7} by $u_k^\rho-v_k^\rho$ and integrating on $B_\rho$ we get, if $\frac1p+\frac1q=\frac12$, 
\begin{eqnarray*}\label{hypo-H8}
&&\hskip-.5cm\frac\alpha{|B_\rho|}\int_{B_\rho}|\nabla u_k^\rho-\nabla v_k^\rho|^2\dy
\le \frac1{|B_\rho|}\int_{B_\rho}|A_k-B_k||\nabla v_k^\rho||\nabla u_k^\rho-\nabla v_k^\rho|\dy\\ 
&&\le
\Big(\frac1{|B_\rho|}\int_{B_\rho}|A_k-B_k|^p\dy\Big)^{\frac1p}
\Big(\frac1{|B_\rho|}\int_{B_\rho}|\nabla v_k^\rho|^q\dy\Big)^{\frac1q}
\Big(\frac1{|B_\rho|}\int_{B_\rho}|\nabla u_k^\rho-\nabla v_k^\rho|^2\dy\Big)^{\frac12},
\end{eqnarray*}
so that
\[
\Big(\frac1{|B_\rho|}\int_{B_\rho}|\nabla u_k^\rho-\nabla v_k^\rho|^2\dy\Big)^{\frac12}
 \le \frac1\alpha
\Big(\frac1{|B_\rho|}\int_{B_\rho}|A_k-B_k|^p\dy\Big)^{\frac1p}
\Big(\frac1{|B_\rho|}\int_{B_\rho}|\nabla v_k^\rho|^q\dy\Big)^{\frac1q}.
\]
 In order to show that this last right-hand side tends to $0$ as $k\to+\infty$ it suffices to show that there exists $q>2$ such that the last term is bounded uniformly with respect to $\rho$ and $k$. To this end, we define the scaled functions $z^\rho_k(x)= \frac1\rho v_k^\rho(\rho x)$ and matrices $B^\rho_k(x)=  B_k(\rho x)$, and note that they satisfy 
 \begin{equation}\label{HA-w}
\begin{cases} -{\rm div}(B^\rho_k\nabla z^\rho_k)=0 \\
z^\rho_k-\ell_\xi\in H^1_0(B_1).
\end{cases}
\end{equation}
By the Meyers-Elcrat higher-integrability theorem \cite[Theorem 2]{M-E}
(see also \cite{gallouet}), there exists $q>2$, and $C>0$, independent of $\rho$ and $k$, such that 
$$
\frac1{|B_\rho|}\int_{B_\rho}|\nabla v_k^\rho|^q\dy=\frac1{|B_1|}\int_{B_1}|\nabla z_k^\rho|^q\dy\le C,
$$
which proves the claim.
\end{proof}

Let $A\in\mathcal M_{\alpha,\beta}(\mathbb R^d)$, let $\e_k$ be a positive sequence tending to $0$, and let
 \begin{equation}\label{ak-hom}
 A_k(x)= A\Big({x\over\e_k}\Big).
 \end{equation}
We say that $A$ is {\em homogenizable} if $A_k$ H-converges to some $A_{\rm hom}$ independent of $x$ and of the sequence $\e_k$.

\begin{corollary}
If  $A,B\in\mathcal M_{\alpha,\beta}(\mathbb R^d)$ and  \eqref{hypo-3b-G} holds, then $A$ is homogenizable if and only if $B$ is homogenizable, and in this case $A_{\rm hom}=B_{\rm hom}$.
\end{corollary}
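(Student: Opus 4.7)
The plan is to reduce this corollary to the stability statement already proved as Theorem \ref{Corgen-H} applied to the scaled sequences $A_k(x)=A(x/\varepsilon_k)$ and $B_k(x)=B(x/\varepsilon_k)$. So suppose $A$ is homogenizable, fix a bounded open set $\DD$ and an arbitrary sequence $\varepsilon_k\to 0^+$. Then by definition $A_k$ $H$-converges on $\DD$ to the constant matrix $A_{\rm hom}$. If we can verify hypothesis \eqref{hypo-H} for the pair $(A_k,B_k)$ at almost every $x\in\DD$ (say with $E=(0,+\infty)$), then Theorem \ref{Corgen-H} gives that $B_k$ $H$-converges on $\DD$ to $A_{\rm hom}$ as well, so $B$ is homogenizable with $B_{\rm hom}=A_{\rm hom}$. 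The reverse implication follows by symmetry.

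The one real step is the verification of \eqref{hypo-H}. Fix $x\in\DD$ and $\rho>0$. By the change of variable $y=\varepsilon_k z$,
\begin{equation*}
\frac{1}{\rho^d}\int_{B_\rho(x)}|A_k(y)-B_k(y)|\dy
=\frac{\varepsilon_k^d}{\rho^d}\int_{B_{\rho/\varepsilon_k}(x/\varepsilon_k)}|A(z)-B(z)|\dz.
\end{equation*}
Setting $R_k=\rho/\varepsilon_k$ and $S_k=R_k+|x|/\varepsilon_k=R_k(1+|x|/\rho)$, we have the inclusion $B_{R_k}(x/\varepsilon_k)\subset B_{S_k}$ so that the last expression is bounded by
\begin{equation*}
\Bigl(1+\tfrac{|x|}{\rho}\Bigr)^d\cdot\frac{1}{S_k^d}\int_{B_{S_k}}|A(z)-B(z)|\dz.
\end{equation*}
Since $S_k\to+\infty$ as $k\to+\infty$, hypothesis \eqref{hypo-3b-G} makes the last factor vanish, so the $\limsup_{k\to+\infty}$ of the original quantity is $0$ for every $\rho>0$ and every $x\in\DD$. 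A fortiori the iterated limit in \eqref{hypo-H} is zero, independently of any choice of $E$.

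Thus the hypotheses of Theorem \ref{Corgen-H} hold on every bounded open set $\DD$, which yields the conclusion. There is no substantive obstacle: the only thing to watch is the shift in the center of the ball in the rescaled variable, which is absorbed by the harmless multiplicative factor $(1+|x|/\rho)^d$ because $x$ is fixed while $S_k\to+\infty$. Notice that the argument uses the full sequence $\varepsilon_k$ (not a subsequence), because Theorem \ref{Corgen-H} identifies the $H$-limit uniquely; and since this holds for every sequence $\varepsilon_k\to 0^+$, $B$ is indeed homogenizable in the sense of the definition preceding the corollary.
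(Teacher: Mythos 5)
Your proof is correct and takes essentially the same route as the paper's: the paper's proof is the one-line remark that \eqref{hypo-3b-G} implies \eqref{hypo-H} "thanks to the change of variables as in \eqref{hypo-4} and \eqref{hypo-3}," which is precisely the rescaling $y=\varepsilon_k z$ and the ball-inclusion estimate you carry out explicitly, followed by an application of Theorem \ref{Corgen-H}. You have simply spelled out what the paper leaves as a reference to the analogous computation in Section \ref{hom-sec}.
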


\begin{proof}
It suffices to note that \eqref{hypo-3b-G} guarantees the validity of \eqref{hypo-H} thanks to the change of variables as in \eqref{hypo-4} and \eqref{hypo-3}.
\end{proof}

Finally, we note that the same remarks as for $G$-convergence at the end of Section \ref{G-conv} also hold for the stochastic homogenization by $H$-convergence.

\section{Perforated domains with Neumann boundary conditions}
We now show how we can derive a stability result for the homogenization of minimum problems in perforated domains. We only treat the quadratic case in order to concentrate on the role of the perforations.
\subsection{General assumptions}
We consider closed sets $E\subset \mathbb R^d$ such that there exists an extension operator $T: H^1(\mathbb R^d\setminus E)\to H^1(\mathbb R^d)$ satisfying the following properties
\begin{equation}\label{id-u}
Tu(x)= u(x) \hbox{ for almost all } x\in\mathbb R^d\setminus E,
\end{equation}
\begin{equation}\label{extDu}
\|\nabla Tu\|_{L^2(\mathbb R^d)}\le C \|\nabla u\|_{L^2(\mathbb R^d\setminus E)}
\end{equation}
for some constant $C>0$. Note that \eqref{extDu} implies the connectedness of $\mathbb R^d\setminus E$. 
In the case of bounded domains $\DD$, the definition of extension operator is more technically complex since it must take into account that $\partial \DD$ may disconnect the set $E$; we refer to \cite{ACPDMP} for details.

Properties \eqref{id-u} and \eqref{extDu} are satisfied if $\mathbb R^d\setminus E$ is connected and with Lipschitz boundary \cite{ACPDMP}. A non-periodic example is given by sets $E=\sum_{i} E_i$ such that $E_i\subset B_{r_i}(x_i)$ with $B_{r_i}(x_i)\cap B_{r_j}(x_j)=\emptyset$ and such that there exists a constant $C>0$ and extension operators $T_i: H^1(B_{r_i}(x_i)\setminus E_i)\to H^1(B_{r_i}(x_i))$ such that $T_iu(x)= u(x)$ for almost all $x\in B_{r_i}(x_i)\setminus E_i$ and 
$\|\nabla T_iu\|_{L^2(B_{r_i}(x_i))}\le C \|\nabla u\|_{L^2(B_{r_i}(x_i)\setminus E_i)}$.
The simplest situation is when $E_i$ is a smaller ball concentric with $B_{r_i}(x_i)$.
 To check this it suffices to construct an extension operator when $E$ is the ball of radius $2$ and centre $0$.  We then define
$$
T(u)(x)=\begin{cases} \overline u & \hbox{ if }|x|<1\\
(|x|-1)u(L(x)) +(2-|x|) \overline u & \hbox{ if }1\le |x|<2 \\
u(x) & \hbox{ if }|x|>2,
\end{cases}
$$
where $L(x)= (4-|x|){x\over|x|}$ for $1<|x|<2$ and
$$
\overline u = {1\over |B_3\setminus B_2|}\int_{B_3\setminus B_2} u(x)\,dx.
$$
Then 
$$
\int_{B_2}|\nabla Tu|^2\dx\le C'\int_{B_3\setminus B_2}(|\nabla u|^2+ |u-\overline u|^2)\dx,
$$
and \eqref{extDu} follows by the Poincar\'e inequality. Since the constant $C$ in \eqref{extDu} for $E=B_2$ is invariant by homothety, we can define an extension operator separately for each ball maintaining the same constant.

Note that if $E$ satisfies \eqref{id-u} and \eqref{extDu}, then for all $\e>0$ there exists an extension operator $T_{\e}$ for $\e E$ satisfying  \eqref{id-u} and \eqref{extDu} with the same constant $C$ independent of $\e$.

In order to avoid trivial limits we suppose that
 \begin{equation}\label{dens-E}
\liminf_{R\to+\infty}{|B_R\setminus E|\over R^d}>0.
\end{equation}
 This condition implies that the weak limits $\psi$ of characteristic functions $1_{\mathbb R^d\setminus \e E}$ satisfy $\psi>0$ almost everywhere.

\subsection{Stability for perforated domains}
We consider functionals
\begin{equation}\label{quadFe}
F^E_\e(u)= \int_{\mathbb R^d\setminus \e E} |\nabla u|^2\dx
\end{equation} 
defined on $H^1(\mathbb R^d)$.  
Our goal is to compare the asymptotic behaviour of such functionals for different $E$ in the spirit of the stability results. The corresponding $\Gamma$-limits will be computed in the space $H^1(\mathbb R^d)$ endowed with the $L^2_{\rm loc}$-topology. In the following section we will examine the relation between $\Gamma$-convergence and the solutions to some minimum problems on perforated domains.

\begin{theorem}[Perturbations of perforated domains]\label{Perfdo}
 Let $E$ and $E'$ be two closed sets satisfying the extension properties 
\eqref{id-u} and \eqref{extDu}, and such that 
\begin{equation}\label{ci-ci}
\lim_{R\to+\infty} {1\over R^d} |(E\triangle E')\cap B_R|=0,
\end{equation}
where $E\triangle E':=(E\setminus E')\cup (E'\setminus E)$ denotes the symmetric difference between the sets.
Suppose that $E$ satisfies \eqref{dens-E} and that 
$\Gamma$-limit of $F^E_{\e_k}$ as $k\to+\infty$ exists for some $\e_k\to0$.
Then there exists the $\Gamma$-limit of $F^{E'}_{\e_k}$ as $k\to+\infty$, and the two $\Gamma$-limits are equal.
\end{theorem}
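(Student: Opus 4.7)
The plan is to adapt the closure argument of Theorem \ref{main} to this degenerate, perforated setting, using the extension operator \eqref{extDu} as a replacement for the uniform coercivity of the class $\mathcal F_{\alpha,\beta}$. First, one checks that $E'$ inherits \eqref{dens-E} from $E$: indeed $|B_R\setminus E'|\ge|B_R\setminus E|-|(E\triangle E')\cap B_R|$, and by \eqref{ci-ci} the last term is $o(R^d)$, so $\liminf_{R\to+\infty}|B_R\setminus E'|/R^d>0$. By the $\Gamma$-compactness of such sequences (obtained by combining the uniform $H^1_{\rm loc}$ bounds provided by \eqref{extDu} with a diagonal extraction on $U\in\AA$), I extract a subsequence along which $F^{E'}_{\e_k}$ $\Gamma$-converges to some $F'$; by the Urysohn property it will suffice to prove $F'=F$.

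The comparison is done locally, exactly in the style of Theorems \ref{derit} and \ref{main}. For $x_0\in\mathbb R^d$, $\rho>0$ and $\xi\in\mathbb R^d$ introduce
\[
m^E_k(x_0,\rho,\xi):=\min\Big\{\int_{B_\rho(x_0)\setminus\e_kE}|\nabla v|^2\dx:\,v-\ell_\xi\in H^1_0(B_\rho(x_0))\Big\},
\]
and analogously $m^{E'}_k$. If $u_k$ realises $m^E_k$, let $\hat u_k$ be its canonical extension into $\e_kE$ via the bounded-domain extension operator of \cite{ACPDMP}; then $\hat u_k-\ell_\xi\in H^1_0(B_\rho(x_0))$ is admissible for $m^{E'}_k$, and the partition $B_\rho(x_0)\setminus\e_kE'=(B_\rho(x_0)\setminus\e_k(E\cup E'))\cup(\e_k(E\setminus E')\cap B_\rho(x_0))$ gives
\[
m^{E'}_k(x_0,\rho,\xi)\le m^E_k(x_0,\rho,\xi)+\int_{\e_k(E\setminus E')\cap B_\rho(x_0)}|\nabla\hat u_k|^2\dx.
\]

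To kill the error term, I invoke a perforated-domain version of Theorem \ref{16.1} (cf.~\cite{ACPDMP}): there exist $\eta>0$ and $C>0$, independent of $k,\rho$ and of the perforation, with $\frac{1}{|B_\rho|}\int_{B_\rho(x_0)}|\nabla\hat u_k|^{2+\eta}\dx\le C|\xi|^{2+\eta}$. H\"older's inequality then gives
\[
\frac{1}{|B_\rho|}\int_{\e_k(E\setminus E')\cap B_\rho(x_0)}|\nabla\hat u_k|^2\dx\le C'|\xi|^2\Big(\frac{|\e_k(E\setminus E')\cap B_\rho(x_0)|}{|B_\rho|}\Big)^{\eta/(2+\eta)},
\]
and the last factor tends to $0$ as $k\to+\infty$ for fixed $x_0,\rho$: the rescaling $y=x/\e_k$ and the inclusion $B_{\rho/\e_k}(x_0/\e_k)\subset B_{(|x_0|+\rho)/\e_k}$ reduce it to the density $|(E\triangle E')\cap B_R|/R^d$ with $R=(|x_0|+\rho)/\e_k\to+\infty$, which vanishes by \eqref{ci-ci}.

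Passing to $\limsup_k$ then $\lim_{\rho\to 0^+}$ and exchanging the roles of $E$ and $E'$, the derivation characterisation of Theorem \ref{derit} in the perforated setting forces the integrands of $F$ and $F'$ to coincide almost everywhere, so $F=F'$. The principal obstacle I anticipate is the uniform Meyers-type higher integrability for $\hat u_k$; the crucial point is that the constant in the extension bound \eqref{extDu} is invariant under the rescaling $x\mapsto\e_kx$, so that the reverse-H\"older argument produces a constant independent of $\e_k$.
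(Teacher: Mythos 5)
Your strategy is genuinely different from the paper's, and while the high-level idea (localization plus a Meyers-type estimate) is sound in spirit, there are gaps that prevent it from closing as written.

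The paper never works directly with the degenerate functionals $F^E_{\e_k}$ in the localization/derivation machinery. Instead it introduces the nondegenerate coefficients $a^{E,n}$ equal to $1$ off $E$ and $1/n$ on $E$, so that the corresponding integrands lie in $\mathcal{F}_{1/n,1}$, where Theorem~\ref{comp}, Theorem~\ref{derivation}, Theorem~\ref{16.1}, and Corollary~\ref{Corgen} all apply verbatim. Condition \eqref{ci-ci} is then checked to give \eqref{hypo} for these approximating integrands (this computation is close to what you did, and is fine). The transfer back to $n=\infty$ is the content of Lemma~\ref{lemmadomper}: using the extension operator $T_{\e}$ one proves the two-sided sandwich $\Gamma\hbox{-}\liminf F^{E}_{\e_k}\le\Gamma\hbox{-}\liminf F^{E,n}_{\e_k}\le(1+C^2/n)\,\Gamma\hbox{-}\liminf F^{E}_{\e_k}$ (and the same for limsup), so that letting $n\to\infty$ pins down the limit. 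Your proposal, by contrast, applies the derivation-formula and Meyers machinery directly to the perforated (degenerate) functionals, which the paper deliberately avoids.

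Concretely, the gaps in your argument are the following. First, Theorem~\ref{derit} and Theorem~\ref{derivation} are stated and proved for integrands in $\mathcal{F}_{\alpha,\beta}$ with $\alpha>0$; they do not automatically carry over to functionals of the form $\int_{B\setminus\e E}|\nabla v|^2$, and you would need to prove a perforated-domain version of the derivation characterization, which is a nontrivial result in its own right (the localization of minimum values is exactly where equicoercivity is used). Second, and this is the point you flag as the ``principal obstacle,'' the uniform Meyers estimate $\frac1{|B_\rho|}\int_{B_\rho(x_0)}|\nabla\hat u_k|^{2+\eta}\dx\le C|\xi|^{2+\eta}$ for the extension $\hat u_k$ is not an established result: $\hat u_k$ is not a minimizer of any uniformly coercive problem on $B_\rho(x_0)$, it is a (possibly nonvariational) extension of a minimizer on the perforated set, and the reverse--H\"older/Gehring machinery does not directly apply. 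Scaling invariance of the extension constant in \eqref{extDu} is necessary but not sufficient; one would still need a Caccioppoli-type inequality compatible with the Neumann boundary on $\partial(\e_k E)$ with constants independent of the geometry. The paper's route via $a^{E,n}$ circumvents both issues: Meyers is applied only to the uniformly elliptic problems with integrand $a^{E,n}$, and Lemma~\ref{lemmadomper} does the work that your extension-plus-higher-integrability step was meant to do.

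So the approach is not wrong in principle, but it relies on two lemmas (degenerate derivation formula, uniform Meyers through the extension) that are at least as hard as the theorem itself and are not in the paper. The nondegenerate approximation plus the sandwich of Lemma~\ref{lemmadomper} is a cleaner and essentially self-contained alternative.
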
 

\begin{remark}\rm An example in which \eqref{ci-ci} is satisfied is when $E$ is a $1$-periodic set and $E'=\Phi(E)$, where $\Phi:\mathbb R^d\to \mathbb R^d$ is a diffeomorphism with 
\begin{equation}
\lim_{|x|\to+\infty} |\Phi(x)-x|=0.
\end{equation}
This is a consequence of the fact that $|(E\triangle \Phi(E))\cap (k+[0,1]^d)|\to 0$ as $k\in\mathbb Z^d$ tends to $\infty$,
which is obtained by the periodicity of $E$ and  the Dominated Convergence Theorem.

Another example in which  \eqref{ci-ci} is satisfied trivially is by taking $E=\emptyset$ and $E'$ a set ``vanishing at infinity''; that is, such that
\begin{equation}
\lim_{R\to+\infty} {1\over R^d} |E'\cap B_R|=0.
\end{equation}

\end{remark}
 
 In order to refer to the setting of the previous sections, we introduce some coefficients as follows.
 We first set
\begin{equation}
a^E(x)= \begin{cases} 1 &\hbox{ if } x\in \mathbb R^d\setminus E\cr
0 &\hbox{ if } x\in E\,,
\end{cases}
\end{equation} 
so that we have
\begin{equation}\label{Esea}
F^E_\e(u)= \int_{\mathbb R^d} a^E\Big({x\over\e}\Big)|\nabla u|^2\dx
\end{equation} 
on $H^1(\mathbb R^d)$.

Moreover, for all $n\in\mathbb N$ we define
\begin{equation}
a^{E,n}(x)= \begin{cases} 1 &\hbox{ if } x\in \mathbb R^d\setminus E\cr
\frac1n &\hbox{ if } x\in E
\end{cases}\end{equation} 
and 
\begin{equation}\label{feta}
F^{E,n}_\e(u)= \int_{\mathbb R^d} a^{E,n}\Big({x\over\e}\Big)|\nabla u|^2\dx
\end{equation} 
on $H^1(\mathbb R^d)$.
We introduce analogous coefficients and functionals with $E'$ in the place of $E$.

Note that the integrands of \eqref{feta} belong to the class $\FF$ with $p=2$, $\alpha=\frac1n$, and $\beta=1$, while the integrands of \eqref{Esea} do not satisfy a growth condition from below on the perforation.

\begin{lemma}\label{lemmadomper} Let $E$ be a set satisfying extension properties 
\eqref{id-u} and \eqref{extDu} and density property \eqref{dens-E}. Then for every $n\in\mathbb N$ and for every sequence $\e_k\to 0$ we have 
\begin{eqnarray}\label{qd}
\Gamma\hbox{-}\liminf_{k\to +\infty} F^E_{\e_k}\le  \Gamma\hbox{-}\liminf_{k\to +\infty} F^{E,n}_{\e_k}\le \Big(1+\frac{C^2}n\Big)\Gamma\hbox{-}\liminf_{k\to +\infty} F^E_{\e_k}\\ \label{eqd}
\Gamma\hbox{-}\limsup_{k\to +\infty} F^E_{\e_k}\le  \Gamma\hbox{-}\limsup_{k\to +\infty} F^{E,\eta}_{\e_k}\le \Big(1+\frac{C^2}n\Big)\Gamma\hbox{-}\limsup_{k\to +\infty} F^E_{\e_k},
\end{eqnarray}
where $C$ is the constant in \eqref{extDu}.
\end{lemma}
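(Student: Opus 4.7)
The plan is to treat the left and right inequalities in \eqref{qd}--\eqref{eqd} separately. The left inequalities follow immediately from the pointwise bound $F^E_{\varepsilon}(w) \le F^{E,n}_{\varepsilon}(w)$, which holds because $a^E(y) \le a^{E,n}(y)$ for every $y$: taking the $\liminf$ (resp.~$\limsup$) along any sequence $w_k \to u$ in $L^2_{\rm loc}$ and then the infimum over such sequences transfers the bound to the two $\Gamma$-quantities at $u$.

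For the right inequalities, the strategy is to start from a sequence $u_k \to u$ in $L^2_{\rm loc}$ that realizes either $\Gamma\hbox{-}\liminf_{k\to+\infty} F^E_{\varepsilon_k}(u)$ or $\Gamma\hbox{-}\limsup_{k\to+\infty} F^E_{\varepsilon_k}(u)$ (the case of an infinite value being trivial) and to modify it on the perforations so as to control the gradient there. The natural modification is
$$
v_k := T_{\varepsilon_k}\bigl(u_k\big|_{\mathbb R^d\setminus\varepsilon_k E}\bigr),
$$
where $T_{\varepsilon_k}$ denotes the extension operator associated with $\varepsilon_k E$, which by the remark following \eqref{extDu} satisfies \eqref{id-u}--\eqref{extDu} with a constant $C$ independent of $\varepsilon_k$. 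Then $v_k = u_k$ on $\mathbb R^d\setminus\varepsilon_k E$ by \eqref{id-u}, and \eqref{extDu} gives $\|\nabla v_k\|_{L^2(\mathbb R^d)}^2 \le C^2\, F^E_{\varepsilon_k}(u_k)$. Splitting the integral defining $F^{E,n}_{\varepsilon_k}(v_k)$ over $\mathbb R^d\setminus\varepsilon_k E$ and $\varepsilon_k E$ and using these two facts immediately yields the pointwise estimate
$$
F^{E,n}_{\varepsilon_k}(v_k) \;\le\; \Bigl(1+\frac{C^2}{n}\Bigr)\, F^E_{\varepsilon_k}(u_k).
$$
Once $v_k \to u$ in $L^2_{\rm loc}$ is established, $v_k$ is admissible in the $\Gamma\hbox{-}\liminf$ and $\Gamma\hbox{-}\limsup$ of $F^{E,n}_{\varepsilon_k}$ at $u$, and passing to the liminf or limsup in the displayed inequality produces the desired right-hand inequalities.

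The key technical obstacle is precisely the convergence $v_k \to u$ in $L^2_{\rm loc}$, because the modification occurs on the sets $\varepsilon_k E$, which do not shrink with $k$. The plan here is as follows: the gradient bound above, combined with the identity $v_k = u_k$ on $\mathbb R^d\setminus\varepsilon_k E$ and a Poincar\'e inequality on balls, yields equiboundedness of $v_k$ in $H^1_{\rm loc}(\mathbb R^d)$, where the density condition \eqref{dens-E} enters to guarantee that $|B_R\setminus\varepsilon_k E| \ge c\,|B_R|$ uniformly in $k$, so that one can control the mean of $v_k$ on $B_R$ via its values on $B_R\setminus\varepsilon_k E$. Passing to a subsequence one obtains $v_{k_j} \rightharpoonup v$ weakly in $H^1_{\rm loc}$ and strongly in $L^2_{\rm loc}$, and, along a further subsequence, $\chi_k := 1_{\mathbb R^d\setminus\varepsilon_k E} \weakstar \chi$ in $L^\infty$, with $\chi > 0$ a.e.\ by the observation following \eqref{dens-E}. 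Since $v_k \chi_k = u_k \chi_k$, and each side is the product of an $L^2_{\rm loc}$-strongly convergent sequence with a weakly-$\ast$ convergent one, one may pass to the limit and identify $v\chi = u\chi$ a.e., hence $v = u$. Uniqueness of the limit then promotes the convergence to the full sequence $v_k$, completing the argument.
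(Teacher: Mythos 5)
Your proof follows essentially the same strategy as the paper's: the left inequalities come from $a^E\le a^{E,n}$, and the right ones from passing to the extensions $v_k = T_{\e_k}(u_k|_{\mathbb R^d\setminus\e_k E})$ of a near-optimal sequence, bounding $F^{E,n}_{\e_k}(v_k)\le\bigl(1+\tfrac{C^2}{n}\bigr)F^E_{\e_k}(u_k)$ via \eqref{extDu}, and identifying the $L^2_{\rm loc}$-limit of $v_k$ with $u$ through the a.e.\ positivity of the weak$^*$ limit $\psi$ of $1_{\mathbb R^d\setminus\e_k E}$. You spell out more explicitly than the paper the compactness step (Poincar\'e plus the density condition \eqref{dens-E}) that yields the convergence of $v_k$ — the paper merely asserts it — and the only minor imprecision is that for the $\Gamma\hbox{-}\liminf$ half one works along the subsequence realizing the $\liminf$ and completes $v_{k_j}$ to a full admissible sequence (e.g.\ by inserting $u$ at the missing indices), rather than upgrading to convergence of the whole sequence $v_k$.
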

\begin{proof}
We only prove \eqref{qd}, the proof of \eqref{eqd} being analogous. The first inequality is trivial; to prove the second inequality we fix $u$ and a sequence $u_k$ converging to $u$ in $L^2_{\rm loc}(\mathbb R^d)$ such that  
\[
\Gamma\hbox{-}\liminf_{k\to +\infty} F^E_{\e_k}(u)= \liminf_{k\to +\infty} F^E_{\e_k}(u_k).\]
We set $v_k= T_{\e_k} (u_k|_{\mathbb R^d\setminus \e_k E}) $  and note that, for each smooth connected $U\in\mathcal U$, up to subsequences, we can suppose that $v_k$ converges to some $v$ in $L^2(U)$. This follows from the bound on $\nabla v_k$, which implies pre-compactness up to a sequence of additive constants, and the fact that $v_k=u_k$ on $\mathbb  R^d\setminus \e_k E$, which implies that this sequence of constants is bounded, and hence pre-compactness without addition of a constant. Note that from $0=(u_k-v_k)1_{\mathbb  R^d\setminus \e_k E}$ and the convergence of $u_k$ to $u$ and $v_k$ to $v$ we deduce that $0=(u-v)\psi$ on each such $U$, where $\psi$ is a weak$^*$ limit in $L^\infty(\mathbb R^d)$ of $1_{\mathbb  R^d\setminus \e_k E}$.
Since $\psi>0$ almost everywhere by \eqref{dens-E} we deduce that $u=v$ almost everywhere in $\mathbb R^d$. Then 
\begin{eqnarray*} 
\Gamma\hbox{-}\liminf_{k\to +\infty} F^{E,n}_{\e_k}(u)&\le& \liminf_{k\to +\infty} F^{E,n}_{\e_k}(v_k)= \liminf_{k\to +\infty} \Big( F^{E}_{\e_k}(v_k) +{1\over n}\int_{\e_k E} |\nabla v_k|^2\dx\Big)\\
&\le& \liminf_{k\to +\infty}\Big(  F^{E}_{\e_k}(u_k) +{1\over n}\int_{\mathbb R^d} |\nabla v_k|^2\dx\Big)\\
&\le& \liminf_{k\to +\infty}\Big(  F^{E}_{\e_k}(u_k)  +{1\over n} C^2\int_{\mathbb R^d\setminus \e E} |\nabla u_k|^2\dx\Big)
\\ &=& \Big(1+\frac{C^2}n\Big)\,\Gamma\hbox{-}\liminf_{k\to +\infty} F^E_{\e_k}(u),
\end{eqnarray*}
which proves the claim.
\end{proof}

 Note  that the argument in the first part of the proof of the lemma shows that  if \eqref{dens-E} holds and  we have two such extension operators $T_{\e}$ and $\widetilde T_{\e}$,  if we have the strong convergence of $T_{\e} u_\e$ to $u$ and $\widetilde T_{\e} u_\e$ to $\widetilde u$ in $L^1(B_R)$ for all $R$, then we have
\begin{eqnarray*}
\psi u=\lim_{\e\to 0} 1_{B_R\setminus \e E} T_{\e} u_\e= \lim_{\e\to 0} 1_{B_R\setminus \e E} u_\e = \lim_{\e\to 0} 1_{B_R\setminus \e E} \widetilde T_{\e} u_\e= \psi \widetilde u\quad\hbox{ in $B_R$,}
\end{eqnarray*}
 which implies $\widetilde u= u$ almost everywhere since $\psi>0$ and $R$ is arbitrary.

\begin{proof}[Proof of Theorem {\rm\ref{Perfdo}}]
With fixed $n\in\mathbb N$, we define \[f^n_k(x,\xi)= a^{E,n}\big({x\over\e_k}\big)|\xi|^2\qquad \hbox{ and }\qquad g^n_k(y,\xi)= a^{E',n}\big({x\over\e_k}\big)|\xi|^2.\]
 We check that $f_k=f^n_k$ and $g_k=f^n_k$ satisfy \eqref{hypo} in Corollary \ref{Corgen}.
Since $B_{\rho/\e_k}(x/\e_k)\subset B_{(|x|+\rho)/\e_k}$, we deduce that for each fixed $\rho>0$ we have
\begin{eqnarray*}
&&\limsup_{k\to+\infty}{1\over\rho^d}\int_{B_\rho(x)}\sup_{|\xi|\le t} |f_k(y,\xi)-g_k(y,\xi)|\dy\\
&=& t^2\limsup_{k\to+\infty}{1\over\rho^d}\int_{B_\rho(x)} \Big|a^{E,n}({y\over\e_k})-a^{E',n}({y\over\e_k})\Big|\dy\\
&=& t^2\limsup_{k\to+\infty}{\e_k^d\over\rho^d}\int_{B_{\rho\over \e_k}({x\over\e_k})}  |a^{E,\eta}(y)-a^{E',\eta}(y)|dy\\
&\le&t^2{(|x|+\rho)^d\over \rho^d}\lim_{R\to+\infty} {1\over R^d}\int_{B_R} |a^{E,\eta}(y)-a^{E',\eta}(y)|dy\\
&=& t^2 {(|x|+\rho)^d\over \rho^d}\Big(1-{1\over n}\Big)\lim_{R\to+\infty}  {|(E\triangle E')\cap B_R|\over R^d}=0
\end{eqnarray*} by \eqref{ci-ci},
and \eqref{hypo} is satisfied.
Thanks to Theorem \ref{comp} the convergence hypothesis of Corollary \ref{Corgen} is satisfied up to passing to a subsequence. Hence, the integrands corresponding to the $\Gamma$-limits of the localized functionals $F^{E,n}_{\e_k}(\cdot, U)$ and $F^{E',n}_{\e_k}(\cdot, U)$ coincide.
Thanks to  Remark \ref{unb-comp}  this ensures the existence and equality of the corresponding $\Gamma$-limits on $\mathbb R^d$; that is, in the notation \eqref{feta}, that
\[
 \Gamma\hbox{-}\lim_{k\to+\infty} F^{E,n}_{\e_k}= \Gamma\hbox{-}\lim_{k\to+\infty} F^{E',n}_{\e_k}.\]
 By the existence of the limit for $E$ and Lemma \ref{lemmadomper}, we have
 \begin{eqnarray} \label{dis-0}\nonumber
\Gamma\hbox{-}\lim_{k\to+\infty} F^{E,n}_{\e_k}&\le& \Big(1+\frac{C^2}n\Big)\Gamma\hbox{-}\liminf_{k\to+\infty} F^E_{\e_k}\\
 &\le& \Big(1+\frac{C^2}n\Big)\Gamma\hbox{-}\limsup_{k\to+\infty} F^E_{\e_k}\le \Big(1+\frac{C^2}n\Big)\Gamma\hbox{-}\lim_{k\to+\infty}F^{E,n}_{\e_k}.
  \end{eqnarray} 
The same argument applied to $E'$ gives 
  \begin{eqnarray}  \label{dis-1}\nonumber
 \Gamma\hbox{-}\lim_{k\to+\infty} F^{E',n}_{\e_k}&\le& \Big(1+\frac{C^2}n\Big)\Gamma\hbox{-}\liminf_{k\to+\infty} F^{E'}_{\e_k}\\
 &\le& \Big(1+\frac{C^2}n\Big)\Gamma\hbox{-}\limsup_{k\to+\infty} F^{E'}_{\e_k}\le \Big(1+\frac{C^2}n\Big)\Gamma\hbox{-}\lim_{k\to+\infty}F^{E',n}_{\e_k} . \end{eqnarray}  
   We now set
\[
  F_0=\lim_{n\to+\infty}\big(\Gamma\hbox{-}\lim_{k\to+\infty} F^{E,n}_{\e_k}\big)= \lim_{n\to+\infty}\big(\Gamma\hbox{-}\lim_{k\to+\infty}  F^{E',n}_{\e_k}\big),\]
which exists since the functionals are decreasing in $n$. Letting $n\to +\infty$ in \eqref{dis-0} and \eqref{dis-1} we then have the existence of the limits, and equality
\[
\Gamma\hbox{-}\lim_{k\to+\infty} F^{E}_{\e_k}=\Gamma\hbox{-}\lim_{k\to+\infty} F^{E'}_{\e_k}= F_0.\]
Since the limit is independent of the subsequence used in the application of the compactness theorem, by the Urysohn property of $\Gamma$-convergence (see \cite{DM}) we conclude that the equality holds for the original sequence, and the claim. 
\end{proof}

\subsection{Convergence of solutions of minimum problems}
 Let $E\subset \mathbb R^d$ be a closed set satisfying \eqref{id-u}, \eqref{extDu} and such that there exists $c_0$ for which
 \begin{equation}\label{dens-E-s}
\liminf_{R\to+\infty}{|B_R\setminus E|\over R^d}\ge c_0>0,
\end{equation}
 which is a slightly stronger version of  
 \eqref{dens-E}. We suppose that the $\Gamma$-limit  $F_{\rm hom}^E$ of $ F^E_{\e}$ as $\e\to0$ exists.
 In particular this holds if $\mathbb R^d\setminus E$ is a connected Lipschitz domain.

 We fix $\lambda>0$, $f\in  L^2(\mathbb R^d)$,
and we study the convergence of solutions of minimum problems for energies of the form
\begin{equation}\label{Felambda}
F^\lambda_\e(u)=
\int_{\mathbb R^d\setminus \e E}(|\nabla u|^2+\lambda u^2- 2fu)\dx
\end{equation}
defined on $H^1(\mathbb R^d\setminus\e E)$. 

\begin{proposition}[Convergence of minimum problems]\label{pro-min}
 Let $E$ satisfy the conditions above, and let $T$ and $T_\e$ be the corresponding extension operators. Assume that there exists the weak limit $\psi$ of the characteristic functions $1_{\mathbb R^d\setminus \e E}$. Let $u_\e$ be the minimizer of $F^\lambda_\e$ in \eqref{Felambda}. Then $T_\e(u_\e)$ converge weakly in $H^1(\mathbb R^d)$ to the minimizer $u$ of 
\begin{equation}\label{Felambdahom}
F^\lambda_{\rm hom}(u)=F_{\rm hom}^E(u)+\int_{\mathbb R^d}(\lambda u^2-2 fu)\psi\dx.
\end{equation}
\end{proposition}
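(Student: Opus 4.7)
The plan is to transport the minimization problem onto the fixed space $H^1(\mathbb R^d)$ via $T_\e$, recognize the resulting functionals as a continuous perturbation of the sequence $F^E_\e$, and then conclude by strict convexity of the limit. The first step is an a priori bound: testing with the admissible function $0$ gives $F^\lambda_\e(u_\e)\le 0$, and Young's inequality then yields
\begin{equation*}
\int_{\mathbb R^d\setminus \e E}\bigl(|\nabla u_\e|^2+\lambda u_\e^2\bigr)\dx\le C\|f\|_{L^2}^2.
\end{equation*}
The extension property \eqref{extDu} controls $\|\nabla T_\e u_\e\|_{L^2(\mathbb R^d)}$, and the block-by-block form of $T_\e$ described after \eqref{dens-E} also yields a uniform bound on $\|T_\e u_\e\|_{L^2(\mathbb R^d)}$; hence $\{T_\e u_\e\}$ is equi-bounded in $H^1(\mathbb R^d)$.

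Next, introduce on $H^1(\mathbb R^d)$ the ``lifted'' functionals
\begin{equation*}
\tilde F^\lambda_\e(v)=F^E_\e(v)+\int_{\mathbb R^d}(\lambda v^2-2fv)\,\psi_\e\dx,\qquad \psi_\e(x)=1_{\mathbb R^d\setminus \e E}(x).
\end{equation*}
Because both the gradient term and the perturbation vanish on $\e E$, one has $\tilde F^\lambda_\e(v)=F^\lambda_\e(v|_{\mathbb R^d\setminus \e E})$, and in particular $T_\e u_\e$ is a minimizer of $\tilde F^\lambda_\e$ over $H^1(\mathbb R^d)$. Similarly, the candidate limit functional is $F^\lambda_{\rm hom}$ as given in \eqref{Felambdahom}. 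The hypothesis gives $F^E_\e\stackrel{\Gamma}{\to}F^E_{\rm hom}$, while $\psi_\e\weakstar\psi$ in $L^\infty(\mathbb R^d)$; combined with the strong $L^2_{\rm loc}$ convergence of equibounded weakly $H^1$-convergent sequences, this will show that the lower-order piece is a \emph{continuous} perturbation of the $\Gamma$-limit, so that $\tilde F^\lambda_\e\stackrel{\Gamma}{\to} F^\lambda_{\rm hom}$ with respect to weak $H^1(\mathbb R^d)$ convergence.

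The conclusion is then standard: since $\lambda>0$ and $\psi>0$ almost everywhere by \eqref{dens-E-s}, the limit $F^\lambda_{\rm hom}$ is strictly convex and coercive on $H^1(\mathbb R^d)$, hence possesses a unique minimizer $u$. Any weakly $H^1$-convergent subsequence of $\{T_\e u_\e\}$ has a limit $u^*$ which, by the fundamental theorem of $\Gamma$-convergence applied to the equiboundedness above, is a minimizer of $F^\lambda_{\rm hom}$; uniqueness gives $u^*=u$, and the Urysohn property upgrades this to the full sequence. The main obstacle is the global passage to the limit in $\int(\lambda v_\e^2-2fv_\e)\psi_\e\dx$: weak $H^1(\mathbb R^d)$ convergence only controls local behaviour, so one must ensure tightness at infinity, which is guaranteed by the $\lambda u^2$-coercivity together with $f\in L^2(\mathbb R^d)$ (so that the $L^2$-mass of $u_\e$ cannot escape), and one must check that $T_\e$ preserves the $L^2$ norm uniformly in $\e$, not only the gradient norm.
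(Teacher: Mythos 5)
Your overall plan matches the paper's in spirit---lift via $T_\e$, split off the lower-order terms, use $\Gamma$-convergence of $F^E_\e$---but there is a real gap at exactly the point you flag at the end, and you do not close it.

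The claim that $v\mapsto\int(\lambda v^2-2fv)\,\psi_\e\,dx$ is a \emph{continuous} perturbation with respect to weak $H^1(\mathbb R^d)$ convergence is false as stated. Weak $H^1(\mathbb R^d)$ convergence does not imply $\int v_\e^2\,dx\to\int v^2\,dx$: consider $v_\e=v+w(\cdot-R_\e)$ with $R_\e\to+\infty$, which converges weakly in $H^1$ to $v$ while the global $L^2$ norm jumps by $\|w\|_{L^2}^2$. You correctly observe that tightness at infinity is what is missing, but then assert it follows from ``$\lambda u^2$-coercivity together with $f\in L^2$''. That is not enough: coercivity gives equi-boundedness of $\|u_\e\|_{L^2}$, not tightness. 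To actually get tightness one would have to exploit the Euler--Lagrange equation (e.g.\ test $-\mathop{\rm div}(\nabla u_\e)+\lambda u_\e=f$ on $\mathbb R^d\setminus\e E$ with $\phi^2 u_\e$ where $\phi$ is a cut-off vanishing on $B_R$, and use that $\|f\|_{L^2(\mathbb R^d\setminus B_R)}\to0$). This is a genuine extra step that your proposal only gestures at. Likewise, the uniform $L^2(\mathbb R^d)$ bound on $T_\e u_\e$ (not just on its gradient) needs to be argued; the extension operator in \eqref{extDu} controls gradients only.

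The paper sidesteps tightness entirely by a different decomposition: it completes the square, writing
$\lambda u^2-2fu=(\sqrt\lambda\,u-f/\sqrt\lambda)^2-f^2/\lambda$. After completing the square, the gradient term and the squared term are nonnegative, so for the $\Gamma$-liminf one may restrict the integral to any bounded set, where $L^2_{\rm loc}$-convergence of $T_\e u_\e$ and weak$^*$ convergence of $1_{\mathbb R^d\setminus\e E}$ suffice, and then let the bounded set fill $\mathbb R^d$; the remaining term $-\int 1_{\mathbb R^d\setminus\e E}\,f^2/\lambda\,dx$ passes to the limit directly because $f^2/\lambda\in L^1(\mathbb R^d)$ and $1_{\mathbb R^d\setminus\e E}\weakstar\psi$ in $L^\infty$. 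For the $\Gamma$-limsup, recovery sequences with compact support converging strongly in $L^2$ are used, so the lower-order terms pass to the limit by construction, and then density and strong $H^1$-continuity of $F^\lambda_{\rm hom}$ close the gap. This is why the paper never needs to prove tightness. Your route can be made rigorous, but it costs more (an elliptic estimate giving uniform decay at infinity) than the paper's one-sided completed-square argument, and as written the continuity claim is unjustified.
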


Theorem \ref{Perfdo} will guarantee the stability of this convergence with respect to perturbations satisfying \eqref{ci-ci}. Indeed, if we consider another set $E'$ satisfying the hypotheses of Theorem \ref{Perfdo}, \eqref{ci-ci} guarantees that $\psi$ is the same for $E$ and $E'$. 

We precede the proof of Proposition \ref{pro-min} by the representation of $F^\lambda_{\rm hom}$ as in the following lemma.

\begin{lemma}
There exists a measurable symmetric matrix $A_{\rm hom}:\mathbb R^d\to \mathbb M^{d\times d}$ satisfying  the boundedness and ellipticity conditions
\begin{equation}\label{ellip}
\frac1C |\xi|^2\le \langle A_{\rm hom}(x)\xi,\xi\rangle\le|\xi|^2\quad\hbox{for almost all $x\in\mathbb R^d$ and all $\xi\in\mathbb R^d$,}
\end{equation}
where $C$ is the constant in \eqref{extDu},
 such that the  $\Gamma$-limit of \eqref{quadFe} takes the form
\begin{equation}\label{limquad}
F_{\rm hom}^E(u)=\int_{\mathbb R^d}\langle A_{\rm hom}(x)\nabla u,\nabla u\rangle\dx
\end{equation}
for every $u\in H^1(\mathbb R^d)$.
\end{lemma}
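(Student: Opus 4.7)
The plan is to exploit the penalization Lemma \ref{lemmadomper}, which sandwiches $F^E_\e$ between the non-degenerate quadratic functionals $F^{E,n}_\e$, to transfer the integral representation from the $F^{E,n}_{\rm hom}$'s (which lie in the Sobolev setting covered by the compactness Theorem \ref{comp}) to $F^E_{\rm hom}$, and then take $n\to+\infty$.

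First I would apply Theorem \ref{comp} to each sequence $F^{E,n}_{\e_k}$, whose integrand $a^{E,n}(x/\e_k)|\xi|^2$ lies in $\FF_{1/n,1}$ with $p=2$, and use a standard diagonal extraction to produce a single subsequence (still denoted $\e_k$) along which $F^{E,n}_{\e_k}(\cdot,U)$ $\Gamma$-converges for every $n$ and every $U\in\AA$. Since the integrands are symmetric quadratic forms in $\xi$ and the problem is scalar, the $\Gamma$-limit integrands are again symmetric quadratic forms (this is classical, and in our case it can be read off from Theorem~\ref{equiv-G-Gamma} combined with the closedness of symmetric elliptic matrices under $G$-convergence). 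Hence, for each $n$, there is a measurable symmetric $A_n:\mathbb R^d\to \mathbb M^{d\times d}$ with $\frac1n I\le A_n(x)\le I$ such that $F^{E,n}_{\rm hom}(u,U)=\int_U \langle A_n\nabla u,\nabla u\rangle\dx$.

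Next I would pass to the limit in $n$. Because $a^{E,n}$ is pointwise decreasing in $n$, the functionals $F^{E,n}_\e$ and hence their $\Gamma$-limits are decreasing in $n$, so the matrices $A_n(x)$ decrease in the positive-semidefinite order and admit a pointwise limit $A_{\rm hom}(x)$ with $0\le A_{\rm hom}(x)\le I$. Combining the sandwich \eqref{qd}--\eqref{eqd} of Lemma~\ref{lemmadomper} with the assumed existence of $F^E_{\rm hom}$ gives
\begin{equation*}
F^E_{\rm hom}(u)\le \int_{\mathbb R^d}\langle A_n(x)\nabla u,\nabla u\rangle\dx\le \Bigl(1+\tfrac{C^2}{n}\Bigr)F^E_{\rm hom}(u)
\end{equation*}
for every $u\in H^1(\mathbb R^d)$ (Remark \ref{unb-comp} is used to take $U=\mathbb R^d$, noting that the quadratic integrands vanish at $\xi=0$). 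Dominated convergence applied to $\langle A_n\nabla u,\nabla u\rangle\le |\nabla u|^2\in L^1$ then yields the integral representation \eqref{limquad}.

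It remains to upgrade the trivial bounds $0\le A_{\rm hom}\le I$ to the sharp ellipticity \eqref{ellip}. The upper bound is immediate. For the lower bound, pick a recovery sequence $u_k\to u$ in $L^2_{\rm loc}$ with $F^E_{\e_k}(u_k)\to F^E_{\rm hom}(u)$; the extension $T_{\e_k}(u_k|_{\mathbb R^d\setminus\e_k E})$ belongs to $H^1(\mathbb R^d)$ with $\|\nabla T_{\e_k}(\cdot)\|_{L^2(\mathbb R^d)}^2\le C^2 F^E_{\e_k}(u_k)$ by \eqref{extDu}, and, as in the remark following Lemma~\ref{lemmadomper}, converges weakly in $H^1_{\rm loc}$ to $u$. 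Lower semicontinuity of the $L^2$-norm of the gradient then gives $\int_{\mathbb R^d}|\nabla u|^2\dx\le C^2 F^E_{\rm hom}(u)$, which, since $F^E_{\rm hom}(u)=\int\langle A_{\rm hom}\nabla u,\nabla u\rangle\dx$ for arbitrary $u\in H^1(\mathbb R^d)$, implies $A_{\rm hom}(x)\ge \frac{1}{C^2}I$ a.e. The main obstacle in this program is the quadratic-form structure in Step 2; it is standard, but the cleanest route seems to go through the $G$-convergence equivalence already recorded in Theorem~\ref{equiv-G-Gamma}.
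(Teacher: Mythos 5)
Your route to the integral representation \eqref{limquad} is genuinely different from the paper's and arguably more self-contained. The paper simply asserts that the representation and the upper bound in \eqref{ellip} follow ``as in \cite[Theorem 22.1]{DM}'' and only carries out the lower ellipticity bound in detail; you instead deduce the representation directly from Lemma~\ref{lemmadomper}, by writing each $F^{E,n}_{\rm hom}$ as a symmetric quadratic form via Theorem~\ref{comp} together with the $G$-convergence equivalence of Theorem~\ref{equiv-G-Gamma}, and then passing $n\to+\infty$ using the sandwich \eqref{qd}--\eqref{eqd}, the monotonicity of $A_n$, and dominated convergence. This ties the lemma to the penalization machinery that immediately precedes it and makes the external citation unnecessary; your use of Remark~\ref{unb-comp} to move to $U=\mathbb R^d$ is exactly the right device. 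For the lower ellipticity bound your idea coincides with the paper's: take a recovery sequence, extend it with $T_{\e_k}$, identify its weak $H^1_{\rm loc}$ limit with $u$ via the density condition, and combine lower semicontinuity of the $L^2$-norm of the gradient with \eqref{extDu}.

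There is, however, one genuine gap at the very end. From
$\int_{\mathbb R^d}|\nabla u|^2\dx\le C^2\int_{\mathbb R^d}\langle A_{\rm hom}\nabla u,\nabla u\rangle\dx$
for all $u\in H^1(\mathbb R^d)$ you cannot directly read off the pointwise matrix inequality $A_{\rm hom}(x)\ge\frac1{C^2}I$ a.e., because the admissible test fields $\nabla u$ are curl-free and so do not span $L^2(\mathbb R^d;\mathbb R^d)$. The paper closes this by rescaling $u(x)=w((x-x_0)/\eta)$ around a Lebesgue point $x_0$ of $A_{\rm hom}$, letting $\eta\to0$ to freeze the coefficient, and then invoking Parseval's identity to convert the frozen integral inequality into a pointwise inequality on the symbol. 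A similar localization is implicitly needed to justify that the $A_n$ decrease in the positive-semidefinite order; alternatively one can apply the derivation formula of Theorem~\ref{derit} to the integrands $\langle A_n\xi,\xi\rangle$. These are standard steps, but they must be spelled out. Finally, your constant $1/C^2$ is what \eqref{extDu} actually produces and is consistent with the $C^2$ in Lemma~\ref{lemmadomper}; the $1/C$ in the paper's \eqref{ellip} is an inessential notational slip.
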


\begin{proof}
The representation of \eqref{limquad} and the boundedness inequality in \eqref{ellip} can be proved as in 
\cite[Theorem 22.1]{DM}. In order to prove the ellipticity condition in \eqref{ellip}, we first show that 
\begin{equation}
\int_{\mathbb R^d}\langle A_{\rm hom}(x)\nabla u,\nabla u\rangle\dx\ge {1\over C} \int_{\mathbb R^d}|\nabla u|^2\dx
\end{equation}
for $u$ with compact support.
By the definition of $\Gamma$-convergence there exists a sequence $u_\e$ converging to $u$ in $L^2_{\rm loc}(\mathbb R^d)$ such that
$$
\lim_{\e\to 0}\int_{\mathbb R^d\setminus \e E}|\nabla u_\e|^2\dx= \int_{\mathbb R^d}
\langle A_{\rm hom}(x)\nabla u,\nabla u\rangle\dx.
$$
By a cut-off argument, multiplying by a function that is $1$ on the support of $u$, we can also assume that the functions $u_\e$ have support contained in a common compact set $K$, so that they strongly converge in $L^2(\mathbb R^d)$.
Let $R$ be large enough so that we have $|(B_R\setminus K)\setminus \e E|\ge c>0$ for all $\e$ small enough by \eqref{dens-E-s}. Since $T_\e(u_\e)=0$ on $(B_R\setminus K)\setminus \e E$ we can use Poincar\'e's inequality on $B_R\setminus K$ and, arguing as in the first part of the proof of Lemma \ref{lemmadomper}, deduce that  $T_\e(u_\e)$ converges to $0$ in $L^2(B_R\setminus K)$. 
Let now $\varphi\in C^\infty_c(\mathbb R^d)$ with $\varphi=1$ on $K$ and support in $\overline B_R$ and define $v_\e= \varphi T_\e(u_\e)$. Note that $v_\e\wto u$ in $H^1(\mathbb R^d)$, and by \eqref{extDu}
$$
\frac1C \int_{\mathbb R^d}|\nabla u|^2dx\le \liminf_{\e\to0}\frac1C \int_{\mathbb R^d}|\nabla v_\e|^2\dx\le \int_{\mathbb R^d}
\langle A_{\rm hom}(x)\nabla u,\nabla u\rangle\dx.
$$

Applying this last inequality to functions of the form $u(x)= w\big(\frac{x-x_0}\eta\big)$ and letting $\eta \to 0$ we deduce that
$$
\frac1C \int_{\mathbb R^d}|\nabla w|^2dx\le  \int_{\mathbb R^d}
\langle A_{\rm hom}(x_0)\nabla w,\nabla w\rangle\dx.
$$
for almost all $x_0$ and for all $w\in C^\infty_c(\mathbb R^d)$. A use of 
Parseval's identity then implies that 
$$
\frac1C \int_{\mathbb R^d}|\xi|^2|\widehat w|^2d\xi\le  \int_{\mathbb R^d}
\langle A_{\rm hom}(x_0)\xi,\xi\rangle|\widehat w|^2 d\xi.
$$
and  concludes the proof by the arbitrariness of $w$. 
\end{proof}

If $\mathbb R^d\setminus E$ is a connected Lipschitz $1$-periodic set, then  by  \cite{ACPDMP} $A_{\rm hom}$ is a constant matrix characterized by
$$
\langle A_{\rm hom}\xi,\xi\rangle=\min\Big\{ \int_{(0,1)^d\setminus E} |\xi+\nabla u|^2\dx: u\in H^1_{\rm loc}(\mathbb R^d) \hbox{ $1$-periodic}\Big\}.
$$

\begin{proof}[Proof of Proposition {\rm\ref{pro-min}}]
Let $u_\e$ be a sequence of minimizers for $F^\lambda_\e$, whose extensions $T_\e u_\e$ converge, up to subsequences, to some $\overline u$ in $L^2_{\rm loc} (\mathbb R^d)$.
Then we have 
\begin{eqnarray*}
&&\liminf_{\e\to0} F^\lambda_\e(u_\e)\\ &=&
\liminf_{\e\to0} \Bigl(\int_{\mathbb R^d\setminus \e E}
|\nabla u_\e|^2dx +\int_{\mathbb R^d} 1_{\mathbb R^d\setminus \e E}\Big(\sqrt\lambda u_\e- {1\over \sqrt\lambda} f\Big)^2dx-\int_{\mathbb R^d} 1_{\mathbb R^d\setminus \e E}{f^2\over \lambda}\dx\Bigr)\\
&\ge& \int_{\mathbb R^d}\langle A_{\rm hom}(x)\nabla\overline u,\nabla\overline  u\rangle\dx +\int_{\mathbb R^d} \psi \Big(\sqrt\lambda \overline u- {1\over \sqrt\lambda} f\Big)^2dx-\int_{\mathbb R^d}\psi {f^2\over \lambda}\dx
\\
&\ge& \int_{\mathbb R^d}\langle A_{\rm hom}(x)\nabla\overline u,\nabla\overline  u\rangle\dx+\int_{\mathbb R^d}\psi\big(\lambda \overline u^2-2 fu\big)\dx
\\
&\ge&\min\Big\{ \int_{\mathbb R^d}\langle A_{\rm hom}(x)\nabla u,\nabla u\rangle\dx +\int_{\mathbb R^d}\psi \big(\lambda u^2-2 fu\big)\dx: u\in H^1(\mathbb R^d)\Big\}
\end{eqnarray*}
by the $\Gamma$-convergence of $F^E_\e$ and the lower semicontinuity of the $L^2$ norm.
The converse inequality is obtained by noting that if we consider $u$ with compact support, up to a cut-off argument with functions that are $1$ on its support, we can find recovery sequences $u_\e$ for $F^E_{\rm hom}$ strongly converging in $L^2(\mathbb R^d)$, for which the additional terms pass to the limit. In conclusion, the minimizers for $F^\lambda_\e$ converge to the minimizer of
$F^\lambda_{\rm hom}$.\end{proof}

\noindent \textsc{Acknowledgements.}
 This article is based on work supported by the National Research Project PRIN 2022J4FYNJ  ``Variational methods for stationary and evolution problems with singularities and interfaces"
 funded by the Italian Ministry of University and Research. 
The first two authors are members of GNAMPA of INdAM.
The third author thanks SISSA for its hospitality. The authors thank the anonymous referees of {\em Annales de l'Institut Henri Poincar\'e C} for their many constructive comments that helped improving the text and also for pointing out some mathematical flaws in an earlier version of the manuscript.

 \bibliographystyle{abbrv}
\bibliography{Bra-DM-LB-2023}
 
\end{document}